\title[On the question of genericity of hyperbolic knots]{On the question of genericity \\of hyperbolic knots}
\author{Andrei Malyutin}
\address{%
St.\,Petersburg Department of 
Steklov Institute of Mathematics
}
\email{malyutin@pdmi.ras.ru}
\def \mspace {\!}
\def \crn {\operatorname{cr}}
\def \clos {\operatorname{clos}}
\def \card {\operatorname{card}}
\def \dist {\operatorname{dist}}
\def \span {\operatorname{span}}
\newcommand \sm {\setminus}
\newcommand \R      {\mathbb R}
\newcommand \dd      {\partial}
\newcommand \inr      {\operatorname{int}}
\newcommand \bn      {\operatorname{b}}
\newcommand \be     {\begin{equation}}
\newcommand \ee     {\end{equation}}
\newtheorem{thm}{Theorem}
\newtheorem{prop}{Proposition}
\newtheorem{lem}{Lemma}
\newtheorem{claim}{Claim}
\newtheorem{conj}{Conjecture}
\newtheorem{cor}{Corollary}
\begin{document}
\maketitle

\begin{abstract}
A well-known conjecture in knot theory says that the percentage of hyperbolic knots amongst all of the prime knots of~$n$ or fewer crossings approaches $100$ as $n$ approaches infinity.
In this paper, it is proved that this conjecture contradicts several other plausible conjectures, including the 120-year-old conjecture
on additivity of the crossing number of knots under connected sum and the conjecture that the crossing number of a satellite knot is not less than that of its companion.
\end{abstract} 

\maketitle

\section{Introduction}

William Thurston proved in 1978 that every non-torus non-satellite knot is a hyperbolic knot.
Computations show that the overwhelming majority of prime knots with small crossing number are hyperbolic knots.
The following table gives the number of hyperbolic, prime satellite, and torus knots of $n$ crossings for $n=3,\dots,16$
(see~\cite{HTW98} or the sequences A002863, A052408, A051765, and A051764 in the Sloane's encyclopedia of integer sequences).

\footnotesize
\begin{table}[ht]
	\begin{center}
		\begin{tabular}{ |c|c|c|c|c|c|c|c|c|c|c|c|c|c|c|c|c|c|c|c|c|c| } 
			 \hline
			 type~~$\backslash$~~\vphantom{$L^{L^L}$}$n=$  &  3 & 4 & 5 & 6 & 7 & 8 & 9 & 10 & 11 & 12 & 13 & 14 & 15 & 16 \\ 
			 \hline
			  all prime \vphantom{$L^{L^L}$} & 1 & 1 & 2 & 3 & 7 & \mspace21\mspace & \mspace49\mspace & \mspace165\mspace & \mspace552\mspace &  \mspace2\,176\mspace &  \mspace9\,988\mspace & \mspace46\,972\mspace & \mspace253\,293\mspace & \mspace1\,388\,705 \\ 
			 hyperbolic & 0 & 1 & 1 & 3 & 6 & \mspace20\mspace & \mspace48\mspace & \mspace164\mspace & \mspace551\mspace &  \mspace2\,176\mspace &  \mspace9\,985\mspace & \mspace46\,969\mspace & \mspace253\,285\mspace & \mspace1\,388\,694 \\ 
			 prime satellite &  0 & 0 & 0 & 0 & 0 & 0 & 0 & 0 & 0 & 0 & 2 & 2 & 6 & 10 \\
			 torus     &  1 & 0 & 1 & 0 & 1 & 1 & 1 & 1 & 1 & 0 & 1 & 1 & 2 & 1 \\ 
			 \hline
		\end{tabular}
		\vspace{0pt}
		\caption{Number of prime knots}
		\label{tbl:table}
	\end{center}
\end{table}
\normalsize
\vspace{-15pt}

(A~part of) these data gave rise to the following conjecture (see~\cite[p.\,119]{Ad94b}).

\begin{conj}\label{H1}
The percentage of hyperbolic knots amongst all of the prime knots of~$n$ or fewer crossings approaches $100$ as $n$ approaches infinity.
\end{conj}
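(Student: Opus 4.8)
Write $P(n)$, $H(n)$, $T(n)$, $S(n)$ for the numbers of prime knots with at most $n$ crossings that are, respectively, of any type, hyperbolic, torus, and (prime) satellite. Thurston's theorem quoted above, together with the geometric decomposition of knot complements, gives a disjoint trichotomy: every prime knot is exactly one of hyperbolic, torus, or satellite. Hence $P(n)=H(n)+T(n)+S(n)$, and Conjecture~\ref{H1} is equivalent to the single asymptotic statement
\[
\frac{T(n)+S(n)}{P(n)}\xrightarrow[n\to\infty]{}0.
\]
The plan is to bound the numerator above by a subexponential quantity and the denominator below by an exponential one.

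For the denominator I would use the classical fact that prime knots proliferate exponentially: there are constants $c,D>1$ with $c^{\,n}\le P(n)\le D^{\,n}$ for all large $n$ (the lower bound goes back to Ernst and Sumners and already follows from counting reduced alternating diagrams; the upper bound follows from bounding the number of knot diagrams). The torus term is then immediately negligible: the torus knot $T(p,q)$ with $2\le p<q$ and $\gcd(p,q)=1$ has crossing number $\min\{p(q-1),q(p-1)\}=q(p-1)$, so $q(p-1)\le n$ forces $p,q\le n$; there are at most $n^{2}$ such pairs, whence $T(n)\le n^{2}=o(c^{\,n})$ and $T(n)/P(n)\to0$. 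This disposes of the torus part unconditionally and reduces the whole conjecture to showing $S(n)/P(n)\to0$.

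All the difficulty is now concentrated in the satellite term, which I would attack by a parameter count. A prime satellite knot $K$ is assembled from a nontrivial companion $C$, a geometrically essential pattern in the standard solid torus, and a framing, so that $S(n)\le\sum_{C}N(C,n)$, where $N(C,n)$ counts the prime satellites with companion $C$ and at most $n$ crossings. To conclude $S(n)=o(P(n))$ it would suffice to prove a quantitative ``satellites are expensive'' estimate: a function $g$ with $g(n)=o(n)$ such that every prime satellite with at most $n$ crossings has a companion with $\crn(C)\le g(n)$, together with a subexponential bound $N(C,n)\le e^{o(n)}$ on the number of admissible patterns. Granting both, the companions would range over a set of size at most $P(g(n))\le D^{\,g(n)}=e^{o(n)}$, and summing would give $S(n)\le e^{o(n)}=o(c^{\,n})$, whence $S(n)/P(n)\to0$ and the proof would be complete.

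I expect this satellite step to be the decisive obstacle, and not merely a technical one. To force $\crn(C)\le g(n)=o(n)$ one needs a strictly super-additive crossing-number inequality, pushing the crossing number of a satellite far above that of its companion. What is actually plausible is the opposite-direction bound $\crn(K)\ge\crn(C)$, and this controls the companion only by $\crn(C)\le\crn(K)\le n$, leaving room for companions of crossing number comparable to $n$ and hence for as many as $P(n)$ of them. The pattern side fares no better: the number of essential patterns of crossing number at most $n$ is itself exponential in $n$, so the required bound $N(C,n)\le e^{o(n)}$ is already false in the form stated. In short, both halves of the parameter count break in the same direction, toward \emph{more} satellites rather than fewer; as the remainder of the paper shows, the standard crossing-number conjectures in fact predict that prime satellites are produced abundantly enough to keep $S(n)/P(n)$ bounded away from $0$. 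This is precisely where the strategy stalls, and it is the heart of why the statement, far from being provable along these lines, stands in tension with other widely believed conjectures.
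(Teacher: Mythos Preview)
The statement you were asked to prove is a \emph{conjecture}, not a theorem, and the paper does not prove it. On the contrary, the paper's main result (Theorem~\ref{T1}) is that Conjecture~\ref{H1} is \emph{incompatible} with each of Conjectures~\ref{con:add}--\ref{conj-weak}. There is therefore no ``paper's own proof'' to compare against.

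Your write-up is not a proof either, and you acknowledge this explicitly in the final paragraph. The torus count $T(n)\le n^{2}$ is fine and indeed matches the paper's remark (via Murasugi's crossing-number formula) that torus knots are negligible. But the satellite step is not a gap to be filled later; it is the entire content of the problem, and your own analysis shows why the parameter-count strategy cannot work. You need $\crn(C)=o(n)$ for companions of satellites with $\crn\le n$, yet the best one expects is $\crn(C)\le\crn(K)$ (Conjecture~\ref{con:sat}), which gives only $\crn(C)\le n$. Worse, the paper's Proposition~\ref{lem:main} shows that under Conjecture~\ref{conj-weak} one can manufacture a distinct prime satellite for each $\frac{2}{3}$-regular hyperbolic knot at a cost of only $17$ extra crossings, so $S_{n+17}\ge\varepsilon_{0}H_{n}$; this is exactly the ``abundant production of satellites'' you anticipate, and it drives the paper's contradiction.

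In short: you have correctly diagnosed why a direct attack fails, but the proposal should not be framed as a proof of Conjecture~\ref{H1}. The honest conclusion is that the conjecture is open, and the paper supplies evidence \emph{against} it conditional on other standard conjectures.
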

 
In the present paper, we show that Conjecture~\ref{H1} contradicts several other long standing conjectures, including the following one.

\begin{conj}\label{con:add}
The crossing number of knots is additive with respect to connected sum.
\end{conj}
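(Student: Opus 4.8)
The plan is to establish the nontrivial inequality $\crn(K_1\#K_2)\ge\crn(K_1)+\crn(K_2)$; the reverse is immediate, since splicing together minimal diagrams of $K_1$ and $K_2$ yields a diagram of the connected sum with exactly $\crn(K_1)+\crn(K_2)$ crossings. The whole difficulty lies in the lower bound, and I would attack it through an invariant $\iota$ that is \emph{additive under connected sum} yet \emph{bounded above by the crossing number}: any $\iota$ with $\iota(K)\le\crn(K)$ and $\iota(K_1\#K_2)=\iota(K_1)+\iota(K_2)$ forces $\crn(K_1\#K_2)\ge\iota(K_1)+\iota(K_2)$, which is sharp exactly when $\iota$ equals $\crn$ on the two summands.

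The first and most classical candidate is the span of the Jones polynomial. Two facts drive it. First, the Jones polynomial is multiplicative under connected sum, $V_{K_1\#K_2}(t)=V_{K_1}(t)\,V_{K_2}(t)$, so its span is additive. Second, by the Kauffman--Murasugi--Thistlethwaite estimate one has $\operatorname{span}V_K\le\crn(K)$ for every knot. Together these give
\be
\crn(K_1\#K_2)\ \ge\ \operatorname{span}V_{K_1\#K_2}\ =\ \operatorname{span}V_{K_1}+\operatorname{span}V_{K_2}.
\ee
On the alternating --- and more generally the adequate --- class the Kauffman--Murasugi--Thistlethwaite bound is an equality, $\operatorname{span}V_{K_i}=\crn(K_i)$, so the displayed inequality already yields full additivity there. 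This settles a large, natural family of knots and pins down exactly where the obstruction sits.

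The hard part is the \emph{non-adequate} case, where $\operatorname{span}V_K<\crn(K)$ strictly and the bound above becomes too lossy. To recover the missing crossings I would search for a sharper additive certificate. One candidate is the homological width of Khovanov homology, which refines the Jones polynomial, is additive under connected sum, and is still bounded by the crossing number, yet can exceed the Jones span on thick (non-thin) knots; another is the breadth of the HOMFLY polynomial, which is again multiplicative under connected sum and bounded by the crossing number through Morton's inequalities. Lacking a polynomial or homological certificate, the alternative is a direct geometric argument: fix a minimal diagram of $K_1\#K_2$, locate a decomposing sphere meeting the projection in a controlled way, and show that the two sides must separately carry $\crn(K_1)$ and $\crn(K_2)$ crossings. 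The known linear lower bounds for the crossing number of a connected sum realize exactly this idea, but so far only with a multiplicative constant strictly below $1$.

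I expect the geometric step to be the decisive obstacle, and it is in essence the content of the conjecture itself. The trouble is that a minimal diagram of $K_1\#K_2$ need not be a splice of minimal diagrams of the summands: a decomposing sphere may cross the projection in many arcs, and one cannot \emph{a priori} rule out that some clever recombination of the two sides produces a diagram cheaper than the naive one. Excluding such hidden savings --- equivalently, producing an additive complexity measure that coincides with $\crn$ on \emph{all} knots --- is the crux that has resisted attack for over a century.
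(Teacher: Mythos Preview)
The statement you are addressing is a \emph{conjecture}, not a theorem: the paper does not prove it, and indeed treats it as open. The paper's contribution is to show that Conjecture~\ref{con:add} (together with several weaker statements) is \emph{incompatible} with Conjecture~\ref{H1} on the genericity of hyperbolic knots. So there is no ``paper's own proof'' to compare against.

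Your write-up is not a proof either, and to your credit you say so: the final paragraph concedes that the decisive geometric step ``is in essence the content of the conjecture itself'' and ``has resisted attack for over a century''. What you have produced is a survey of partial approaches --- the Jones-span argument for alternating/adequate knots, the hope of sharper additive certificates from Khovanov width or HOMFLY breadth, and the direct diagrammatic attack that so far yields only Lackenby's linear bound with constant $1/152$. All of this is accurate context, but none of it closes the gap for general knots, and you correctly identify why: no known additive invariant coincides with $\crn$ on all knots, and a minimal diagram of $K_1\#K_2$ need not split along a decomposing sphere into minimal diagrams of the summands. Since the conjecture is open, a complete proof is not expected; if the task was to prove the statement, the honest answer is that it cannot be done with current tools.
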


See, e.\,g.,~\cite[p.\,69]{Ad94b}, \cite[Problem 1.65]{Kir97}, and~\cite{La09} for comments and related results. 
Another related conjecture is as follows.

\begin{conj}\label{con:sat}
The crossing number of a satellite knot is bigger (a weaker variant: not less) than that of its companion.
\end{conj}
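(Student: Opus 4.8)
The plan is to attack the (weaker, non-strict) form directly: given a satellite knot $K$ with companion $C$, I would produce from a minimal-crossing diagram of $K$ a diagram of $C$ with no more crossings, yielding $\crn(C)\le\crn(K)$. First I would fix the satellite data: an essential torus $T\subset S^3\sm K$ bounding a companion solid torus $V$ with $K\subset\inr V$, the core of $V$ being isotopic to $C$, together with the wrapping number $\omega\ge1$ and winding number $w$ of the pattern $K\subset V$. Take a diagram $D$ of $K$ on $S^2$ with exactly $\crn(K)$ crossings. The object I want to build is a planar diagram of $C$ whose crossings can be injected into the crossings of $D$.

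The geometric heart of the argument is to realize the companion torus compatibly with $D$ and then collapse $V$ radially onto its core $C$. Concretely, I would isotope $T$ into a controlled (normal, thin) position with respect to the projection and the height structure underlying $D$, so that $T$ meets the projection plane and the vertical lines through crossings transversally and with minimal intersection; this should exhibit the annular neighbourhood of $C$ carrying $K$ as a thickening of a planar picture. Collapsing $V$ onto $C$ then pushes $K$ onto $C$ and carries $D$ to a (possibly singular) diagram $D'$ of $C$. The point to establish is that this collapse is crossing-non-increasing: since $\omega\ge1$, every arc of the collapsed picture is covered by an arc of $K$, so each genuine crossing of $C$ in $D'$ is witnessed by at least one crossing of $D$, giving
\begin{equation}
\crn(C)\ \le\ \#\{\text{crossings of }D'\}\ \le\ \#\{\text{crossings of }D\}\ =\ \crn(K).
\end{equation}
For the strict variant I would argue that equality forces the collapse to lose no crossings \emph{and} the pattern to be as simple as the core (no self-crossings, $\omega=1$, pattern isotopic to the core), contradicting the hypothesis that $K$ is a genuine satellite whose pattern is neither contained in a ball nor isotopic to the core; hence the inequality is strict.

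The hard part, and the reason the statement is only conjectural, is precisely the claim that the collapse $V\to C$ creates no crossings. A knotted solid torus projects to the plane in a complicated way, and its core by itself already requires at least $\crn(C)$ crossings; the difficulty is to guarantee that these unavoidable core crossings are charged to \emph{distinct} crossings already present in the minimal diagram of $K$, rather than to crossings manufactured by the projection of $T$ itself. Present technology controls this only up to a universal multiplicative constant — one can prove $\crn(K)\ge c\cdot\crn(C)$ for a fixed $c\in(0,1)$ using incompressible-surface and hyperbolic-volume estimates across $T$ — and the essential obstacle is to sharpen such an estimate to the optimal constant $c=1$. I would therefore expect the main effort to go into a combinatorial normal-form theorem for $T$ relative to a minimal diagram that makes the collapse provably crossing-non-increasing; absent that theorem, the chain of inequalities above remains a heuristic for why the conjecture ought to hold rather than a proof.
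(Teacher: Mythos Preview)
The statement you are attempting to prove is labelled Conjecture~\ref{con:sat} in the paper, and the paper does \emph{not} prove it: it is a well-known open problem (see \cite[Problem~1.67]{Kir97} and \cite{La14}). The paper's only use of this statement is as a hypothesis in Theorem~\ref{T1}, where it is shown that Conjecture~\ref{con:sat} (via the chain of implications through Conjectures~\ref{con:com}, \ref{con:2/3}, and~\ref{conj-weak}) is \emph{incompatible} with Conjecture~\ref{H1} on the genericity of hyperbolic knots. There is no proof of Conjecture~\ref{con:sat} in the paper to compare your argument against.

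As for the argument itself, you have correctly located the gap in your own final paragraph: the assertion that the radial collapse $V\to C$ can be arranged to be crossing-non-increasing is precisely the open content of the conjecture, and you supply no mechanism to secure it. There is no known normal-form theorem placing the companion torus $T$ in a position relative to a minimal diagram of $K$ so that every crossing of the resulting diagram of $C$ is witnessed by a distinct crossing of~$D$. The best rigorous results in this direction (Lackenby \cite{La09}, \cite{La14}) yield only $\crn(K)\ge c\cdot\crn(C)$ for an explicit constant $c\ll 1$, and closing the gap to $c=1$ is exactly what remains unproved. Your write-up is therefore an informed heuristic for why the conjecture is plausible, not a proof; indeed, you say as much in your last sentence.
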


See~\cite[p.\,118]{Ad94b}, \cite[Problem 1.67 (attributed to de Souza)]{Kir97}, and~\cite{La14}.
It is remarked in \cite[Problem 1.67]{Kir97} concerning Conjecture~\ref{con:sat} that `Surely the answer is yes, so the problem indicates the difficulties of proving statements about the crossing number'.
Since a composite knot is a connected sum of its factors and, at the same time, 
is a satellite of each of its factors, 
the `intersection' of Conjectures~\ref{con:add} and~\ref{con:sat} yields the following.

\begin{conj}\label{con:com}
The crossing number of a composite knot is bigger (a weaker variant: not less) than that of each of its factors.
\end{conj}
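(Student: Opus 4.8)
The plan is to derive Conjecture~\ref{con:com} from each of Conjectures~\ref{con:add} and~\ref{con:sat} separately, so that it appears as their common consequence (the ``intersection'' alluded to above) and is refuted the moment either of the two stronger statements fails. Throughout, a composite knot is understood to be a connected sum $K = K_1 \# \cdots \# K_m$ with $m \geq 2$ and every factor $K_i$ nontrivial; recall that a nontrivial knot has crossing number at least~$3$ (realized by the trefoil).

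First I would run the argument through additivity. Assuming Conjecture~\ref{con:add}, one has $\crn(K) = \sum_{i=1}^m \crn(K_i)$. Fixing a factor $K_j$ and isolating it gives
\be
\crn(K) - \crn(K_j) = \sum_{i \neq j} \crn(K_i) \geq 3 > 0,
\ee
since at least one other factor is present and each such factor is nontrivial. Hence $\crn(K) > \crn(K_j)$ for every~$j$, which is the strong form of Conjecture~\ref{con:com}. This route is essentially immediate and carries no geometric content.

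Next I would run the argument through the satellite conjecture. The key fact is that $K$ is a satellite of each of its factors: taking $K_j$ as companion, one realizes $K$ as the image of a winding-number-one pattern---the core of the standard solid torus with the remaining connected sum $K_1 \# \cdots \# \widehat{K_j} \# \cdots \# K_m$ tied into it as a local knot---under an embedding of that solid torus along a tubular neighborhood of $K_j$. I would then check that this is a genuine, nontrivial satellite: the companion $K_j$ is nontrivial, so the companion torus is knotted; and because the omitted factors are nontrivial, the pattern lies in no ball inside the torus and is not isotopic to the core. With this in hand, Conjecture~\ref{con:sat} applies directly and yields $\crn(K) > \crn(K_j)$ (or $\geq$ in the weak variant) for each~$j$.

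The hard part is the satellite identification in the previous paragraph: one must verify, from the precise definition of a satellite knot in terms of an essential companion torus and an essential pattern, that a connected sum with a nontrivial factor taken as companion really is a nontrivial satellite of that factor. This is classical---indeed it is a standard motivating example of a satellite (see, e.g.,~\cite[p.\,118]{Ad94b})---so I would cite it rather than reprove the incompressibility and essentiality conditions. Since Conjecture~\ref{con:com} thereby follows from each of the two stronger conjectures, it holds under their conjunction and is the weakest statement common to both; this is precisely the leverage needed later, as contradicting Conjecture~\ref{con:com} simultaneously contradicts Conjectures~\ref{con:add} and~\ref{con:sat}.
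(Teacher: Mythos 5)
Your proposal is correct and takes essentially the same route as the paper: the statement is a conjecture, and the paper's only ``proof'' is the remark that a composite knot is a connected sum of its factors and simultaneously a satellite of each factor, so that each of Conjectures~\ref{con:add} and~\ref{con:sat} implies Conjecture~\ref{con:com} (these are exactly the two implications in the paper's diagram, which it calls obvious). You merely make explicit the additivity bookkeeping and the satellite verification (nontrivial companion, winding-number-one pattern lying in no ball and not isotopic to the core) that the paper leaves unstated.
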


Let us denote by $\crn(X)$ the crossing number of a knot~$X$.
If $P$ is a prime knot and $\lambda$ is a real number,
we say that $P$ is \emph{$\lambda$-regular} if we have $\crn(K)\ge \lambda\cdot\crn(P)$ 
whenever $P$ is a factor of a knot $K$.
In this terminology, Conjecture~\ref{con:com} says that each prime knot is $1$-regular.
Lackenby~\cite{La09} proved that each knot is $\frac1{152}$-regular.
Our considerations involve the following conjecture.

\begin{conj}\label{con:2/3}
Each prime knot is $\frac23$-regular.
\end{conj}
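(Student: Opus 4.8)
To establish that every prime knot $P$ is $\frac23$-regular amounts to showing that $\crn(K)\ge\frac23\,\crn(P)$, equivalently $\crn(P)\le\frac32\,\crn(K)$, for every knot $K$ having $P$ as a connected-sum factor. This is a strengthening of Lackenby's theorem \cite{La09}, quoted above, that every knot is $\frac1{152}$-regular; since the trivial factorization $K=P$ already gives the inequality, the interesting case is $K$ properly composite. The natural plan is therefore not to invent a new lower-bound method but to start from Lackenby's normal-surface machinery and to push the constant from $\frac1{152}$ up to $\frac23$.

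First I would fix a minimal-crossing diagram $D$ of $K$, with $\crn(K)$ crossings, and recall that a factorization $K=P\# K'$ is recorded by a decomposing $2$-sphere $\Sigma\subset S^3$ meeting $K$ transversally in two points. In the exterior $X=S^3\sm N(K)$ the sphere $\Sigma$ becomes an essential annulus $A=\Sigma\cap X$ with meridional boundary on $\dd N(K)$, and the plan is to isotope $A$ to be least-weight (normal) with respect to the ideal cell structure that $D$ induces on $X$, whose number of cells is linear in $\crn(K)$. Normality forces the combinatorics of $A$ to be controlled by the crossings of $D$, and from the component on the $P$-side of $A$ one should be able to read off a diagram of $P$ whose crossing number is bounded by the number of crossings of $D$ lying on that side, plus a controlled overhead for the crossings created where $A$ is cut and capped off. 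Estimating this overhead is what should produce the target inequality $\crn(P)\le\frac32\,\crn(K)$.

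The quantitative heart is precisely this passage from ``crossings on the $P$-side of a normalized $A$'' to an honest diagram of $P$, and it is here that Lackenby's $\frac1{152}$ is produced: that constant is a \emph{product} of several separately proved estimates—a loss from normalizing the surface, a loss from bounding the number of admissible normal-disk types, and a loss from the parallelity bundle that collects together isotopic normal pieces. I expect the main obstacle to be exactly this compounding. Each individual step is close to optimal in isolation, so no amount of local tightening will reach $\frac23$; what is needed is to replace the multiplicative combination of these bounds by a single global or additive estimate, and finding such an estimate appears to require a genuinely new idea.

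A complementary line that I would pursue in parallel is diagrammatic. For alternating, and more generally for adequate, knots the crossing number is strictly additive under connected sum via the span of the Kauffman bracket, so on these classes $1$-regularity—hence $\frac23$-regularity—already holds; the hope would be to interpolate from the adequate case to the general case by a controlled sequence of crossing changes, measuring the semi-adequacy defect against the slack $\frac32\,\crn(K)-\crn(P)$ one is allowed. The difficulty here, and the reason the conjecture remains open, is that a minimal diagram of $K$ need not respect the connected-sum structure at all: the crossings ``belonging to'' the factor $P$ may be delocalized across the whole of $D$, so there is no obvious local region of $D$ to which the adequate-knot estimates can be applied.
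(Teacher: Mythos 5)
The statement you set out to prove is Conjecture~\ref{con:2/3} of the paper, and the paper contains no proof of it: it is introduced expressly as an open hypothesis, and the paper's main result, Theorem~\ref{T1}, proves that it is \emph{incompatible} with Conjecture~\ref{H1} (the genericity of hyperbolic knots). Concretely, by Proposition~\ref{lem:main} every $\frac23$-regular prime knot $P$ admits a prime satellite $\gamma$-knot $P'$ with $\crn(P')\le\crn(P)+17$, so an abundance of $\frac23$-regular knots forces $S_{n+17}\ge\varepsilon_0 H_n$ and, assuming Conjecture~\ref{H1}, superexponential growth of the number of prime knots, contradicting Welsh's exponential upper bound. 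Hence a complete proof along the lines you sketch would not merely sharpen Lackenby's constant $\frac1{152}$ to $\frac23$; combined with the paper's Theorem~\ref{T1} it would refute hyperbolic genericity outright. There is therefore no ``paper's proof'' to compare your attempt against, and the paper's results should make you suspicious that the statement may simply be false: the author's framework even shows (via Conjecture~\ref{conj-weak}) that your method succeeding on any fixed positive proportion of hyperbolic knots would already contradict Conjecture~\ref{H1}.

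Judged on its own terms, your text is a research program, not a proof, and both of its branches end at an acknowledged missing idea. In the normal-surface branch you correctly diagnose that Lackenby's $\frac1{152}$ arises as a product of separate losses (normalization, bounding the normal disk types, the parallelity bundle) and then state that replacing this multiplicative combination by a single additive estimate ``appears to require a genuinely new idea''---but that replacement \emph{is} the entire problem, and you supply no candidate for it; nothing in your outline rules out that the true optimal constant obtainable from this machinery stays far below $\frac23$. In the diagrammatic branch, the proposed interpolation from adequate knots by crossing changes has no mechanism at all: a single crossing change can alter the crossing number in an uncontrolled way, the semi-adequacy ``defect'' you invoke is not a quantity with any known relation to $\frac32\crn(K)-\crn(P)$, and, as you yourself note, the crossings contributed by the factor $P$ may be delocalized across a minimal diagram of $K$, so there is no local region to which the Kauffman-bracket span argument applies. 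In short, the proposal names the obstructions accurately but resolves none of them, and it aims at a statement the paper presents as conjectural and as contradicting a standard conjecture rather than as a theorem.
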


We also consider the following weakening of Conjecture~\ref{con:2/3}.

\begin{conj}\label{conj-weak}
There exist $\varepsilon>0$ and $N>0$ such that, for all $n>N$, the percentage of $\frac23$-regular knots amongst all of the hyperbolic knots of~$n$ or fewer crossings is at least~$\varepsilon$.
%
%
\end{conj}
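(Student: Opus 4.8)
The plan is to exhibit, for a definite proportion of hyperbolic knots $P$, an explicit \emph{certificate} of $\frac23$-regularity coming from a lower bound for the crossing number that behaves well under connected sum. The elementary engine is the following observation. Suppose $\nu$ is a knot invariant that is (i) additive under connected sum, $\nu(K_1\#K_2)=\nu(K_1)+\nu(K_2)$, (ii) nonnegative, and (iii) everywhere dominated by the crossing number, $\nu(K)\le\crn(K)$. If a prime knot $P$ satisfies $\nu(P)\ge\lambda\,\crn(P)$, then $P$ is $\lambda$-regular: writing an arbitrary knot $K$ that has $P$ as a factor as $K=P\#Q$ (with $Q$ the connected sum of the remaining factors, possibly trivial), we get
\[
\crn(K)\ \ge\ \nu(K)\ =\ \nu(P)+\nu(Q)\ \ge\ \nu(P)\ \ge\ \lambda\,\crn(P).
\]
Thus the problem reduces to finding such a $\nu$ together with many hyperbolic knots on which $\nu$ recovers at least two thirds of the crossing number.

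The natural candidate is the span of the Jones polynomial, $\nu(P):=\span V_P$. It is additive under connected sum, since $V_{K_1\#K_2}=V_{K_1}\cdot V_{K_2}$ and the span of a product of Laurent polynomials is the sum of the spans; it is nonnegative; and the Kauffman--Murasugi--Thistlethwaite inequality gives $\span V_K\le\crn(K)$, with equality for alternating knots. Consequently every prime knot with $\span V_P\ge\frac23\,\crn(P)$ is $\frac23$-regular, and in particular every prime alternating knot is even $1$-regular (a special case of Conjecture~\ref{con:com}). Since, by Menasco's theorem, every prime alternating knot other than a $(2,n)$-torus knot is hyperbolic, Conjecture~\ref{conj-weak} would follow from a positive lower bound, uniform for large~$n$, on the proportion of hyperbolic knots $P$ with $\crn(P)\le n$ that satisfy $\span V_P\ge\frac23\,\crn(P)$.

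The hard part is precisely this last, enumerative step, and it is where I expect the real obstacle to lie. We have essentially no rigorous handle on the joint distribution of $\span V_P$ and $\crn(P)$ across all hyperbolic knots ordered by crossing number, because even counting prime (or hyperbolic) knots by crossing number, let alone tracking their quantum invariants, is open. Restricting to the alternating subclass, which satisfies the span condition automatically, is tempting but probably insufficient: the number of prime alternating knots is believed to grow with a strictly smaller exponential rate than the number of all prime knots, so their proportion should tend to $0$ rather than stay bounded below. To succeed one would need to capture a genuinely fat family---for instance adequate knots, for which the span again equals the crossing number---and prove that it constitutes a definite fraction of all hyperbolic knots. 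One could alternatively replace $\span V$ by the simplicial volume (Gromov norm) of the complement, which is likewise additive under connected sum and, via the linear bound $\operatorname{vol}\le C\,\crn$ for hyperbolic knots, dominated by a multiple of the crossing number; but this merely reroutes the difficulty into showing that a positive proportion of knots have volume close to the maximum allowed by their crossing number. Every natural certificate thus funnels into the same generic-proportion statement about knots, which is beyond present techniques.
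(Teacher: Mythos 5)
You have set out to prove something the paper never proves: the statement is Conjecture~\ref{conj-weak}, stated as an open conjecture, and the whole point of the paper is Theorem~\ref{T1}, which shows that this conjecture \emph{contradicts} Conjecture~\ref{H1} on the genericity of hyperbolic knots. So any actual proof of Conjecture~\ref{conj-weak} would simultaneously refute the widely believed hyperbolicity-genericity conjecture; there is no proof in the paper to compare against, and none should be expected by elementary means. Within your proposal, the certificate step is correct and is in fact exactly the paper's own footnote argument in Section~\ref{sec:rems}: for an invariant $\nu$ with $\nu(K_1\sharp K_2)=\nu(K_1)+\nu(K_2)$ and $\nu\le\crn$, any prime $P$ with $\nu(P)\ge\lambda\,\crn(P)$ is $\lambda$-regular, and with $\nu=\span V$ this yields $1$-regularity of alternating knots (Murasugi, Kauffman, Thistlethwaite). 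One small point of care: your factorization $K=P\sharp Q$ uses unique factorization (Schubert), which is fine, but the inequality chain otherwise goes through verbatim.

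The genuine gap is the one you yourself name: the enumerative step, i.e.\ showing that knots with $\span V_P\ge\frac23\crn(P)$ (or any other certified family) occupy a proportion bounded below among hyperbolic knots of at most $n$ crossings. Nothing in the literature controls the joint distribution of $\span V$ and $\crn$ over all hyperbolic knots ordered by crossing number; even the growth rate of $P_n$ itself is only known up to exponential bounds (\cite{ES87,Wel92}, quoted in the paper). Your fallback families do not close it: alternating knots are expected (via Sundberg--Thistlethwaite growth estimates) to be exponentially scarce, the paper itself notes that $2$-bridge knots have vanishing proportion by~\cite{Wel92}, and replacing $\span V$ by simplicial volume reroutes into the equally inaccessible claim that a definite fraction of knots have near-maximal volume for their crossing number. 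So the proposal is an honest reduction plus a correctly identified obstruction, not a proof --- and given Theorem~\ref{T1}, that obstruction is not merely technical: closing it would disprove Conjecture~\ref{H1}, so the conjecture should be regarded as genuinely open (and, conditionally on Conjecture~\ref{H1}, false).
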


We have the following obvious implications.
\begin{center}
\begin{tikzpicture} 
\draw (0,0) node[anchor=west]{Conj.~4 $~\Longrightarrow~$ Conj.~5 $~\Longrightarrow~$ Conj.~6};
\draw (120:1.2cm) node [left]{Conj.~2};
\draw (120:0.6cm) node [rotate=-45]{$\Longrightarrow$};
\draw (-120:1.2cm) node [left]{Conj.~3};
\draw (-120:0.6cm) node [rotate=45]{$\Longrightarrow$};
\end{tikzpicture} 
\end{center}

The main result of this paper is the following theorem.

\begin{thm}\label{T1}
Conjecture~\ref{H1} contradicts (each of) Conjectures
\ref{con:add},
\ref{con:sat}, 
\ref{con:com},
\ref{con:2/3},  and
\ref{conj-weak}.
\end{thm}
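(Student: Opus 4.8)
My plan is to isolate a single incompatibility and reduce the whole theorem to it. By the implications displayed just before the theorem, each of Conjectures~\ref{con:add}, \ref{con:sat}, \ref{con:com}, and~\ref{con:2/3} implies Conjecture~\ref{conj-weak}, so~\ref{conj-weak} is the weakest of the five. Hence it is enough to prove that Conjecture~\ref{H1} contradicts Conjecture~\ref{conj-weak}: if a conjecture $X$ implies Conjecture~\ref{conj-weak} while Conjecture~\ref{H1} refutes Conjecture~\ref{conj-weak}, then Conjecture~\ref{H1} refutes $X$ as well. Thus a single refutation propagates upward through all the implications and yields the full statement of Theorem~\ref{T1}.

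Next I would argue by contradiction, assuming Conjectures~\ref{H1} and~\ref{conj-weak} both hold, and convert them into a density statement about prime knots. Write $H_n$ and $P_n$ for the numbers of hyperbolic and of prime knots with at most $n$ crossings. Every hyperbolic knot is prime, and $\frac23$-regularity is a property of prime knots, so the $\frac23$-regular hyperbolic knots counted by Conjecture~\ref{conj-weak} are exactly $\frac23$-regular primes; their number is at least $\varepsilon H_n$ for large $n$. Conjecture~\ref{H1} gives $H_n=(1-o(1))P_n$, so the number of $\frac23$-regular prime knots with at most $n$ crossings is at least $\delta P_n$ for some fixed $\delta>0$ and all large $n$. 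In other words, a positive proportion of all prime knots would be $\frac23$-regular.

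The core of the argument should then exploit connected sums of these abundant $\frac23$-regular primes. I would use three ingredients: unique prime factorization of knots, so that distinct multisets of prime factors give distinct knots; subadditivity $\crn(K_1\# K_2)\le\crn(K_1)+\crn(K_2)$, which bounds from above the crossing numbers of the manufactured composites; and $\frac23$-regularity itself, which bounds them from below, since a $\frac23$-regular factor $P$ of $K$ forces $\crn(K)\ge\frac23\crn(P)$. Lackenby's theorem that every knot is $\frac1{152}$-regular furnishes the complementary unconditional control --- in its additive form it bounds both the crossing number of each prime factor of a knot with at most $n$ crossings and the number of such factors. The goal is to play the lower bound coming from $\frac23$-regularity against the supply of knots of the relevant sizes, and to locate a crossing-number window in which the composite knots built from the positive-density family are forced to be more numerous than is possible.

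I expect the extraction of the contradiction to be the main obstacle. The naive injective counts do not suffice, because connected sum can inflate the crossing number: two primes of at most $c$ crossings produce composites of up to $2c$ crossings, and the number of such pairs is already of the same exponential order as the total number of knots of that size, so a direct pigeonhole leaves no slack. I therefore anticipate that the decisive estimate must be asymptotic, comparing the exponential growth rates of prime and of composite knots: Conjecture~\ref{H1} ties the growth of hyperbolic knots to that of all primes, while the positive proportion of $\frac23$-regular primes, propagated through the connected-sum construction, should force a relation among these rates that the bound from $\frac23$-regularity makes untenable. Pinning down the exact window of crossing numbers, and seeing precisely why the constant $\frac23$ (rather than some larger constant) is what closes the gap, is the step I expect to demand the most care.
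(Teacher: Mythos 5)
Your reduction step is exactly the paper's: since Conjectures~\ref{con:add}, \ref{con:sat}, \ref{con:com}, and~\ref{con:2/3} all imply Conjecture~\ref{conj-weak}, it suffices to show that Conjecture~\ref{H1} contradicts Conjecture~\ref{conj-weak}, and your conversion of the two hypotheses into ``a positive proportion of prime knots of $\le n$ crossings are $\frac23$-regular'' is also correct. But the core of your plan --- manufacturing \emph{composite} knots by connected sums of the abundant regular primes --- cannot close the argument, and the obstacle is not merely the counting slack you identify. Conjecture~\ref{H1} is a statement about \emph{prime} knots only: it places no constraint whatsoever on how many composite knots exist in any crossing-number window. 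However many composites you build from $\frac23$-regular primes, and however carefully you play subadditivity against Lackenby's $\frac1{152}$-bound, the resulting population is simply invisible to Conjecture~\ref{H1}; there is no quantity it can be forced to exceed. To get tension with Conjecture~\ref{H1} you must produce, from each regular prime, a \emph{prime non-hyperbolic} knot --- and this is precisely what your proposal lacks.

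The paper's mechanism is a satellite construction, not a connected-sum construction. For each $\frac23$-regular prime knot $P$ it builds a $\gamma$-knot $P'$ over $P$ (a satellite with winding number $2$ and figure-eight companion, inserted by the local move of Fig.~\ref{fig:gamma}) satisfying Proposition~\ref{lem:main}: $P'$ is a satellite (hence prime but non-hyperbolic), the assignment is injective over distinct non-satellite companions, and --- the hard part --- $P'$ can be chosen \emph{prime} with $\crn(P')\le\crn(P)+17$. Here is where $\frac23$ actually enters, and it is nothing like the role you anticipated: Lemma~\ref{lem:Berge-Coroll} shows any minimal diagram with $n$ crossings has neighboring edges at $\rho$-distance $\ge 2n/3$, which for a $\frac23$-regular prime forces the complementary $2$-string tangle to be prime or trivial (a local knot would be a copy of $P$ inside a knot of $\le\frac23\crn(P)-1$ crossings, violating regularity); primeness of $P'$ then follows from Lickorish's tangle-sum theorem (Theorem~\ref{thm:sum-prime}) and Lemma~\ref{lem:cable-triv-prime}. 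The endgame is then a clean count: Conjecture~\ref{conj-weak} gives $S_{n+17}\ge\varepsilon_0 H_n$ prime satellites, so $P_{n+17}\ge H_{n+17}+\varepsilon_0 H_n$, and under Conjecture~\ref{H1} (both ratios $H_m/P_m\to 1$) this forces $P_{n+17}/P_n\to\infty$, i.e.\ super-exponential growth of $P_n$, contradicting Welsh's exponential upper bound on the number of prime knots. So your proposal gets the framing and the reduction right, correctly diagnoses that naive pigeonhole fails, but is missing the decisive idea --- trading composites for prime satellites with an additively bounded crossing cost --- without which no contradiction with Conjecture~\ref{H1} can be extracted.
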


The paper is organised as follows. 
Section~\ref{sec:rems} contains remarks concerning Conjectures~\ref{H1}--\ref{conj-weak}.
In Section~\ref{sec:idea}, we present the key idea of the proof of Theorem~\ref{T1} and reduce Theorem~\ref{T1} to Proposition~\ref{lem:main} consisting of three assertions.
Sections~\ref{sec:proof-i}--\ref{sec:proof-iii} contain the proof of Proposition~\ref{lem:main}.
In Sections~\ref{sec:proof-i} and~\ref{sec:proof-ii}, we prove the first two assertions of Proposition~\ref{lem:main}. 
Section~\ref{sec:combinatorial_lemma} 
contains a combinatorial lemma used in the proof of the last assertion of Proposition~\ref{lem:main}.
Section~\ref{sec:tangles} contains preliminaries on tangles.
In Section~\ref{sec:proof-iii}, we prove the last assertion of Proposition~\ref{lem:main}. 
In Section~\ref{sec:add-I}, we introduce a new property of knots (strong property PT) 
and prove Theorem~\ref{T2} strengthening Theorem~\ref{T1}.
In Section~\ref{sec:add-II}, we show that 
an assumption that Conjectures~\ref{con:add}--\ref{con:2/3} has 
many strong counterexamples contradicts Conjecture~\ref{H1} as well.
In Section~\ref{sec:add-III}, we show that certain assumptions concerning unknotting numbers of knots contradict Conjecture~\ref{H1}.


The paper should be interpreted as being in either the PL or smooth category.
For standard definitions we mostly use the conventions of \cite{BZ06} and~\cite{BZH14}.
There will be a certain abuse of language in order to avoid complicating the notation. 
In particular, a \emph{knot} $K$ will be a circle embedded in a $3$-sphere $S^3$, 
a pair $(S^3, K)$, or a class of homeomorhic pairs (cf.~\cite[p.~1]{BZ06}).
No orientations on knots and spaces are placed if not otherwise stated.

The author is grateful to 
Ivan Dynnikov,
Evgeny Fominykh,
Aleksandr Gaifullin,
Vadim Kaimanovich,
Maksim Karev,
Paul Kirk,
Vladimir Nezhinskij,
Sem\"{e}n Podkorytov,
J\'{o}zef Przytycki, 
Alexey Sleptsov, and
Andrei Vesnin
for helpful comments and suggestions.

\section{Remarks} 
\label{sec:rems}

We list certain results related to Conjectures \ref{H1}--\ref{conj-weak}.


\subsection*{Predominance of hyperbolic objects} 
In recent years, a number of results have been obtained showing the predominance of hyperbolic objects in various cases.
We refer to the works of Ma~\cite{Ma14} and Ito~\cite[Theorem\,2]{Ito15} for results concerning genericity of hyperbolic knots and links.
See also~\cite{Mah10a}, \cite[Theorem\,2]{LMW14}, \cite{LusMo12}, \cite{Riv14}, and~\cite[Theorem\,1]{Ito15} for results on genericity of hyperbolic 3-manifolds.
Related results show that pseudo-Anosovs prevail (in various senses) in mapping class groups of surfaces.
We refer to 
\cite{Riv08, Riv09, Riv10, Riv12, Riv14}, \cite{Kow08},
\cite{Mah10b, Mah11, Mah12}, 
\cite{AK10}, \cite{Sis11}, \cite{Mal12}, \cite{LubMe12}, \cite{MS13}, and also \cite{Car13, CW13, Wi14} (non-random approach) for precise statements and detailed discussions.
See also~\cite{GTT16} for the genericity of loxodromic isometries for actions of hyperbolic groups on hyperbolic spaces.
Other examples of hyperbolicity predominance can be found in extensive literature on exceptional Dehn fillings (see \cite{Thu79} etc.) and in~\cite[0.2.A]{Gro87}, \cite[p.\,20]{GhH90}, \cite{Ch91} and \cite{Ch95}, \cite{Ols92}, \cite{Gro93}, \cite{Zuk03}, \cite{Oll04}, \cite{Oll05}, 
where viewpoints are given from which it appears that a generic finitely presented group is word hyperbolic. 
Apparently, combining approaches developed by~\cite{Ito15} and~\cite{Ma14} with results of \cite{Car13, CW13, Wi14} (the same for \cite{LusMo12}) one can obtain more viewpoints where generic knot will be hyperbolic.

\subsection*{Predominance of non-hyperbolic objects} 
As for natural models where it is proved that hyperbolic objects are rare,
we have the standard methods of generating knots as polygons in~${\mathbb R}^3$.
Under this approach, composite knots prevail and prime knots (including hyperbolic ones) are asymptotically scarce.
See Sumners and Whittington~\cite{SW88}, Pippenger~\cite{Pip89}, and also Soteros, Sumners, and Whittington~\cite{SSW92} 
for the case of self-avoiding random polygons on the simple cubic lattice; 
see \cite{Oetal94} and \cite{Sot98} for such polygons in specific subsets of the lattice; 
see \cite{DPS94} and \cite{Jun94} for local and global knotting in Gaussian random polygons;
see \cite{Di95} and \cite{DNS01} for local and global knotting in equilateral random polygons;
see also \cite{Ken79} for knotting of Brownian motion and  \cite{Sum09} and \cite{MMO11} for more references.
An interesting idea has appeared in \cite[p.\,4]{Ad05} and \cite[p.\,95]{Cr04} that prime satellite knots should prevail over hyperbolic ones when we consider 
Gaussian random polygons. 
In both \cite{Ad05} and \cite{Cr04}, however, the idea was apparently inspired by a misinterpretation of results in~\cite{Jun94}.

\subsection*{Crossing number additivity} 
Murasugi~\cite[Corollary~6]{Mur87} proved that Conjecture~\ref{con:add} is valid for alternating knots. (This follows from the proof of Tait conjecture that reduced alternating projections are minimal;
this Tait conjecture was also proved, independently, by Kauffman~\cite{Kau87} and Thistlethwaite~\cite{Thi87}.) Conjecture~\ref{con:add} is valid for \emph{adequate} knots (see~\cite{LT88}).
Diao~\cite{Di04} and Gruber~\cite{Gru03} independently proved that Conjecture~\ref{con:add} is valid for torus knots and certain other particular classes of knots.
Results of \cite{Mur87}, \cite{Kau87}, \cite{Thi87} imply that alternating knots are 1-regular.\footnote{It is shown in \cite{Mur87}, \cite{Kau87}, \cite{Thi87} that 
(i) for each knot $K$ we have $\span V_K(t)\le\crn(K)$, and
(ii) for alternating $K$ we have $\span V_K(t)=\crn(K)$,
where $V_K(t)$ is the Jones polynomial of~$K$ and $\span V_K(t)$ denotes the difference between the maximal and minimal degrees of $V_K(t)$. (It is known that $V_K(t)\neq 0$ so that $\span V_K(t)$ is well-defined. See~\cite[Theorem~15]{Jon85}.) Then 1-regularity of alternating knots follows because, for any knots $K_1$ and $K_2$, we have \cite[Theorem~6]{Jon85}
$$
\span V_{K_1\sharp K_2}(t)=\span V_{K_1}(t)+\span V_{K_1}(t).
$$}
Diao~\cite[Theorem 3.8]{Di04} showed that torus knots 
are 1-regular. 
In \cite{PZ15}, the authors 
introduce a telescopic family of conjectures concerning monotonic simplification of link diagrams and provide supporting evidence for (the strongest of) these conjectures.\footnote{Ivan Dynnikov (private communication) found a counterexample to Petronio--Zanellati conjectures.}
Each of Petronio--Zanellati conjectures implies Conjecture~\ref{con:add}.

\subsection*{Torus knots} 
Murasugi~\cite[Proposition~7.5]{Mur91} proved that the torus link of type $(p, q)$, where $2\le p\le q$, has crossing number $(p-1)q$.
Taking into account that the number of all prime knots of~$n$ crossings grows exponentially in~$n$
(see~\cite{ES87,Wel92}), this implies that 
the percentage of torus knots amongst all of the prime knots of~$n$ or fewer crossings approaches $0$ as $n$ approaches infinity.
Thus, only satellite knots pose a danger to Conjecture~\ref{H1}.

\subsection*{Hyperbolic knots} 
Several interesting classes of knots are known to consist of hyperbolic and torus knots only.
In particular, amongst these classes are:

-- prime alternating knots, including 2-bridge knots (see \cite{Men84}),

-- prime almost alternating knots (see \cite{Aetal92}), 

-- prime toroidally alternating knots (see \cite{Ad94a}), 

-- arborescent knots, including 2-bridge knots, pretzel knots, and Montesinos knots (see \cite{BS10}, 
Theorem~1.5 and subsequent discussion in \cite{FG09}), etc.

More families of hyperbolic knots, links, and tangles are listed in~\cite{Ad05}.
See \cite{Ito11}, \cite[Theorems 8.3, 8.4]{IK12} for new examples of huge classes of hyberbolic knots, links, and $3$-manifolds.

\section{The idea of the proof of Theorem~\ref{T1}}
\label{sec:idea}

Our proof of Theorem~\ref{T1} uses a specific way of constructing satellite knots. 
For brevity, we use the term \emph{$\gamma$-knots} for the satellite knots constructed in this way.

\subsection*{Definition.~$\gamma$-Knots.}
Let $K$ be a knot in a $3$-sphere $S^3$, and let $V$ be an unknotted solid torus in~$S^3$ such that $K$ is contained in the interior of~$V$. 
Let $\psi\colon V\to W\subset S^3$ be a homeomorphism onto a tubular neighbourhood~$W$ of a hyperbolic knot.
Recall that the \emph{winding number} of~$K$ in~$V$ is the absolute value of the algebraic intersection number of~$K$ with a meridional disk in~$V$.
Assume that the winding number of~$K$ in~$V$ is at least~$2$
and that $\psi$ maps a longitude\footnote{If a solid torus $U$ is embedded in a $3$-sphere~$S^3$, 
then there exists an essential curve in~$\dd U$ that bounds a $2$-sided surface in $S^3 \sm \inr(U)$ (a Seifert surface). 
This curve is unique up to isotopy on~$\dd U$ and is called a \emph{longitude} of $U$ in~$S^3$ (see, e.\,g., \cite[Theorem~3.1]{BZ06}).} of~$V$ to a longitude of~$W$.
Then we say that the knot $\psi(K)\subset S^3$ is a \emph{$\gamma$-knot over}~$K$.

A~method of constructing a $\gamma$-knot is given in Fig.~\ref{fig:gamma}.
Assume that a diagram~$D'$ of a knot~$K'$ is obtained from a diagram~$D$ of a knot~$K$ by local move as in Fig.~\ref{fig:gamma}.
(See Fig.~\ref{fig:gamma-tref} for an example.)
Our definitions imply that if two arrows on arcs in Fig.~\ref{fig:gamma}(a) indicate the same orientation on~$K$, then $K'$ is a $\gamma$-knot over~$K$. 
Here, the winding number is~$2$ while the companion hyperbolic knot
is the figure-eight knot.
(In order to check that the condition on longitudes is also fulfilled, we observe that each arc in Fig.~\ref{fig:gamma} has zero total curvature.)

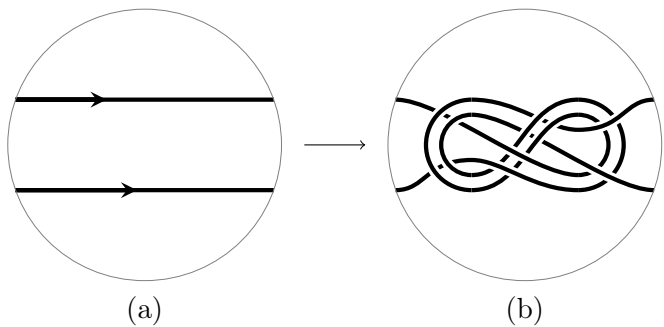
\begin{figure}[ht]
\begin{center}

\begin{tikzpicture} 
[knots/.style={white,double=black,very thick,double distance=1.6pt}]
\draw[line width=1.6pt]  [-stealth]
(-1/5,3/5) -- (16/5,3/5) 
(-1/5,3/5) -- (5/5,3/5);
\draw[line width=1.6pt] [-stealth]
(-1/5,-3/5) -- (16/5,-3/5)
(-1/5,-3/5) -- (7/5,-3/5);
 \draw[help lines](7.5/5,0) circle (9/5);
 \draw (7.5/5,-11/5)  node {(a)};

\draw[->] (20/5-0.4,0)--(20/5+.4,0);

\begin{scope}
[xshift=5cm] 

\draw[knots]
(11/5,-2/5) 
.. controls +(1.111/5,0) and +(0,-1.111/5) .. ++(2/5,2/5)
.. controls +(0,1.111/5) and +(1.111/5,0) .. ++(-2/5,2/5)
(11/5,-3/5) 
.. controls +(1.666/5,0) and +(0,-1.666/5) .. ++(3/5,3/5)
 .. controls +(0,1.666/5) and +(1.666/5,0) .. ++(-3/5,3/5);
\draw[knots] 
(4/5,3/5)  .. controls +(3/5,0) and +(-2/5,-.0/5) .. (11/5,1/5) .. controls +(3/5,0) and +(-2/5,-.0/5) .. (16/5,3/5)
(4/5,2/5) .. controls +(3/5,0) and  +(-3/5,-.0/5) .. (16/5,-3/5);
\draw[knots] 
(11/5,2/5) .. controls +(-3/5,0) and +(3.5/5,0) .. (4/5,-3/5)
(11/5,3/5) .. controls +(-3.5/5,0) and +(3/5,0) .. (4/5,-2/5);
%
\draw[knots] 
(-1/5,3/5) .. controls +(3/5,0) and  +(-3/5,0) ..  (11/5,-2/5)
(-1/5,-3/5) .. controls +(2/5,0) and +(-3/5,0) .. (4/5,-1/5)  .. controls +(2/5,0) and +(-3/5,0) .. (11/5,-3/5);
%
\draw[knots] 
 (4/5,-3/5)
 .. controls +(-1.666/5,0) and +(0,-1.666/5) .. ++(-3/5,3/5)
 .. controls +(0,1.666/5) and +(-1.666/5,0) .. ++(3/5,3/5)
(4/5,-2/5)
 .. controls +(-1.111/5,0) and +(0,-1.111/5) .. ++(-2/5,2/5)
 .. controls +(0,1.111/5) and +(-1.111/5,0) .. ++(2/5,2/5);
 \draw[help lines](7.5/5,0) circle (9/5);
 \draw (7.5/5,-11/5)  node {(b)};
 \end{scope}
\end{tikzpicture} 
\end{center}
\caption{Double figure-eight move} \label{fig:gamma}
\end{figure}

We deduce Theorem~\ref{T1} from the following proposition on $\gamma$-knots.

\begin{prop}\label{lem:main}
{\rm(i)} Each $\gamma$-knot is a satellite knot. 

{\rm(ii)} The sets of $\gamma$-knots over distinct non-satellite knots are disjoint.

{\rm(iii)} If $P$ is a $\frac23$-regular prime knot, then there exists a \underline{prime} $\gamma$-knot $P'$ over~$P$ with $\crn(P')\le \crn(P)+17$.
\end{prop}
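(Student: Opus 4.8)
\emph{Assertions (i) and (ii).} For (i) I would argue directly from the definition. Writing $\kappa=\psi(K)$ and $W=\psi(V)$, the torus $\partial W$ is incompressible in $S^3\sm\inr(W)$ because $W$ is a tubular neighbourhood of a nontrivial (hyperbolic) knot, and it is incompressible in $W\sm\kappa$ because the winding number of $\kappa$ in $W$ equals that of $K$ in $V$, namely at least $2$, so $\kappa$ meets every meridian disc of $W$ and is neither contained in a ball nor isotopic to the core. Thus $\partial W$ is essential with nontrivial companion, which is the definition of a satellite. For (ii) the tool is uniqueness of the torus decomposition. If one knot $\kappa$ were a $\gamma$-knot over non-satellite knots $K_1,K_2$ via $\psi_i\colon V_i\to W_i$, then each $\partial W_i$ would cut $S^3\sm\kappa$ into the hyperbolic exterior of a companion and the piece $W_i\sm\kappa\cong V_i\sm K_i$; the latter is atoroidal, since an essential torus in it would survive the (compressible) filling along $\partial V_i$ and contradict the non-satelliteness of $K_i$. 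Hence $\partial W_i$ is the unique essential torus of $S^3\sm\kappa$, so $\partial W_1$ and $\partial W_2$ are isotopic, and after an ambient isotopy $W_1=W_2=W$. Then $\psi_2^{-1}\psi_1$ is a self-homeomorphism of the unknotted solid torus taking $K_1$ to $K_2$ and fixing both the meridian and (by the longitude hypothesis) the longitude; it therefore extends over the complementary solid torus to a homeomorphism of $S^3$, giving $K_1=K_2$.

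\emph{Assertion (iii), construction and crossing bound.} I would realise $P'$ by the double figure-eight move of Fig.~\ref{fig:gamma}. Beginning with a minimal diagram of $P$, I replace two parallel same-oriented strands by the fixed tangle of Fig.~\ref{fig:gamma}(b), so that $P'$ becomes the winding-number-$2$ satellite of the figure-eight knot whose pattern knot is $P$; the orientation and zero-total-curvature remarks accompanying the figure confirm that this is a $\gamma$-knot over $P$ in the sense of the definition. Because the inserted tangle carries exactly $17$ crossings and the remainder of the diagram is the minimal diagram of $P$, the resulting diagram has at most $\crn(P)+17$ crossings, so $\crn(P')\le\crn(P)+17$ with no further argument.

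\emph{Assertion (iii), primeness.} Proving that $P'$ is \emph{prime} is the crux, and I expect it to be the main obstacle. The difficulty is concrete: if a ball meeting $P'$ in two points could enclose the inserted copy of $P$, then $P'$ would split as $P\sharp R$ with $R$ the winding-$2$ satellite of the figure-eight with trivial pattern, a nontrivial knot, so $P'$ would be composite. The two figure-eight clasps are placed exactly to prevent such an enclosure. To prove that no decomposing sphere survives, I would take a hypothetical essential sphere, put it in general position with respect to the companion torus $\partial W$, and use incompressibility of $\partial W$ together with the atoroidality and anannularity of the figure-eight exterior to delete all inessential curves of intersection; the surviving pieces are recorded as tangles, as prepared in Section~\ref{sec:tangles}. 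The remaining, genuinely linked configuration is then reduced by the combinatorial lemma of Section~\ref{sec:combinatorial_lemma} to a crossing count, and here the $\frac23$-regularity of $P$ furnishes the lower bound on the crossing number of the hypothetical nontrivial factor that is incompatible with $\crn(P')\le\crn(P)+17$. Carrying out this last reduction — verifying that the clasps really do obstruct every connected-sum sphere — is the step I expect to be hardest.
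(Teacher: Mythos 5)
Your treatment of (i) is fine and matches the paper's. The other two assertions contain genuine gaps.

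For (ii), your route via uniqueness of the essential torus fails at the atoroidality claim. You assert that $V_i\sm K_i$ is atoroidal because an essential torus there ``would survive'' the filling along $\dd V_i$; but tori can compress after Dehn filling, and the claim is in fact false. Take the standard solid torus $V$, let $V'\subset V$ be a tubular neighbourhood of a $(2,1)$-curve, and let $K\subset V'$ be a core with a local trefoil tied in it. Then $K$ is a trefoil in $S^3$ (non-satellite) with winding number $2$ in $V$, yet $\dd V'$ is essential in $V\sm K$: the outside $V\sm \inr(V')$ is a cable space with incompressible boundary; a compressing disk inside $V'$ disjoint from $K$ would have to be a meridional disk, impossible since $K$ has winding number $1$ in $V'$; and neither complementary region is $T^2\times I$. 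After the filling, $\dd V'$ dies — it compresses via a meridian disk of the unknotted complement of $V'$, since the $(2,1)$-curve is unknotted in $S^3$. So a $\gamma$-knot complement can carry several essential tori, and ``$\dd W_i$ is the unique essential torus'' is unjustified. The paper argues in the opposite direction: it shows (Claim~\ref{cl:isotV1V2j}) that $H_1\neq H_2$ forces the two tori to be \emph{non-isotopic}, isotopes them to be disjoint, and then uses Budney's Proposition~\ref{prop:Budney} to produce a meridional disk of $V_2$ inside $V_1$, concluding that $H_1=\phi_1(K)$ would itself be a satellite — contradicting non-satelliteness of the base. Non-satelliteness rules out a second $\gamma$-torus; it does not give atoroidality of the pattern space.

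For (iii), the construction and the bound $\crn(P')\le\crn(P)+17$ are right in outline (note the inserted tangle has $16$ crossings; the seventeenth comes from a Reidemeister-I move needed when the two strands at the chosen site are anti-parallel — with opposite orientations the winding number would be $0$, so you cannot simply posit same-oriented strands). The crux, primeness, you leave as an admitted sketch, and the sketch points the wrong way: a hypothetical connected summand of $P'$ yields no crossing-number bound to play against $\crn(P)+17$ without assuming exactly the additivity-type statements in question. The paper instead uses Lemma~\ref{lem:Berge-Coroll} \emph{before} the insertion, to choose the site: a pair of neighboring edges at $\rho$-distance at least $\frac23\crn(P)$. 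Then $\frac23$-regularity shows the complementary tangle $(B,t)$ has no local knots (a local knot would be $P$ itself by primeness of $P$, and its closure would be a knot with factor $P$ and at most $\frac23\crn(P)-1$ crossings, contradicting regularity), so $(B,t)$ is prime or trivial; primeness of $P'$ then follows from Theorem~\ref{thm:sum-prime} together with Lemmas~\ref{lem:cable-prime} and~\ref{lem:cable-triv-prime}, with no innermost-sphere analysis. Choosing the insertion site arbitrarily genuinely fails: Fig.~\ref{fig:gamma-tref} exhibits a composite $\gamma$-knot over the trefoil.
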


\subsection*{Remark}
Assertion~(iii) of Proposition~\ref{lem:main}
is not obvious because a $\gamma$-knot over a prime knot is not necessarily prime (see Fig.~\ref{fig:gamma-tref}).

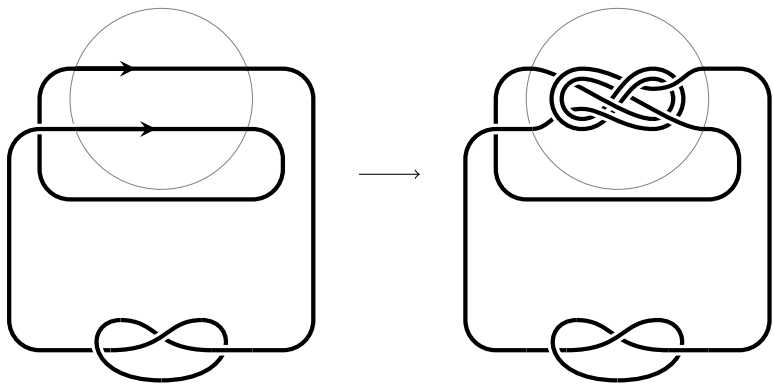
\begin{figure}[ht]
\begin{center}

\begin{tikzpicture} 
[knots/.style={white,double=black,very thick,double distance=1.6pt}]

\begin{scope}
[scale=2/3]

\draw[line width=1.6pt]  [-stealth]
(-1/5,3/5) -- (16/5,3/5) 
(-1/5,3/5) -- (5/5,3/5);
\draw[line width=1.6pt] [-stealth]
(-1/5,-3/5) -- (16/5,-3/5)
(-1/5,-3/5) -- (7/5,-3/5);
 \draw[help lines](7.5/5,0) circle (9/5);

\begin{scope}
[xshift=1.5cm, yshift=-5cm]

\draw[knots] (-9/5,0/5)--(-5/5,0/5);
\draw[knots] (4/5,3/5) .. controls +(3.666/5,0) and +(7.666/5,0) .. (0/5,-3/5);
\draw[knots] (0/5,-3/5)  .. controls +(-7.666/5,0) and +(-3.666/5,0) .. (-4/5,3/5);
\draw[knots] (-4/5,3/5)  .. controls +(3.666/5,0) and +(-5.666/5,0) .. (5/5,0/5);
\draw[knots] (9/5,0/5)--(5/5,0/5);
\draw[knots] (-5/5,0/5) .. controls +(5.666/5,0) and +(-3.666/5,0) .. (4/5,3/5);

\draw[line width=1.6pt] (9/5,0/5)--(12/5,0/5) arc (-90:0:3/5)--(15/5,25/5) arc (0:90:3/5)--(8.4/5,28/5);
\draw[line width=1.6pt] (-8.4/5,28/5)--(-9/5,28/5) arc (90:180:3/5)--(-12/5,18/5) arc (180:270:3/5)
--(9/5,15/5) arc (-90:0:3/5)--(12/5,19/5) arc (0:90:3/5)--(8.4/5,22/5);
\draw[knots]  (-8.4/5,22/5)--(-12/5,22/5) arc (90:180:3/5)--(-15/5,3/5) arc (180:270:3/5)
--(-9/5,0/5);
\end{scope}

\end{scope}

\draw[->] (20/5-0.4,-1)--(20/5+.4,-1);

\begin{scope}
[xshift=6cm, scale=2/3]

\draw[knots]
(11/5,-2/5) 
.. controls +(1.111/5,0) and +(0,-1.111/5) .. ++(2/5,2/5)
.. controls +(0,1.111/5) and +(1.111/5,0) .. ++(-2/5,2/5)
(11/5,-3/5) 
.. controls +(1.666/5,0) and +(0,-1.666/5) .. ++(3/5,3/5)
 .. controls +(0,1.666/5) and +(1.666/5,0) .. ++(-3/5,3/5);
\draw[knots] 
(4/5,3/5)  .. controls +(3/5,0) and +(-2/5,-.0/5) .. (11/5,1/5) .. controls +(3/5,0) and +(-2/5,-.0/5) .. (16/5,3/5)
(4/5,2/5) .. controls +(3/5,0) and  +(-3/5,-.0/5) .. (16/5,-3/5);
\draw[knots] 
(11/5,2/5) .. controls +(-3/5,0) and +(3.5/5,0) .. (4/5,-3/5)
(11/5,3/5) .. controls +(-3.5/5,0) and +(3/5,0) .. (4/5,-2/5);
%
\draw[knots] 
(-1/5,3/5) .. controls +(3/5,0) and  +(-3/5,0) ..  (11/5,-2/5)
(-1/5,-3/5) .. controls +(2/5,0) and +(-3/5,0) .. (4/5,-1/5)  .. controls +(2/5,0) and +(-3/5,0) .. (11/5,-3/5);
%
\draw[knots] 
 (4/5,-3/5)
 .. controls +(-1.666/5,0) and +(0,-1.666/5) .. ++(-3/5,3/5)
 .. controls +(0,1.666/5) and +(-1.666/5,0) .. ++(3/5,3/5)
(4/5,-2/5)
 .. controls +(-1.111/5,0) and +(0,-1.111/5) .. ++(-2/5,2/5)
 .. controls +(0,1.111/5) and +(-1.111/5,0) .. ++(2/5,2/5);
 \draw[help lines](7.5/5,0) circle (9/5);
 
\begin{scope}
[xshift=1.5cm, yshift=-5cm]

\draw[knots] (-9/5,0/5)--(-5/5,0/5);
\draw[knots] (4/5,3/5) .. controls +(3.666/5,0) and +(7.666/5,0) .. (0/5,-3/5);
\draw[knots] (0/5,-3/5)  .. controls +(-7.666/5,0) and +(-3.666/5,0) .. (-4/5,3/5);
\draw[knots] (-4/5,3/5)  .. controls +(3.666/5,0) and +(-5.666/5,0) .. (5/5,0/5);
\draw[knots] (9/5,0/5)--(5/5,0/5);
\draw[knots] (-5/5,0/5) .. controls +(5.666/5,0) and +(-3.666/5,0) .. (4/5,3/5);

\draw[line width=1.6pt] (9/5,0/5)--(12/5,0/5) arc (-90:0:3/5)--(15/5,25/5) arc (0:90:3/5)--(8.4/5,28/5);
\draw[line width=1.6pt] (-8.4/5,28/5)--(-9/5,28/5) arc (90:180:3/5)--(-12/5,18/5) arc (180:270:3/5)
--(9/5,15/5) arc (-90:0:3/5)--(12/5,19/5) arc (0:90:3/5)--(8.4/5,22/5);
\draw[knots]  (-8.4/5,22/5)--(-12/5,22/5) arc (90:180:3/5)--(-15/5,3/5) arc (180:270:3/5)
--(-9/5,0/5);
\end{scope}
\end{scope}
 
\end{tikzpicture} 
\end{center}
\caption{A composite $\gamma$-knot over the trefoil} \label{fig:gamma-tref}
\end{figure}

\label{sec:L=>T}

\begin{proof}[Proposition~{\rm\ref{lem:main}} implies Theorem~{\rm\ref{T1}}]
We introduce the following notation.
Let $p_n$ (resp., $h_n$, $s_n$) denote the number of prime (resp., hyperbolic, prime satellite) knots with crossing number~$n$.
We set $P_n=\sum_{k=1}^{n}p_n$, $H_n=\sum_{k=1}^{n}h_n$, and $S_n=\sum_{k=1}^{n}s_n$.

Since each of Conjectures 
\ref{con:add},
\ref{con:sat}, 
\ref{con:com}, and
\ref{con:2/3} 
implies 
Conjecture~\ref{conj-weak} (see the diagram before Theorem~\ref{T1}), 
it suffices to prove only that Conjectures~\ref{conj-weak} and~\ref{H1} are incompatible.
If Conjecture~\ref{conj-weak} is true, then there exist $\varepsilon_0>0$ and $N_0>0$ such that, for all $n>N_0$, the number of $\frac23$-regular hyperbolic knots of~$n$ or fewer crossings is at least~$\varepsilon_0 H_n$.
Obviously, in this case assertions~(i), (ii), and~(iii) of Proposition~\ref{lem:main} imply that (for all $n>N_0$) we have 
$$
S_{n+17}~\ge~ \varepsilon_0H_n.
$$
Therefore, we have 
$$P_{n+17}~\ge~ H_{n+17}+\varepsilon_0 H_n.$$
This is equivalent to the following inequality
\begin{equation}
\label{eq:PHPH}
1~\ge~ \frac{H_{n+17}}{P_{n+17}}+\varepsilon_0 \frac{H_n}{P_n}\frac{P_n}{P_{n+17}}.
\end{equation}
If Conjecture~\ref{H1} is true, then both sequences $\frac{H_{n+17}}{P_{n+17}}$ and $\frac{H_n}{P_n}$ tend to~$1$. 
In this case, Eq.~\eqref{eq:PHPH} implies that 
$$
\frac{P_{n+17}}{P_n}\xrightarrow{n\to +\infty} +\infty.
$$
Consequently, for each $B>0$ we have $P_{n}> B^n$ for all sufficiently large~$n$. (We consider subsequences of the form $P_{n_0+17i}$, $i\in\mathbb{N}$.)
In other words, we have
\begin{equation}
\label{eq:Pinfty}
P_{n}^{1/n} \xrightarrow{n\to +\infty} +\infty.
\end{equation}
However, it is shown in~\cite{Wel92} that
$$
\limsup_{k\to\infty} p_{n}^{1/n} < +\infty,
$$
which implies that there exists $B>0$ such that 
$p_n<B^n$ for all $n\in\mathbb{N}$.
Then, for each $n\in\mathbb{N}$ we have $P_n<(B+1)^n$ whence it follows that 
$$
\limsup_{k\to\infty} P_{n}^{1/n} \le B+1 < +\infty.
$$
This contradicts~\eqref{eq:Pinfty}. The obtained contradiction completes the proof.
\end{proof}

%

\section{Proof of assertion $\rm(i)$ of Proposition~\ref{lem:main}}
\label{sec:proof-i}


We recall definitions of satellite knots.
A knot $K$ in $S^3$ is a \emph{satellite knot} if $S^3$ contains a non-trivial knot~$C$ such that 
$K$ lies in the interior of a regular neighbourhood~$V$ of~$C$, $V$ does not contain a $3$-ball containing~$K$, and $K$ is not a core curve of the solid torus~$V$. 
The knot $K$ is a satellite knot if and only if $K$ contains an incompressible, non-boundary parallel torus in its complement. (For a proof, see~\cite[Remark~16.1, p.~335]{BZH14}.)

Let $K$ be a $\gamma$-knot in~$S^3$.
Then the definition of $\gamma$-knots implies that $K$ lies in a knotted solid torus $W\subset S^3$ such that the winding number of $K$ in~$W$ is at least~$2$.
Since the winding number of $K$ in $W$ is at least~$2$, 
it follows that $W$ does not contain a $3$-ball containing~$K$, and $K$ is not a core curve of~$V$.
This means by the above definition that $K$ is a satellite knot.

\section{Proof of assertion $\rm(ii)$ of Proposition~\ref{lem:main}}
\label{sec:proof-ii}

We show that the sets of $\gamma$-knots over distinct non-satellite knots are disjoint.
Suppose to the contrary that there exist a knot $K$ and two distinct non-satellite knots $H_1$ and $H_2$ such that $K$ is a $\gamma$-knot both over $H_1$ and over~$H_2$. 
%
%
By the definition of $\gamma$-knots, this means that there exist embedded solid tori $V_1$ and $V_2$ in $S^3$ and re-embeddings $\phi_1\colon V_1\to S^3$ and $\phi_2\colon V_2\to S^3$ such that, for each $i\in\{1,2\}$, the following conditions hold:

-- $V_i$ is a tubular neighbourhood of a hyperbolic knot,

-- $K$ lies in the interior of $V_i$ and the winding number of $K$ in $V_i$ is at least $2$,

-- the solid torus $\phi_i(V_i)$ is unknotted,

-- $\phi_i$ maps a longitude of $V_i$ to a longitude of $\phi_i(V_i)$,


-- we have $\phi_i(K)=H_i$.


\begin{claim}
\label{cl:Vi-incompressible}
The tori $\dd V_1$ and $\dd V_2$ are both incompressible in~$S^3\sm K$.
\end{claim}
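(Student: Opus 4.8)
The plan is to argue by contradiction, exploiting that each torus $\dd V_i$ separates $S^3$ into the solid torus $V_i$ and the exterior $X_i := S^3 \sm \inr(V_i)$ of the hyperbolic knot of which $V_i$ is a tubular neighbourhood; call this knot $C_i$. Fix $i \in \{1,2\}$ and suppose, for contradiction, that $\dd V_i$ admits a compressing disk $D$ in $S^3 \sm K$, so that $D \cap \dd V_i = \dd D$ with $\dd D$ essential on $\dd V_i$. Since $\inr(D)$ misses the separating torus $\dd V_i$, the disk lies entirely on one side, and I would split into the case $D \subset V_i$ and the case $D \subset X_i$.

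In the case $D \subset V_i$, I would use that the only essential simple closed curve on the boundary of a solid torus bounding a disk inside it is the meridian; hence $\dd D$ is a meridian and $D$ is a meridional disk of $V_i$. Because $K \subset \inr(V_i)$ has winding number at least $2$, its algebraic---and therefore geometric---intersection number with every meridional disk is at least $2$, so $D \cap K \neq \emptyset$. This contradicts $D \subset S^3 \sm K$.

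In the case $D \subset X_i$, the disk is automatically disjoint from $K$ (as $K \subset \inr V_i$), so it would genuinely compress $\dd V_i$ inside the exterior $X_i$. I would rule this out by the standard fact that the exterior of a nontrivial knot has incompressible boundary: a compression of $\dd V_i$ in $X_i$ would force the core $C_i$ to bound a disk, that is, to be unknotted, contradicting that $C_i$ is hyperbolic. Concretely, since the meridian generates the infinite cyclic first homology of $X_i$ while $\dd D$ is null-homologous there, the slope of $\dd D$ must be the longitude of $V_i$; capping $C_i$ off through $D$ then unknots it.

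Both cases being impossible, no compressing disk exists, so each $\dd V_i$ is incompressible in $S^3 \sm K$, as claimed. The step I expect to carry the real content is the second case, where I must invoke incompressibility of nontrivial knot exteriors and correctly pin down the compressing slope as the longitude; the first case is a direct consequence of the winding-number hypothesis, and the reduction to two cases is immediate from the separation property of $\dd V_i$. It is worth noting that the longitude condition from the definition of $\gamma$-knots plays no role here---incompressibility rests only on $C_i$ being nontrivial and on the winding number being at least $2$.
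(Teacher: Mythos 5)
Your proof is correct, and it takes a genuinely different route from the paper's. The paper does not analyze compressing disks at all: it observes that nonzero winding number implies no $3$-ball in $V_i$ contains $K$, so that $\dd V_i$ is a \emph{companion torus} of~$K$, and then simply quotes the standard fact that a companion torus is incompressible in the knot complement (citing Burde--Zieschang--Heusener, Propositions~3.10, 3.12 and E~2.9). You instead re-prove that fact from scratch, and both halves of your case analysis are sound: for $D\subset V_i$, injectivity of $H_2(V_i,\dd V_i)\to H_1(\dd V_i)$ shows any properly embedded disk with essential (hence meridional) boundary has algebraic intersection number with $K$ equal to $\pm$(winding number)$\,\neq 0$; for $D\subset X_i$, the kernel of $H_1(\dd V_i)\to H_1(X_i)\cong\mathbb{Z}$ is generated by the longitude, which is isotopic in $V_i$ to the core, so the compression would unknot the hyperbolic companion. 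Two comparative points are worth noting. First, your argument is more elementary and self-contained, and since the hypothesized compressing disk is already embedded you never need the loop theorem, whereas the paper buys brevity with a citation. Second, the cited fact is strictly more general: a companion torus is incompressible under the weaker hypothesis that $K$ lies in no $3$-ball of $V_i$ (e.g.\ Whitehead-type patterns with winding number $0$), where your case-1 homological shortcut would fail and one would instead argue that a compressing meridional disk missing $K$ cuts $V_i$ into a ball containing~$K$; here the winding number is at least~$2$, so nothing is lost. Your closing observation that the longitude condition in the definition of $\gamma$-knots plays no role in this claim is consistent with the paper, whose proof likewise uses only that the winding number is nonzero.
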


Since the winding number of $K$ in $V_i$ is non-zero, it follows that no $3$-ball in $V_i$ contains~$K$.
If a knotted solid torus~$U$ in a $3$-sphere $S^3$ contains a knot $L$ in its interior while
no $3$-ball in $U$ contains~$L$, then $\dd U$ is sometimes called a \emph{companion torus} of~$L$. It is well known that, in this case, $\dd U$ is incompressible in~$S^3\sm L$. 
(See, e.\,g., \cite[Propositions~3.10 and~3.12, and E~2.9]{BZH14}.)
This implies Claim~\ref{cl:Vi-incompressible}.


\begin{claim}
\label{cl:iso-dd}
There exists an isotopy of $\dd V_1$ in $S^3\sm K$ that moves $\dd V_1$ to a position where $\dd V_1 \cap \dd V_2=\emptyset$.
\end{claim}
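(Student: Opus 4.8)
The plan is to put the two tori in general position and then to remove the circles of intersection, family by family, using incompressibility (Claim~\ref{cl:Vi-incompressible}) together with the fact that the exterior of a hyperbolic knot carries no essential annuli. First I would shrink the tubular neighbourhood of $K$ so that $N(K)\subset \operatorname{int}(V_1)\cap \operatorname{int}(V_2)$ and pass to the knot exterior $E=S^3\sm \operatorname{int} N(K)$; this is an irreducible $3$-manifold, disjoint from $K$, in which both $\partial V_1$ and $\partial V_2$ sit as incompressible tori. After a small isotopy I may assume $\partial V_1$ and $\partial V_2$ are transverse, so that $\partial V_1\cap\partial V_2$ is a finite disjoint union of simple closed curves; the goal is then to reduce this collection to the empty set by an ambient isotopy of $\partial V_1$ inside $E$.

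The first reduction removes the \emph{inessential} intersection curves. If some component of $\partial V_1\cap\partial V_2$ bounds a disk on either torus, I choose an innermost such disk $D'$, say on $\partial V_2$, so that $\operatorname{int} D'$ is disjoint from $\partial V_1$ and $\partial D'=c\subset\partial V_1$. Were $c$ essential on $\partial V_1$, the disk $D'$ would compress $\partial V_1$ in $E$, contradicting Claim~\ref{cl:Vi-incompressible}; hence $c$ bounds a disk $D\subset\partial V_1$. The sphere $D\cup D'$ bounds a ball in $E$ by irreducibility, and pushing $D$ across this ball is an isotopy of $\partial V_1$ in $E$ that deletes $c$ (and every curve inside $D'$) without introducing new intersections. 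Iterating, I arrange that every remaining curve of $\partial V_1\cap\partial V_2$ is essential on both tori; being disjoint essential curves on a torus, they are then mutually parallel on $\partial V_1$ and on $\partial V_2$. If this collection is already empty we are done, so assume not; then $\partial V_1$ meets both $V_2$ and its exterior, and $\partial V_1\cap E(C_2)$, where $E(C_2)=S^3\sm\operatorname{int} V_2$ is the exterior of the hyperbolic companion $C_2$, is a nonempty union of annuli properly embedded in $E(C_2)$ with essential boundary.

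The hard part is eliminating these essential curves, and this is where the hyperbolicity of the companions enters. I would argue that each annulus $A\subset\partial V_1\cap E(C_2)$ is incompressible in $E(C_2)$: a compressing disk would have boundary essential in $A$, hence essential on $\partial V_1$, and would lie in $E(C_2)\subset E$, again contradicting Claim~\ref{cl:Vi-incompressible}. Since $C_2$ is a hyperbolic knot, its exterior is atoroidal and \emph{anannular} (it is neither a torus nor a cable knot), so every incompressible properly embedded annulus with essential boundary is boundary-parallel. Choosing $A$ so that the product region it cuts off is innermost, I can therefore isotope $A$ across this region and slightly into $V_2$; this is an isotopy of $\partial V_1$ in $E$ that strictly decreases the number of intersection curves, and an induction on this number drives $\partial V_1\cap\partial V_2$ to the empty set. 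The two points needing the most care are the verification that the annuli are genuinely incompressible, so that anannularity applies, and the bookkeeping ensuring that every isotopy is supported in $E$ and hence avoids $K$; the clean input that makes the essential-curve step work is precisely the anannularity of hyperbolic knot exteriors.
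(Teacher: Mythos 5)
Your proposal is correct and follows essentially the same route as the paper: an innermost-disk argument using the incompressibility of the tori (Claim~\ref{cl:Vi-incompressible}) and irreducibility of the complement to eliminate the inessential intersection curves, followed by the fact that incompressible properly embedded annuli in a hyperbolic knot exterior are boundary-parallel (the paper cites \cite[Lemma\,15.26]{BZ06}), applied to $S^3\setminus \operatorname{int}(V_2)$, to push the remaining annuli off $\partial V_2$. You actually supply slightly more detail than the paper (the explicit incompressibility check for the annuli and the innermost product region); the only slip is cosmetic --- the curves removed in your innermost-disk move are those inside $D$, not inside $D'$, whose interior is disjoint from $\partial V_1$ by the innermost choice.
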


It may be assumed that $\dd V_1$ intersects $\dd V_2$ transversely in simple closed curves. 
If the intersection $\dd V_1\cap \dd V_2$ contains a curve that is inessential in $\dd V_2$, 
let $C$ be an innermost of such curves and let $d$ be the open disk in $\dd V_2\sm \dd V_1$ bounded by~$C$.
Then $C$ is inessential in~$\dd V_1$ because $\dd V_1$ is incompressible in~$S^3\sm K$ (Claim~\ref{cl:Vi-incompressible}).
Let $\delta$ be the open disk in~$\dd V_1$ bounded by~$C$ ($\delta$~may intersect $\dd V_2$).
Then the sphere $d\cup\delta\cup C$ bounds a ball (say,~$B$) in $S^3\sm K$. 
We have $B\cap \dd V_1 = \delta\cup C$.
It follows that we can eliminate $C$ (together with $\delta\cap \dd V_2$, if nonempty) by an isotopy of $\dd V_1$ in a neighborhood of~$B$.
Therefore, we can eliminate all components of $\dd V_1\cap \dd V_2$ that are inessential in~${\dd V_2}$.
The remaining curves of $\dd V_1\cap \dd V_2$ are essential in $\dd V_1$ as well.
(For if $C$ is an innermost of inessential curves from $\dd V_1\cap \dd V_2$ on $\dd V_1$,
then $C$ is inessential in~$\dd V_2$ because $\dd V_2$ is incompressible in~$S^3\sm K$ by Claim~\ref{cl:Vi-incompressible}.)
Now, if $\dd V_1\cap \dd V_2$ is still nonempty, the space $\dd V_1\sm \dd V_2$ is a collection of annuli.
It~is known that every incompressible properly embedded annulus in the closure of the complement of a hyperbolic knot is boundary parallel (see, e.\,g., \cite[Lemma\,15.26]{BZ06}). 
Applying this to the space $S^3\sm \inr(V_2)$,
we see that there exists an isotopy of $\dd V_1$ in $S^3\sm K$ moving $\dd V_1$ in $S^3\sm \dd V_2$.
Claim~\ref{cl:iso-dd} is proved.

\smallskip

The classical Isotopy Extension Theorem (for smooth manifolds) says that  
if $A$ is a compact submanifold of a manifold~$M$ and $F\colon A\times I\to M$ is an isotopy of~$A$ with $F(A\times I)\subset \inr(M)$,
then $F$ extends to an ambient isotopy (i.\,e., a diffeotopy of $M$) having compact support (see, e.\,g., \cite[p.\,179]{Hir76}).
Applying this theorem to the isotopy of~$\dd V_1$ in $S^3\sm K$ from Claim~\ref{cl:iso-dd} yields the following.

\begin{claim}
There exists an ambient isotopy of $S^3$, fixing $K$ pointwise, that moves $V_1$ to a position in which $\dd V_1\cap \dd V_2=\emptyset$.
\end{claim}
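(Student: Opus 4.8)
The plan is to obtain the claim directly from the Isotopy Extension Theorem just recalled, applied to the isotopy of $\dd V_1$ furnished by Claim~\ref{cl:iso-dd}, and then to promote the resulting ambient isotopy of $S^3\sm K$ to one of all of~$S^3$. First I would verify the hypotheses. Set $M=S^3\sm K$; this is a smooth \emph{open} manifold, so $\inr(M)=M$. Because $K$ lies in the interior of~$V_1$, the torus $\dd V_1$ is disjoint from~$K$ and is thus a compact submanifold of~$M$. The isotopy $F\colon \dd V_1\times I\to M$ supplied by Claim~\ref{cl:iso-dd} starts at the inclusion, keeps $\dd V_1$ inside $S^3\sm K$ throughout, and has compact image $F(\dd V_1\times I)\subset\inr(M)$. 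Hence the hypotheses of the Isotopy Extension Theorem are met.

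Next I would invoke that theorem to obtain a compactly supported diffeotopy $\Phi\colon M\times I\to M$ with $\Phi_0=\mathrm{id}_M$ and $\Phi_t|_{\dd V_1}=F_t$ for every~$t$; in particular $\Phi_1(\dd V_1)=F_1(\dd V_1)$ is disjoint from~$\dd V_2$.

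The one delicate point—and the step I expect to require the most care—is extending $\Phi$ across the deleted knot~$K$ so as to produce a genuine ambient isotopy of $S^3$ that fixes $K$ pointwise. Here I would exploit that $\Phi$ has \emph{compact} support in the open manifold $M=S^3\sm K$: there is a compact set $C\subset M$ with $\Phi_t=\mathrm{id}$ off~$C$ for all~$t$. Since $C$ is compact in $M$ it is a closed subset of~$S^3$ disjoint from the compact set~$K$, so the two can be separated by an open neighbourhood $N$ of~$K$ in~$S^3$ with $N\cap C=\emptyset$; consequently $\Phi_t$ is the identity on $N\sm K$. Extending $\Phi_t$ across $K$ by the identity then yields a map $\hat\Phi_t\colon S^3\to S^3$ that is literally the identity on all of~$N$ (hence smooth there) and equals $\Phi_t$ elsewhere, so $\hat\Phi$ is a smooth ambient isotopy of $S^3$ with $\hat\Phi_0=\mathrm{id}$ and $\hat\Phi_t|_K=\mathrm{id}_K$ for every~$t$. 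Finally, $\hat\Phi_1$ is a self-homeomorphism of $S^3$ fixing~$K$, so $\hat\Phi_1(V_1)$ is again a solid torus, and its boundary $\dd\bigl(\hat\Phi_1(V_1)\bigr)=\hat\Phi_1(\dd V_1)=F_1(\dd V_1)$ is disjoint from~$\dd V_2$. This is exactly the asserted conclusion.
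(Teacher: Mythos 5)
Your proposal is correct and follows essentially the same route as the paper: the paper likewise obtains the claim by applying the smooth Isotopy Extension Theorem (with compact support) to the isotopy of $\dd V_1$ in $S^3\sm K$ from Claim~\ref{cl:iso-dd}. Your extra step of extending the compactly supported diffeotopy of $S^3\sm K$ by the identity across $K$ is exactly the point the paper leaves implicit, and you justify it correctly via compactness of the support.
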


Thus, we can assume without loss of generality that $\dd V_1 \cap \dd V_2 = \emptyset$ (while $V_1$ and $V_2$ satisfy all properties listed at the beginning of the proof).
Now, let $M_1$ and $M_2$ denote the closures of the complements $S^3\sm V_1$ and $S^3\sm V_2$ respectively.

\begin{claim}
\label{cl:disj}
$M_1$ and $M_2$ are disjoint.
\end{claim}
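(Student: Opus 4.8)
The plan is to exploit the fact that two disjoint tori in $S^3$, each bounding a solid torus, can be nested in only a few ways, and to eliminate every configuration except the one that yields $M_1\cap M_2=\emptyset$. Since $\dd V_1\cap\dd V_2=\emptyset$, the torus $\dd V_1$ lies in one of the two components of $S^3\sm\dd V_2$, namely $\inr(V_2)$ or $\inr(M_2)$, and symmetrically for $\dd V_2$. First I would check that the four resulting combinations correspond precisely to the four configurations $V_1\cap V_2=\emptyset$, $V_1\subset\inr(V_2)$, $V_2\subset\inr(V_1)$, and $M_1\cap M_2=\emptyset$; this is a routine connectedness argument. (For instance, in the last combination $\dd V_1\subset\inr(V_2)$ gives $M_2\subset\inr(V_1)\sqcup\inr(M_1)$, and since $\dd M_2=\dd V_2\subset\inr(V_1)$ the connected set $M_2$ must lie in $\inr(V_1)$, whence $M_1\cap M_2=\emptyset$.) It then suffices to rule out the first three.

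The configuration $V_1\cap V_2=\emptyset$ is immediately impossible, since $K$ lies in $\inr(V_1)\cap\inr(V_2)$. The real work is to exclude the nested configurations, say $V_1\subset\inr(V_2)$ (the case $V_2\subset\inr(V_1)$ being symmetric), and I expect this to be the main obstacle. Let $c_1$ be a core of $V_1$, so that $c_1$ is isotopic to the hyperbolic companion knot of which $V_1$ is a tubular neighbourhood, and let $p$ denote the winding number of $c_1$ in $V_2$. Since the inclusion $V_1\hookrightarrow V_2$ induces multiplication by $p$ on first homology, the winding number of $K$ in $V_2$ equals $p$ times the winding number of $K$ in $V_1$; as $K$ has winding number at least $2$ in $V_2$, this forces $p\ge 1$.

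With $p\ge 1$ the core $c_1$ is not contained in a ball of $V_2$, and $\dd V_2$ is a knotted incompressible companion torus (its core being hyperbolic, hence non-trivial). The argument then splits according to whether $c_1$ is isotopic to the core of $V_2$ inside $V_2$. If it is \emph{not}, then $\dd V_2$ exhibits $c_1$ as a satellite knot in the sense of the definition recalled above, contradicting the fact that the companion of $V_1$ is hyperbolic and therefore (by Thurston) non-satellite. If it \emph{is}, then $V_1$ is parallel to $V_2$: the region $V_2\sm\inr(V_1)$ is a product $T^2\times I$, and the pairs $(V_1,K)$ and $(V_2,K)$ are homeomorphic by a homeomorphism fixing $K$ pointwise and carrying longitude to longitude. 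Composing with the re-embeddings $\phi_1,\phi_2$ onto unknotted solid tori, which are unique up to longitude-preserving ambient isotopy of $S^3$, then identifies $H_1=\phi_1(K)$ with $H_2=\phi_2(K)$, contradicting the standing assumption $H_1\neq H_2$.

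The delicate point I would argue with care is exactly this last dichotomy: that $p\ge 1$ together with $c_1$ not isotopic to the core genuinely produces a satellite structure on a hyperbolic knot, and that the remaining ``parallel'' alternative propagates through $\phi_1$ and $\phi_2$ to force $H_1=H_2$ rather than merely an isotopy of the two companion knots. Once both nested configurations are excluded, only $M_1\cap M_2=\emptyset$ survives, which is the assertion of the claim.
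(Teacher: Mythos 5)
Your proof is correct, and it takes a genuinely different route from the paper's. The paper never looks inside the solid tori: having already established that $\dd V_1$ and $\dd V_2$ are incompressible in $S^3\sm K$ (its Claim~1), it first proves a separate claim (Claim~5: the two tori are not isotopic in $S^3\sm K$) via exactly the longitude-preserving Dehn-filling rigidity you use, and then argues that $\dd V_2\subset M_1$ (or $\dd V_1\subset M_2$) is impossible because an incompressible torus in the hyperbolic knot exterior $M_1$ is boundary-parallel by Thurston's hyperbolization, while parallelism would contradict Claim~5; the only surviving configuration is your fourth case. You instead rule out the nested configurations from inside: winding-number multiplicativity gives the inner core $c_1$ nonzero winding in the outer torus, so by the satellite definition either $c_1$ is a satellite --- contradicting, via Thurston, that $c_1$ is hyperbolic --- or $c_1$ is a core of the outer torus, and in that parallel case your collar-collapse argument reproves the content of the paper's Claim~5. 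In substance both proofs consume the same two non-elementary inputs, Thurston's hyperbolization (yours as ``hyperbolic knots are not satellites'' applied to the cores, the paper's as ``essential tori in hyperbolic exteriors are boundary-parallel'' applied in the exteriors) and the Dehn-filling argument forcing $H_1=H_2$ from a product region; your version buys independence from the incompressibility claim and from the classification of essential tori, at the cost of redoing the filling rigidity inside the case analysis, whereas the paper's is shorter given its Claims~1 and~5 already in hand. One small inaccuracy: your assertion that the re-embeddings $\phi_i$ are ``unique up to longitude-preserving ambient isotopy of $S^3$'' is stronger than what you need and than what you actually use; the relevant fact, as the paper phrases it, is merely that $(S^3,\phi_i(K))$ is the Dehn filling of $(V_i,K)$ along the longitude $\ell_i$ and hence depends only on the triple $(V_i,K,\ell_i)$.
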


In order to prove Claim~\ref{cl:disj}, we need the following assertion.

\begin{claim}
\label{cl:isotV1V2j}
There is no isotopy between $\dd V_1$ and $\dd V_2$ in $S^3\sm K$.
\end{claim}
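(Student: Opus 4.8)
The plan is to argue by contradiction against the standing hypothesis that $H_1$ and $H_2$ are distinct. Suppose that $\dd V_1$ and $\dd V_2$ are isotopic in $S^3\sm K$. By the Isotopy Extension Theorem (already invoked above), this isotopy extends to an ambient isotopy of $S^3$ fixing $K$ pointwise, whose terminal homeomorphism $g$ satisfies $g(\dd V_1)=\dd V_2$. The solid torus $g(V_1)$ is then bounded by $\dd V_2$, so it is the closure of one of the two components of $S^3\sm\dd V_2$, namely $V_2$ or $M_2$. Since $V_2$ is a tubular neighbourhood of a hyperbolic (hence nontrivial) knot, $M_2$ is a nontrivial knot exterior and not a solid torus, whereas $g(V_1)$ is a solid torus. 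Therefore $g(V_1)=V_2$, and because $g$ fixes $K$ we obtain a homeomorphism of pairs $g\colon(V_1,K)\to(V_2,K)$ that is the restriction of a self-homeomorphism of $S^3$.

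First I would track the longitude framing. Since $g$ is a homeomorphism of $S^3$ carrying $V_1$ to $V_2$, it carries $M_1$ to $M_2$, and hence sends the longitude $\ell_1$ of $V_1$ (characterised as the curve on $\dd V_1$ that is null-homologous in $M_1$) to the longitude $\ell_2$ of $V_2$. Consequently both $\phi_1$ and $\phi_2\circ g$ are embeddings of $V_1$ into $S^3$ with \emph{unknotted} image that send $\ell_1$ to the Seifert longitude of that image; that is, they are two zero-framed, unknotted embeddings of one and the same pattern $(V_1,K,\ell_1)$. Moreover $(\phi_2\circ g)(K)=\phi_2(K)=H_2$ while $\phi_1(K)=H_1$, so it remains only to see that such an embedding determines the knot type of its image.

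The last step is to promote the boundary identification to a homeomorphism of pairs. Set $h=(\phi_2\circ g)\circ\phi_1^{-1}\colon\phi_1(V_1)\to\phi_2(V_2)$. On the boundary torus $h$ sends meridian to meridian and $\ell_1$ to $\ell_2$, i.e.\ it preserves the zero-framing. Writing each complementary region $\clos(S^3\sm\phi_i(V_i))$ as an unknotted solid torus, whose meridian is the longitude of $\phi_i(V_i)$, we see that $h$ restricted to $\dd$ also sends meridian to meridian of these complementary solid tori, and therefore extends over them; glueing the extensions to $h$ yields a homeomorphism $H\colon S^3\to S^3$ with $H|_{\phi_1(V_1)}=h$. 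Then $H(H_1)=H(\phi_1(K))=(\phi_2\circ g)(K)=H_2$, so $(S^3,H_1)\cong(S^3,H_2)$, contradicting $H_1\ne H_2$. The hard part will be precisely the framing bookkeeping in this last step: without the longitude-to-longitude hypothesis on $\phi_1,\phi_2$ and the homological naturality of $\ell_i$ under $g$, the two embeddings of the pattern could differ by meridional Dehn twists and (as the winding number is at least $2$) yield genuinely different satellites, so controlling the framing is exactly what makes the complementary extension---and hence the pair-homeomorphism---exist. I note that, in keeping with the paper's convention that a knot is a homeomorphism class of pairs, a homeomorphism of pairs already suffices to conclude $H_1=H_2$, so no orientation considerations need enter.
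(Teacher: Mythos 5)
Your proof is correct and follows essentially the same route as the paper's: extend the isotopy to an ambient homeomorphism of $S^3$ fixing $K$ and carrying the triple $(V_1,K,\ell_1)$ to $(V_2,K,\ell_2)$, then extend over the complementary solid torus (the paper phrases this step as a Dehn filling along $\ell_i$) to get a homeomorphism of pairs $(S^3,H_1)\cong(S^3,H_2)$, contradicting $H_1\neq H_2$. You additionally make explicit two points the paper glosses over---that $g(V_1)$ must equal $V_2$ rather than the knot exterior $M_2$, and that $g(\ell_1)=\ell_2$ via the homological characterisation of the longitude---which is welcome extra care but not a different argument.
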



Suppose to the contrary that such an isotopy exists.
Then the Isotopy Extension Theorem (see above) implies that
there exists an ambient isotopy of $S^3$, fixing $K$ pointwise, that moves $\dd V_1$ to $\dd V_2$.
This yields an isotopy between $V_1$ and $V_2$ that fixes $K$ pointwise.
Then the triples $(V_1,K,\ell_1)$ and $(V_2,K,\ell_2)$, where $\ell_i$ is a longitude of~$V_i$, $i=1,2$, are homeomorphic, i.\,e.,
there exists a homeomorphism $\tau\colon V_1\to V_2$ such that $\tau(K)=K$ and $\tau(\ell_1)=\ell_2$.
This implies that the pairs $(S^3,\phi_1(K))$ and $(S^3,\phi_2(K))$ are homeomorphic. 
Indeed, we observe that $(S^3,\phi_i(K))$ is obtained from $(V_i, K)$ by a \emph{Dehn filling} along $\ell_i$, that is, 
$(S^3,\phi_i(K))$ is obtained by attaching a solid torus $V$ to $V_i$ by a gluing homeomorphism $\sigma_i\colon \dd V\to \dd V_i$ such that $\sigma_i^{-1}(\ell_i)$ bounds a meridional disk of~$V$.  
Thus, the homeomorphism $\tau\colon V_1\to V_2$ extends to a homeomorphism $S^3\to S^3$ that maps $\phi_1(K)$ to $\phi_2(K)$.
This means that the knots $H_1$ and $H_2$ are equivalent because we have $\phi_i(K)=H_i$ by construction.
This contradicts the assumption that $H_1$ and $H_2$ are distinct. Claim~\ref{cl:isotV1V2j} is proved.

Now, we pass to the proof of Claim~\ref{cl:disj}.
Observe that neither $M_1$ contains $\dd M_2=\dd V_2$ nor $M_2$ contains $\dd M_1=\dd V_1$ because an incompressible torus in a hyperbolic knot complement is boundary parallel by Thurston’s hyperbolization theorem, while $\dd M_1=\dd V_1$ and $\dd M_2=\dd V_2$ are not parallel by Claim~\ref{cl:isotV1V2j}.
Obviously, this implies that $M_1$ and $M_2$ are disjoint.

Another fact that we need is implied by the following proposition.

\begin{prop}
\label{prop:Budney}
Let $C_1$, $C_2$, \dots, $C_n$ be $n$ disjoint submanifolds of $S^3$ 
such that for all $i \in \{1, 2, \dots , n\}$, $K_i =\clos(S^3 \sm C_i)$ is a non-trivially embedded solid-torus in~$S^3$. 
Then there exists n disjointly embedded 3-balls $B_1$, $B_2$, \dots, $B_n \subset S^3$ such that $C_i \subset B_i$ for all $i \in \{1, 2, \dots , n\}$. 
Moreover, each $B_i$ can be chosen to be $C_i$ union a $2$-handle which is a tubular neighbourhood of a meridional disk for $K_i$.
\end{prop}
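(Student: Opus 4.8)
The plan is to treat both clauses at once by reformulating everything in terms of meridional disks. Write $K_i=\clos(S^3\sm C_i)$ for the given knotted solid tori and set $R:=S^3\sm\bigcup_i\inr(C_i)$. Since the $C_i$ are pairwise disjoint, each $C_j$ with $j\ne i$ lies in $\inr(K_i)$, and $R$ is a compact submanifold whose boundary is the disjoint union of the tori $T_i:=\dd C_i=\dd K_i$; note that $C_j\cap R=T_j$ for every $j$. Everything reduces to the following claim, which I will call the Key Lemma: there exist pairwise disjoint properly embedded disks $D_1,\dots,D_n\subset R$ such that $\dd D_i$ is a meridian of the solid torus $K_i$ on $T_i$.

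Granting the Key Lemma, I would finish quickly. For each $i$ choose a thin tubular neighbourhood $h_i\subset R$ of $D_i$; since $\inr D_i\subset\inr R$ and $\dd D_i\subset T_i$, the handle $h_i$ can be taken pairwise disjoint from the other handles and disjoint from every $C_j$ with $j\ne i$ (as $C_j$ meets $R$ only along $T_j$). Put $B_i:=C_i\cup h_i$. This is precisely the $2$-handle attachment along the meridian $\dd D_i$ demanded by the ``moreover'' clause. Cutting the solid torus $K_i$ along the meridional disk $D_i$ yields a $3$-ball $\beta_i=\clos(K_i\sm h_i)$, and $S^3=B_i\cup\beta_i$ with $B_i\cap\beta_i=\dd\beta_i\cong S^2$; hence $B_i=\clos(S^3\sm\beta_i)$ is the complement of a ball and is therefore itself a $3$-ball by the Schoenflies theorem. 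The balls $B_i$ are pairwise disjoint because the $C_i$ are disjoint and the handles were chosen disjoint from everything else, and $C_i\subset B_i$ by construction, which is exactly the assertion.

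Thus the whole difficulty sits in the Key Lemma, and the main obstacle is to control how a meridional disk of $K_i$ meets the companion tori $T_j$. I would first produce, for each fixed $i$, a single meridional disk of $K_i$ disjoint from all $C_j$ with $j\ne i$, and only afterwards make the family disjoint. Starting from any meridional disk of $K_i$ in general position with respect to $\bigcup_{j\ne i}T_j$, the intersection is a union of circles, and I would remove them by an innermost-disk argument. An innermost subdisk $d$ has interior lying in a single complementary region, namely $\inr C_j$ or $R$. If $\dd d$ is inessential on $T_j$, it is eliminated by an isotopy across the subdisk it bounds on $T_j$, using irreducibility of $S^3$ and of the knot exterior $C_j$. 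The genuinely delicate case is an intersection circle that is essential on some $T_j$ with $\inr d\subset R$: then $\dd d$ bounds the disk $d$ in $R\subset K_j$, so $\dd d$ must be a meridian of the solid torus $K_j$ and $d$ is itself a meridional disk of $K_j$ lying in $R$.

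This essential case is the technical heart, and it is where the incompressibility of knot-exterior boundaries does the work; I expect to write it out in full. The structural inputs are that an incompressible torus in a solid torus is boundary-parallel and that each $C_j$ is a nontrivial knot exterior (so $T_j$ is incompressible from the $C_j$ side, while $C_j$ is not itself a solid torus). Together with disjointness of the $C_j$, these rule out $T_j$ being incompressible inside $K_i$ and instead force each companion exterior into a $3$-ball in $\inr K_i$. I would then organise the argument as an induction on $n$: once a meridional disk $D_n\subset R$ for $K_n$ is found, the ball $B_n=C_n\cup h_n$ engulfs $C_n$ and is disjoint from $C_1,\dots,C_{n-1}$, so its complementary ball contains the remaining $n-1$ exteriors and the inductive hypothesis applies there. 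A final round of innermost-disk surgery, legitimate because the boundaries $\dd D_i$ lie on distinct tori so the disks meet only in circles and never in arcs, makes $D_1,\dots,D_n$ pairwise disjoint in $R$. The one place demanding real care is the termination of this process in the presence of the essential intersection curves, where a careless innermost induction can loop.
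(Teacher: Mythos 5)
Your overall architecture is sound and, for what it is worth, goes beyond the paper, which offers no argument of its own here but simply cites \cite[Proposition~2.1]{Bud06}. The reduction to your Key Lemma is correct, the $2$-handle construction and the Schoenflies finish are fine, and the final disjointification of the disks $D_1,\dots,D_n$ works as you say (the intersections are circles, and an innermost-over-all swap strictly decreases the total number of intersection circles). One small repair in the inessential case: the ball bounded by $d\cup d'$ may contain other exteriors $C_l$, so the ``isotopy across the subdisk'' you invoke can drag $D$ through them; you should instead perform a disk swap across an innermost-on-$T_j$ disk $d'\subset T_j$ (replace the subdisk of $D$ bounded by $\dd d'$ with a push-off of $d'$), which needs no ball at all and never creates intersections with the other tori.

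The genuine gap is exactly the place you flag and then leave open: the essential case defeats your stated strategy of first producing, for each \emph{fixed} $i$, a meridional disk of $K_i$ avoiding all $C_j$. When an innermost subdisk $d\subset D$ has $\dd d$ essential on $T_j$ with $\inr d\subset \inr R$, there is no local move along $T_j$ that removes this circle (an essential curve bounds no disk on the torus), and you supply no termination mechanism --- so the ``fixed $i$'' claim is not established by the induction you describe. The missing idea is to treat this case as success for the index $j$ rather than failure for the index $i$: your own analysis shows $d$ is a meridional disk of $K_j$ lying in $R$, so the cleaning process always hands \emph{some} index a clean disk; engulf that $C_j$ in the ball $B_j=C_j\cup h_j$, and then make $D$ (and every later candidate disk) disjoint from the sphere $\dd B_j$ by innermost swaps --- legitimate since every circle on a sphere is inessential and $\dd B_j$ misses all other tori --- which forces $D$ into the complementary ball $\beta_j$ and hence off $C_j$ entirely. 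Relatedly, your inductive step ``the inductive hypothesis applies there'' is not literally applicable: the proposition as stated yields balls in $S^3$ that need not lie inside $\beta_n$, so you must strengthen the induction to a relative statement (given a ball $\beta$ with the exteriors in its interior, all the $B_i$ can be found inside $\beta$), proved by the same sphere-swap trick applied to $\dd\beta$. With these two repairs the proof closes; without them, as written, it does not.
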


\begin{proof}
See {\cite[Proposition\,2.1]{Bud06}} and references therein for earlier proofs.
\end{proof}

Applying Proposition~\ref{prop:Budney} to $M_1$ and $M_2$, we obtain the following claim.

\begin{claim}
There exists a meridional disk $D_2$ for $V_2$ such that $D_2 \subset V_1$.
\end{claim}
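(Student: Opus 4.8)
The plan is to apply Proposition~\ref{prop:Budney} to the complementary manifolds $M_1$ and $M_2$ and then read off the required disk from its ``moreover'' clause. First I would check the hypotheses of Proposition~\ref{prop:Budney} with $n=2$, $C_1=M_1$, and $C_2=M_2$. By the claim just proved, $M_1$ and $M_2$ are disjoint submanifolds of $S^3$. Moreover, for each $i$ we have $\clos(S^3\sm M_i)=V_i$, and $V_i$ is a tubular neighbourhood of a hyperbolic (hence non-trivial) knot, so $K_i:=\clos(S^3\sm C_i)=V_i$ is a non-trivially embedded solid torus. Thus Proposition~\ref{prop:Budney} applies.

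It will then produce disjointly embedded $3$-balls $B_1,B_2\subset S^3$ with $M_i\subset B_i$, and the ``moreover'' clause will allow me to take $B_2=M_2\cup H$, where $H$ is a $2$-handle that is a tubular neighbourhood of a meridional disk $D_2$ for $K_2=V_2$. In particular $D_2$ is a meridional disk for $V_2$ with $D_2\subset H\subset B_2$, so it remains only to place $B_2$ inside $V_1$. Since $B_1$ and $B_2$ are disjoint and $M_1\subset B_1$, we get $B_2\cap M_1=\emptyset$, i.e. $B_2\subset S^3\sm M_1$. Now $M_1=\clos(S^3\sm V_1)=S^3\sm\inr(V_1)$, hence $S^3\sm M_1=\inr(V_1)$, and therefore $B_2\subset\inr(V_1)\subset V_1$. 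This gives $D_2\subset V_1$, as required.

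The calculation is short once Proposition~\ref{prop:Budney} is in hand, so the main care lies in setting it up correctly, and that is exactly where I expect the only real difficulty. The point most easily slipped is the bookkeeping of which solid torus plays which role: Proposition~\ref{prop:Budney} must be applied to the knot exteriors $C_i=M_i$, whose complementary solid tori $K_i$ are the $V_i$, rather than to the $V_i$ directly; and one must invoke the hyperbolicity of the companion knots precisely to know that the $V_i$ are non-trivially embedded. The genuinely useful content is carried by the ``moreover'' clause, since without it $B_2$ would be merely some ball containing $M_2$, whereas I need a ball whose attached $2$-handle supplies an honest meridional disk of $V_2$; the identity $S^3\sm M_1=\inr(V_1)$ then delivers this disk into $V_1$.
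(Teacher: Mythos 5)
Your proof is correct and is essentially the paper's own argument: the paper obtains this claim in one line by applying Proposition~\ref{prop:Budney} to $M_1$ and $M_2$ (with $C_i=M_i$ and $K_i=V_i$), exactly as you set it up. You have simply made explicit the bookkeeping the paper leaves tacit --- verifying the hypotheses via Claim~\ref{cl:disj} and the non-triviality of the $V_i$, invoking the ``moreover'' clause for the meridional disk, and using $S^3\sm M_1=\inr(V_1)$ to place $B_2$, hence $D_2$, inside $V_1$.
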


Now, since we have $M_2\subset V_1$ (Claim~\ref{cl:disj}), the image $\phi_1(M_2)$ is well defined.
We consider the complement $W:=S^3\sm \phi_1 (\inr(M_2))$.
Due to Alexander's theorem on embedded torus in~$S^3$, we observe that $W$ is a knotted solid torus because we know that the boundary $\dd W= \dd \phi_1(M_2) = \phi_1(\dd M_2)=\phi_1(\dd V_2)$ is a torus, while the complement $S^3\sm W=\phi_1 (\inr(M_2))$ is homeomorphic to $\inr(M_2)$, which is the complement of the knotted solid torus~$V_2$. (Of course, by the Gordon--Luecke theorem we know, moreover, that $W$ is a tubular neighbourhood of a hyperbolic knot.)
We see that $W$ contains $\phi_1(K)$ by construction.
Finally, we see that the winding number of $\phi_1(K)$ in~$W$ is equal to the winding number of $K$ in~$V_2$ because there exists a meridional disk $D_2$ for $V_2$ such that $D_2 \subset V_1$ so that $\phi_1$ maps $D_2$ to a meridional disk of~$W$. 
Therefore, $\phi_1(K)$ is contained in a knotted solid torus~$W$ and the winding number of $\phi_1(K)$ in~$W$ is at least~$2$.
This means that $\phi_1(K)$ is a satellite knot. 
Since we have $H_1=\phi_1(K)$, this contradicts the assumption that~$H_1$ is not a satellite knot.
This contradiction completes the proof of assertion~(ii) of Proposition~\ref{lem:main}.

\section{A combinatorial lemma}
\label{sec:combinatorial_lemma}

The present section contains a lemma which is used in the proof of assertion (iii) of Proposition~\ref{lem:main}.

\subsection*{Definitions}
Let $K$ be a knot in the $3$-sphere $S^3=\R^3\cup\{\infty\}$,  
and let $D\subset S^2$ be a projection of~$K$ on a $2$-sphere $S^2=\R^2\cup\{\infty\}$ in~$S^3$. 
A knot projection is said to be \emph{regular} if its only singularities are transversal double points.
If $D$ is a regular knot projection, an \emph{edge} in~$D$ is the closure of a component of the set $D\sm V$, where $V$ is the set of double points of~$D$.
We say that two edges $I$ and $J$ of~$D$ are \emph{neighboring edges} or \emph{neighbors}
if there exists a component $Q$ of $S^2\sm D$ such that the boundary $\dd Q$ contains both $I$ and $J$.
We say that two edges $I$ and $J$ of~$D$ are \emph{consecutive} if the union $I\cup J$ is the image of a (connected) arc of the knot. 
We will denote by $\rho$ the maximal metric on the set $E(D)$ of edges of~$D$ in the class of metrics satisfying the condition 
$$
\rho(I,J)=1\quad\text{if $I$ and $J$ are consecutive edges of~$D$.}
$$

\begin{lem}\label{lem:Berge-Coroll}
Any regular knot projection with $n>0$ double points has a pair of neighboring edges $I$ and $J$ with $\rho(I,J)\ge 2n/3$.
\end{lem}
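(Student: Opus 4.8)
The plan is to forget the topology as quickly as possible and work on the cyclic combinatorial model supplied by $\rho$. Since $K$ is a single closed curve with $n$ transversal double points, the projection $D$ has exactly $2n$ edges, and the \emph{consecutive} relation turns $E(D)$ into a single $2n$-cycle; hence $\rho$ is just the path metric of that cycle, so after labelling the edges $e_0,\dots,e_{2n-1}$ in the order in which $K$ runs through them one has $\rho(e_i,e_j)=\min(|i-j|,2n-|i-j|)\in\{0,\dots,n\}$. In this language the claim is that some face of $D$ carries two boundary edges at cyclic distance $\ge 2n/3$, i.e.\ that the edge set of some face spans a cyclic arc of length $\ge 2n/3$. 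I would argue by contradiction, assuming that for \emph{every} component $Q$ of $S^2\sm D$ all pairs of edges on $\dd Q$ lie at distance $<2n/3$.

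The first step is a purely metric observation on a circle of length $L$: if a finite set $S$ of points has all pairwise distances $<L/3$, then $S$ lies in an arc of length $<L/3$. I would prove this by examining the largest gap between consecutive points of $S$; were the complementary span $\ge L/3$, a short case analysis (according to whether that span exceeds $L/2$) produces two points of $S$ at distance $\ge L/3$, contradicting the hypothesis. Applying this with $L=2n$ to the edges of each $\dd Q$, the contradiction hypothesis forces the boundary edges of \emph{every} face to lie in an open arc of length $<2n/3$. Equivalently, after fixing a partition of the $2n$-cycle into three consecutive arcs $\sigma_1,\sigma_2,\sigma_3$ of length $\approx 2n/3$, every face of $D$ \emph{entirely misses} at least one $\sigma_i$, since the complement of an arc of length $<2n/3$ has length $>4n/3$ and therefore swallows a whole $\sigma_i$.

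The crux is to convert the constraint \emph{``every face omits a full third of $K$''} into a contradiction, and here the planar structure must finally enter; this is the step I expect to be the main obstacle. My intended tool is the checkerboard colouring of $S^2\sm D$: colouring faces black and white so that faces sharing an edge get opposite colours, each edge lies on exactly one white and one black face, so the white faces partition $E(D)$ and the black faces partition $E(D)$, each part of span $<2n/3$. I would then try to combine these two partitions with Euler's formula $V-E+F=2$ for the $4$-valent graph $D$ (here $V=n$, $E=2n$, hence $F=n+2$) and with the cyclic order, via a \emph{sweep}: follow the two faces flanking $K$ as a marked point traverses $K$ once, using that a single part of span $<2n/3=|\sigma_i|$ cannot cover a whole $\sigma_i$. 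A naive incidence count is visibly too weak (it only yields $n\ge 4$), so the real work is to promote the local span bounds into a global obstruction respecting the cyclic order; it is precisely the splitting into three arcs that forces the constant $2/3$, and it is here that a combinatorial principle on the cyclic arrangement (of the sort found in Berge's work) is likely required.

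As a check that the mechanism must cover two rather different regimes, note the two extreme sources of the required far pair, both handled uniformly by the arc reformulation. At a single crossing whose two passages are at cyclic distance $d$ along $K$, the four quadrant faces pair the incident edges at distances $d-1,d,d,d+1$, so any diagram with a ``long chord'' ($d\ge 2n/3$) already displays the desired neighbours; at the opposite extreme, a diagram all of whose chords are short (for instance a chain of Reidemeister-I kinks) produces the far pair on a single large face that sweeps almost all the way around $K$. Any correct proof of the contradiction above must capture both regimes at once, which is why I would route everything through the span/arc formulation rather than through crossings alone.
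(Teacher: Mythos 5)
Your setup is sound: the cyclic model, the identification of $\rho$ with the path metric on the $2n$-cycle, the metric fact that a subset of a circle of length $L$ with all pairwise distances $<L/3$ lies in an arc of length $<L/3$, and the resulting reformulation (under your contradiction hypothesis, every face omits one of three fixed consecutive arcs $\sigma_1,\sigma_2,\sigma_3$ of length roughly $2n/3$) are all correct, and that reformulation is precisely the contrapositive of the paper's key step. But the proposal stops exactly at the decisive point, and you say so yourself: you never derive a contradiction from ``every face omits a full third,'' i.e.\ you never prove that some face (or crossing) must carry edges from all three thirds. The tools you gesture at --- checkerboard colouring, Euler's formula, a sweep along $K$ --- are never developed into an argument, and your own remark that a naive incidence count is too weak is accurate: the assertion genuinely depends on the \emph{connectivity} of the three chains of consecutive edges and on the simple connectivity of $S^2$, topological inputs that no Euler-characteristic bookkeeping detects. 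So this is a genuine gap, not a routine verification left to the reader.

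What the paper does at this point is the idea your sketch is missing. Let $D_i$ be the union of the edges in the $i$-th third, a compact connected subset of $D$ with $D=D_1\cup D_2\cup D_3$ and $D_i\cap D_j\neq\emptyset$ for all $i,j$. Embed $S^2$ smoothly in $\R^3$ and set $R_i=\{x\in S^2 : \dist(x,D_i)=\dist(x,D)\}$. Each $R_i$ is closed and connected (a geodesic from $x$ to a nearest point of $D_i$ stays in $R_i$ by the triangle inequality), the three sets cover $S^2$, and they pairwise intersect; the topological Helly theorem (Lemma~\ref{lem:Berge}: three pairwise intersecting closed connected sets covering a simply connected space have a common point) then yields $x\in R_1\cap R_2\cap R_3$. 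If $x$ is a double point of~$D$, the edges incident to $x$ supply $J_i\in E_i$ with a neighboring pair at distance at least $k=\lfloor 2n/3\rfloor$ or $k+1$; if $x$ lies in a face $Q$, the nearest points of $D$ to $x$ lie on $\dd Q$ away from the double points, so $\dd Q$ contains an edge from each third, giving three pairwise-neighboring edges of which some pair is at distance at least $\lceil 2n/3\rceil$. This proximity-region/Helly device is exactly the ``combinatorial principle on the cyclic arrangement'' you correctly predicted would be required; without it, or a substitute of comparable strength, your argument does not close.
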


\begin{proof}
Let $D\subset S^2$ be a regular knot projection with $n$ double points.
We consider the case with $n\ge2$ (the case $n=1$ is obvious).
Observe that $D$ has $2n$ edges. 
Put $k:=\lfloor 2n/3\rfloor$, the largest integer not greater than $2n/3$, and split the set $E(D)$ of edges of~$D$ in three parts, $E_1$, $E_2$, and $E_3$, such that each part is a chain of consecutive edges, two parts consist of $k$ edges each, and the third part consists of $2n-2k$ edges. (Note that $2n-2k\in\{k,k+1,k+2\}$; in particular, we have $2n-2k\ge k$, that is, each part consists of at least $k$ edges. No part is empty since we assume $n\ge 2$.)
Let $D_i\subset D$, $i=1,2,3$, be the union of edges from~$E_i$. 
Observe that each $D_i$ is compact and connected and $D=D_1\cup D_2\cup D_3$.
Let us smoothly embed $S^2$ in~$\R^3$ as a sphere of radius $1$ and let $\dist$ denote the metric on~$S^2$ induced by the euclidean metric in~$\R^3$.
For each $i\in\{1,2,3\}$ we set
$$
R_i:=\{x\in S^2 : \dist(x,D_i)=\dist(x,D)\}.
$$
Observe that $R_1\cup  R_2\cup R_3= S^2$ because $D=D_1\cup D_2\cup D_3$.
We see that for each $i\in\{1,2,3\}$ the set $R_i$ is closed because $D_i$ is compact (consider a convergent sequence of points in~$R_i$).
Also, we see that for each $i\in\{1,2,3\}$ the set $R_i$ is connected. 
Indeed, if $p\in R_i$, then due to compactness of $D_i$ there exists a point $q\in D_i$ such that $\dist(p,q)=\dist(p,D)$. 
Then the geodesic segment between $p$ and $q$ is in $R_i$ by the triangle inequality.
Therefore, $R_i$ is connected because $D_i$ is connected.
Finally, we see that for any $\{i,j\}\subset \{1,2,3\}$ the intersection $R_i\cap R_j$ is not empty because $D_i\subset R_i$ and $D_j\subset R_j$, while $D_i\cap D_j$ is not empty.

Thus, the sets $R_1$, $R_2$, and $R_3$ satisfy assumptions of Lemma~\ref{lem:Berge} below.
Lemma~\ref{lem:Berge} implies that $R_1$, $R_2$, and $R_3$ have a common point~$x$.
Clearly, $x$ is not an inner point of an edge of~$D$, so we have two possible cases:

1) $x$ is a double point of~$D$,

2) $x\notin D$. 

Suppose $x$ is a double point of~$D$. 
Then there exists a triple $\{J_1, J_2, J_3\}$ of edges of $D$ incident to~$x$ such that $J_i\in E_i$ for all $i\in\{1,2,3\}$. Without loss of generality we can and will assume that $J_1$ and $J_3$ are consecutive. 
Then $J_1$ and $J_2$ are neighbors, and $J_2$ and $J_3$ are neighbors.
It is easily seen that we have $\rho(J_1,J_2)\ge k$ and if $\rho(J_1,J_2)= k$ then $\rho(J_2,J_3)=k+1$, and the theorem follows.

Suppose $x\in S^2\sm D$. Let $Q$ be the component of $S^2\sm D$ containing~$x$. 
Observe that the set 
$$
\{y\in D : \dist (x,y)=\dist (x,D)\}
$$
is contained in $\dd Q \subset D$ and contains no double points of~$D$ (due to smoothness of embedding $S^2\to \R^3$). 
Therefore, since $x\in R_1\cap R_2\cap R_3$,  for each $i\in\{1,2,3\}$ the set $\dd Q\cap D_i$ contains at least one edge of~$D$. 
This means that there exists a triple $\{J_1, J_2, J_3\}$ of pairwise neighboring edges of $D$ such that we have $J_i\in E_i$ for all $i\in\{1,2,3\}$. 
It is an easy exercise to check that this triple contains a pair $\{I,J\}$ with\footnote{We use notation $\lceil 2n/3\rceil$ for the smallest integer not less than~$2n/3$.} 
$$\rho(I,J)\ge \lceil 2n/3\rceil\ge 2n/3.\qedhere$$
\end{proof}

\begin{lem}\label{lem:Berge}
If a triple of pairwise intersecting closed connected sets cover a simply connected space, then these three sets have a common point.
\end{lem}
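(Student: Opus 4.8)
The plan is to argue by contradiction and to reduce the statement to the \emph{unicoherence} of simply connected spaces. Denote the three sets by $A$, $B$, $C$: they are closed and connected, their union $X=A\cup B\cup C$ is simply connected, and $A\cap B$, $B\cap C$, $C\cap A$ are all nonempty. Suppose, contrary to the assertion, that $A\cap B\cap C=\emptyset$. The first observation is that $A\cap B$ and $A\cap C$ are then \emph{disjoint}, since a common point would lie in $A\cap B\cap C$. Put $P:=A$ and $Q:=B\cup C$. Because $B\cap C\neq\emptyset$ and $B$, $C$ are connected, $Q$ is connected, and $Q$ is closed as a union of two closed sets. Thus $X=P\cup Q$ with $P$, $Q$ closed and connected, whereas
\[
P\cap Q=(A\cap B)\cup(A\cap C)
\]
is a union of two disjoint nonempty closed sets, hence \emph{disconnected}.

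It therefore suffices to prove the following unicoherence property: a simply connected space cannot be written as a union of two closed connected subsets whose intersection is disconnected. I would deduce this by manufacturing, from a disconnected intersection, a continuous map $X\to S^1$ admitting no lift along the universal cover $\R\to S^1$; as $X$ is simply connected, that is the desired contradiction. Write $P\cap Q=E\sqcup F$ with $E$, $F$ disjoint, nonempty, and closed. Using normality of $X$ and Urysohn's lemma, choose $g_P\colon P\to[0,\tfrac12]$ with $g_P\equiv 0$ on $E$ and $g_P\equiv\tfrac12$ on $F$, and $g_Q\colon Q\to[\tfrac12,1]$ with $g_Q\equiv\tfrac12$ on $F$ and $g_Q\equiv 1$ on $E$. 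Reducing values modulo $1$ and noting $0\equiv 1$, the maps $g_P$ and $g_Q$ agree on $P\cap Q$, so they glue to a continuous map $f\colon X\to\R/\mathbb{Z}=S^1$.

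The heart of the argument is to verify that $f$ is essential. I claim $f$ admits no lift $\tilde f\colon X\to\R$. Indeed, $g_P$ is itself a lift of $f|_P$, so on the connected set $P$ any lift differs from $g_P$ by a constant integer; hence $\tilde f|_P=g_P+m$ for some $m\in\mathbb{Z}$, and likewise $\tilde f|_Q=g_Q+n$ for some $n\in\mathbb{Z}$. Comparing the two expressions on $F$ gives $\tfrac12+m=\tfrac12+n$, that is $m=n$, while comparing them on $E$ gives $m=1+n$; these are incompatible, so no lift exists. On the other hand, every map from a simply connected space to $S^1$ does lift (by the lifting criterion, since $f_*\pi_1(X)=0$). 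This contradiction proves unicoherence, and with it the lemma.

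I expect the unicoherence step to be the main obstacle: the reduction in the first paragraph is formal, but passing from a disconnected intersection to a genuinely essential circle-valued map is where the topology lives. The construction succeeds precisely because $g_P$ and $g_Q$ are arranged to \emph{wind oppositely} across $E$ and $F$, forcing the clash $m=n$ versus $m=n+1$ above. The only point-set hypotheses used are normality (for Urysohn's lemma) and the lifting criterion (path- and local path-connectedness); both hold for the spaces to which the lemma is applied, in particular for $X=S^2$, which is all that is needed here. Alternatively, one may simply invoke the classical fact that simply connected Peano continua are unicoherent.
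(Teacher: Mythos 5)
Your proof is correct, but it takes a genuinely different route from the paper, which disposes of this lemma in a single line by citing Theorem~5 (in the case $m=1$) of Bogatyi's topological Helly theorem \cite{Bog02}. You instead reduce the Helly-type statement to unicoherence---setting $P=A$ and $Q=B\cup C$, so that $P\cap Q=(A\cap B)\sqcup(A\cap C)$ is a separation into nonempty closed pieces---and then prove unicoherence directly: the two Urysohn functions glue (the pasting is legitimate since $P$ and $Q$ are closed and the functions agree modulo~$1$ on the overlap) into a circle-valued map that winds oppositely across $E$ and $F$, and the lift comparison ($m=n$ on $F$ versus $m=n+1$ on $E$) is exactly the standard obstruction, contradicting the lifting criterion. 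What your route buys is self-containedness: elementary covering-space theory replaces an external citation. What it costs is that ``simply connected'' alone does not literally suffice---you need normality for Urysohn's lemma and path- and local path-connectedness for the lifting criterion, as you yourself flag; since the lemma is applied in the paper only to $X=S^2$, this is harmless, but strictly speaking your argument proves the lemma under these extra standing hypotheses (equivalently, one may invoke the classical unicoherence of simply connected Peano continua, as you note). The citation to \cite{Bog02} delivers the statement at the stated level of generality, and its higher-$m$ cases, which your method would not reach as directly.
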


\begin{proof}
This follows, e.\,g., from Theorem~5 of \cite{Bog02} in the case $m=1$.
\end{proof}

\section{Tangles}
\label{sec:tangles}

Our proof of assertion (iii) of Proposition~\ref{lem:main} uses tangles.
The present section contains some preliminaries on tangles.

\subsection*{Definitions} 
A $k$-string \emph{tangle}, where $k\in\mathbb N$, is a pair $(B, t)$ where $B$ is a $3$-ball and $t$ is the union of $k$ disjoint arcs in $B$ with $t \cap \dd B = \dd t$. We mostly interested in the cases where $k\in\{1,2\}$.
Two tangles, $(B, t)$ and $(A, s)$, are \emph{equivalent} if there is a homeomorphism of pairs from $(B, t)$ to $(A, s)$. 
A~tangle $(B, t)$ is \emph{trivial} if $B$ contains a properly embedded disk containing~$t$.
A~tangle $(B, t)$ is \emph{locally knotted} 
if $B$ contains a ball $B'$ such that $(B',B'\cap t)$ is a nontrivial $1$-string tangle.
A~$2$-string tangle $(B, t)$ is \emph{prime} if it is neither locally knotted nor trivial.
If $(B, t)$ and $(A, s)$ are $k$-string tangles and $f\colon (\dd B, \dd t) \to (\dd A, \dd s)$ is a homeomorphism, 
a link in $S^3$ can be obtained by identifying the boundaries of the tangles using~$f$. 
The result, $(B, t)\cup_f (A, s)$, is referred to as a \emph{sum} of the two tangles. 
If $(B,t)$ is a $1$-string tangle and $(A,s)$ is the trivial $1$-string tangle, then there is a unique (up to a homeomorphism of pairs) knot which is a sum of $(B,t)$ and $(A,s)$. This knot is called the \emph{closure} of $(B,t)$. 
We say that a $2$-string tangle $(B, t)$ is a \emph{cable} tangle if 
there exists an embedding $f\colon I \times I \to B$ such that $f(I \times I)\cap \dd B= I\times \dd I$ and $t=f(\dd I\times I)$, where $I:=[0,1]$.
(We treat the trivial $2$-string tangle as a cable tangle.)
Clearly, each tangle $(B, t)$ can be embedded in $\mathbb{R}^3$ in such a way that $B$ becomes a Euclidean ball while the endpoints $\dd t$ lie on a great circle of this ball
and $t$ is in general position with respect to the projection onto the flat disc bounded by the great circle. 
The projection, with additional information of over- and undercrossings, then gives us a \emph{tangle diagram}. Examples of tangle diagrams are given in Figs.~\ref{fig:gamma}, \ref{fig:gamma-tref}, and~\ref{fig:Reid+gamma}.

\begin{thm}[{\cite[Theorem\,1]{Lick81}}]\label{thm:sum-prime}
A sum of two $2$-string prime tangles is a prime link.
\end{thm}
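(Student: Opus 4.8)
The plan is to work with the Conway sphere $S=\dd B_1=\dd B_2$ of the tangle sum $L=(B_1,t_1)\cup_f(B_2,t_2)$, which meets $L$ transversely in the four points $\dd t_1=\dd t_2$. Writing $X_i$ for the exterior of the tangle $(B_i,t_i)$ and $P=S\sm\mathcal N(L)$ for the four-punctured sphere it cuts in the link exterior $X=X_1\cup_P X_2$, the whole argument will rest on one key lemma: \emph{for a prime tangle, $P$ is incompressible in $X_i$}. To prove this I would take a hypothetical compressing disc, whose boundary is an essential simple closed curve on the four-punctured sphere and hence separates the four punctures into two pairs; capping it off with a subdisc of $S$ produces a $2$-sphere in $B_i$ meeting $t_i$ in two points. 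The hypothesis that $(B_i,t_i)$ is not locally knotted forces this sphere to bound a ball meeting $t_i$ in an unknotted arc, and tracing through the positions of that arc relative to the compressing disc shows that the two strings of $t_i$ can be pulled apart, i.e.\ that $(B_i,t_i)$ is trivial---contrary to primeness. This is exactly where the defining clause ``not trivial'' is consumed. Along the way I would record that each $X_i$ is irreducible, which is immediate: a $2$-sphere in $\inr(B_i)$ disjoint from $t_i$ bounds a ball in $B_i$ that, being unable to reach $\dd B_i$, must miss $t_i$ entirely.

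With the lemma in hand I would verify the three requirements for $L$ to be prime---nonsplit, nontrivial, and not a nontrivial connected sum---by one uniform innermost-circle technique. Suppose first that $\Sigma$ is a splitting sphere; it is disjoint from $L$, so $\Sigma\cap S$ is a family of circles in $P$, and I choose $\Sigma$ in its class to minimise the number of these circles. An innermost one bounds a disc on $\Sigma$ lying in some $X_i$; by incompressibility of $P$ its boundary also bounds a disc on $P$, and an isotopy across the resulting ball (using irreducibility of $X_i$) removes the circle, contradicting minimality. Hence $\Sigma$ may be taken inside a single $X_i$, where irreducibility shows it bounds a ball, so $L$ is nonsplit. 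The same normalisation applied to a compressing disc for $P$ in $X$ upgrades the lemma to: $P$ is incompressible in $X$; since $\pi_1(P)$ is non-abelian while the exterior of the unknot is a solid torus, this together with nonsplitness rules out $L$ being trivial.

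For the connected-sum clause I would take a $2$-sphere $\Sigma$ meeting $L$ transversely in two points and again minimise $\card(\Sigma\cap S)$, arranging the two punctures of $\Sigma$ to lie off $S$. The circles of $\Sigma\cap S$ now split into two kinds according to whether the innermost disc they cut off on $\Sigma$ meets $L$ or not; the disjoint ones are removed exactly as before. The delicate case is a circle whose innermost disc carries a puncture of $\Sigma$: such a disc meets $t_i$ in a single point, so the clean incompressibility surgery is unavailable, and one must instead cap against $S$ and play the puncture off against the local-unknottedness of $(B_i,t_i)$ to push it across $S$ and reduce the count. Driving $\Sigma\cap S$ to the empty set leaves $\Sigma$ inside one ball $B_i$ meeting $t_i$ in two points; the no-local-knot hypothesis then makes it bound a ball meeting $t_i$ in an unknotted arc, so the decomposition of $L$ realised by $\Sigma$ is trivial. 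Combining the three conclusions gives that $L$ is prime.

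I expect the main obstacle to be precisely this last reduction: controlling the circles of $\Sigma\cap S$ whose innermost discs carry one of the two punctures of $\Sigma$. For these the $\pi_1$-injectivity of $P$ does not directly apply, and the bookkeeping has to be arranged so that $\card(\Sigma\cap S)$ genuinely drops at each step rather than merely trading one bad circle for another. Everything else---the incompressibility lemma, irreducibility of the $X_i$, and the splitting-sphere argument---is routine once the capping-off construction is set up.
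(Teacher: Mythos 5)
The paper does not actually prove this statement---it is quoted verbatim from Lickorish \cite[Theorem~1]{Lick81} with no proof supplied---and your sketch is essentially Lickorish's original argument: first the key lemma that primeness (no local knots plus nontriviality) forces the $4$-punctured sphere $\partial B_i\setminus t_i$ to be incompressible in each tangle exterior, then irreducibility of the exteriors, then innermost-circle reduction of $\Sigma\cap S$ for splitting spheres, for compressing discs in the full exterior (ruling out the unknot via $\pi_1$-injectivity), and for twice-punctured decomposing spheres. Your outline is sound, and the step you flag as delicate is precisely where the cited proof works hardest: by parity the cap on the Conway sphere for a once-punctured innermost disc can be chosen on the side containing a single puncture, producing a sphere meeting $t_i$ in two points, and the no-local-knots hypothesis then yields the ball across which $\Sigma$ is isotoped so that $\card(\Sigma\cap S)$ strictly drops.
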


\begin{lem}\label{lem:cable-prime}
Each nontrivial cable $2$-string tangle is prime.
\end{lem}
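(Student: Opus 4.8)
The plan is to use that nontriviality is exactly the hypothesis, so by the definition of primeness it suffices to prove that a cable $2$-string tangle $(B,t)$ cannot be locally knotted. Suppose for contradiction that it is, and fix a ball $B'\subset B$ with $t':=B'\cap t$ a single arc forming a \emph{nontrivial} $1$-string tangle $(B',t')$. Write $s_0=f(\{0\}\times I)$ and $s_1=f(\{1\}\times I)$ for the two strings, i.e.\ the two edges of the defining band $R:=f(I\times I)$. Since $t'$ is connected and $s_0\cap s_1=\emptyset$, the arc $t'$ lies in a single string, say $t'\subset s_0$; and since $B'\cap t=t'\subset s_0$, the other string misses $B'$ entirely, that is, $s_1\cap B'=\emptyset$. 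The goal is then to contradict the nontriviality of $(B',t')$ by producing a properly embedded disk in $B'$ that contains $t'$; the disk $R$ will furnish it.

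First I would place $R$ in general position with respect to the sphere $\partial B'$, so that $\Gamma:=R\cap\partial B'$ is a disjoint union of circles and properly embedded arcs in the disk $R$. Here $\partial R=s_0\cup s_1\cup\alpha_1\cup\alpha_2$, where $\alpha_1,\alpha_2\subset\partial B$ are the images of $I\times\partial I$. Because $s_1\cap B'=\emptyset$, no endpoint of $\Gamma$ lies on $s_1$, while the two points of $\partial t'$ are the only points in which $s_0$ meets $\partial B'$. The circles of $\Gamma$ are removed by the usual innermost-disk argument: an innermost circle $C$ bounds a subdisk $d\subset\inr(R)$ disjoint from $\Gamma$ and from $t$, hence lying wholly on one side of $\partial B'$; capping $C$ by an innermost disk of $\partial B'\setminus\Gamma$ chosen to avoid $\partial t'$ and isotoping $R$ across the resulting ball (rel a neighbourhood of $t$) removes $C$ without disturbing $t'$. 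After finitely many such moves $\Gamma$ consists of arcs only.

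Next I would minimise the number of arcs of $\Gamma$ by isotopies of $R$ keeping $t'$ fixed, and consider the component $\Phi$ of $R\setminus\Gamma$ whose closure contains $\inr(t')$. Since a neighbourhood of $\inr(t')$ in $R$ lies in $B'$, we have $\Phi\subset B'$, and $\Phi$ is a disk. The plan is to show that in minimal position $\partial\Phi=t'\cup\gamma$ for a single arc $\gamma\subset\Gamma\subset\partial B'$. Indeed, as the only $\Gamma$-endpoints on $s_0$ are the two points $\partial t'$, any outermost arc of $\Gamma$ on $R$ cuts off a subdisk $E\subset R$ whose $\partial R$-side $\delta$ either equals $t'$ (whence $E$ is already the desired disk), or is disjoint from $\inr(t')$ and lies on $\alpha_1\cup\alpha_2\cup(s_0\setminus t')$; in the latter case $E$ can be pushed across $\partial B'$ by an isotopy of $R$ rel $t'$, lowering $|\Gamma|$ and contradicting minimality. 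Thus minimality yields $\partial\Phi=t'\cup\gamma$ with $\gamma\subset\partial B'$, so that $\Phi$ is a properly embedded disk in $B'$ containing $t'$; this makes $(B',t')$ trivial, the desired contradiction.

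The main obstacle is the minimal-position bookkeeping of the last paragraph: one must check that each superfluous arc of $\Gamma$ can be eliminated by an ambient isotopy that preserves the equality $B'\cap t=t'$ (so that $B'$ remains a genuine witness to local knotting), paying particular attention to outermost arcs meeting the corners of $R$ or running into the boundary arcs $\alpha_1,\alpha_2\subset\partial B$. I expect the disjointness $s_1\cap B'=\emptyset$ together with the product structure of the band $R$ to make every such reduction legitimate — this disjointness is precisely what forces the surviving trivialising disk to appear on the $s_0$-side — but verifying it cleanly is the step that requires care.
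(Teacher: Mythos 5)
Your strategy---showing that a cable tangle has no local knots by analysing the intersection of the defining band $R=f(I\times I)$ with the sphere $\dd B'$---is sound and genuinely different from the paper's argument, but as written it has a real gap at exactly the step you flag. In the outermost-arc reduction, when the side $\delta$ of the outermost disk $E$ lies on $s_0\sm t'$ (or runs over $\alpha_1\cup\alpha_2$), the proposed ``isotopy of $R$ rel $t'$ across $\dd B'$'' drags a piece of the string $t$ (or a piece of $R\cap\dd B$) across $\dd B'$; this destroys the defining equality $B'\cap t=t'$, so $B'$ ceases to be a witness of local knotting and the minimality induction does not close as stated. There is also a smaller slip in the circle-removal step: an innermost disk of $\dd B'\sm\Gamma$ is bounded by \emph{some} component of $\Gamma$, not necessarily by your circle $C$ that is innermost on $R$; to cap $C$ you should instead take the disk of $\dd B'\sm C$ that does not contain the arc component of $\Gamma$ (this disk avoids $\dd t'$, since both points of $\dd t'$ lie on that arc), or surger $R$ along a disk innermost on $\dd B'$ chosen to miss the arc. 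Both problems disappear if you first normalize $B'$ into $\inr(B)$, shrinking $B'$ along a collar of $\dd B'$ chosen product-like near $\dd t'$; this preserves the tangle type of $(B',t')$. Then $\dd B'\cap\dd B=\emptyset$, so the endpoints of arc components of $\Gamma$ can only be the two points of $\dd t'$ (recall $t\cap\dd B'=\dd t'$ exactly, because $B'\cap t=t'$, and $s_1\cap B'=\emptyset$). Hence, once the circles are removed, $\Gamma$ is a \emph{single} arc $\gamma$, and the component of $R\sm\gamma$ adjacent to $t'$ is automatically your disk $\Phi$ with $\dd\Phi=t'\cup\gamma$: no minimal-position bookkeeping over $\alpha_1,\alpha_2$ is needed at all.

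For comparison, the paper proves the lemma by a completely different and much shorter route: sum the cable tangle with the trivial tangle in the obvious way, so that the resulting knot bounds a push-in of the band $R$ and is therefore the unknot; a local knot in the tangle would then be a nontrivial connected summand of the unknot, contradicting Schubert's Unique Factorization Theorem. Your band-intersection argument, once repaired as above, is a legitimate self-contained alternative that avoids invoking Schubert, at the cost of the general-position analysis that the closure-plus-factorization trick sidesteps entirely.
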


\begin{proof} (See~\cite[Examples~(a) and~(b)]{Lick81}.)
It is enough to observe that we can, in an obvious manner, add the trivial $2$-string tangle to any cable $2$-string tangle so as to create the trivial knot, which proves that the initial tangle has no local knots (this follows by the Unique Factorization Theorem by Schubert~\cite{Schu49}).
\end{proof}

\begin{lem}\label{lem:cable-triv-prime}
No composite knot is a sum of a nontrivial cable $2$-string tangle with the trivial $2$-string tangle.
\end{lem}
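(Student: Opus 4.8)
The plan is to identify the resulting knot explicitly as a cable knot (or torus knot) and then invoke the classical fact that such knots are prime. Let $(B,t)$ be a nontrivial cable $2$-string tangle with ribbon $f\colon I\times I\to B$, so that the two strings are $t=f(\partial I\times I)$ while the band $R:=f(I\times I)$ meets $\partial B$ exactly in its two short sides $f(I\times\partial I)$. The core arc $\alpha:=f(I\times\{1/2\})$ is a properly embedded arc in $B$, i.e.\ a $1$-string tangle $(B,\alpha)$, of which the two strings of $t$ are parallel copies via $R$. Summing $(B,t)$ with the trivial tangle $(A,s)$ and using a properly embedded disk $\delta\subset A$ with $s\subset\delta$, I would close $\alpha$ by an arc $\alpha'\subset\delta$ to the knot $C$, the closure of $(B,\alpha)$, and build a tubular neighbourhood $N(C)$ from $R$ on the $B$-side and a sub-band of $\delta$ on the $A$-side.

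First I would check that $K:=(B,t)\cup_f(A,s)$, when it is a knot, lies on the torus $\partial N(C)$ as a $(2,q)$-curve. The two strings of $t$ are the two longitudinal copies of $\alpha$ prescribed by $R$, and the two strings of $s$, lying on $\delta$, are longitudinal copies of $\alpha'$; these four arcs fit together on $\partial N(C)$ into a single simple closed curve running twice in the longitudinal direction, the number $q$ of meridional turns being determined by the framing of $R$ together with the way the four endpoints are matched by $f$. Since $C$ crosses the gluing sphere $\Sigma=\partial B=\partial A$ in exactly two points, the torus $\partial N(C)$ meets $\Sigma$ in two meridional circles, each carrying two of the four points $\partial t=\partial s$, which is consistent with this picture. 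The sum is a knot precisely when $q$ is odd.

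This yields a trichotomy. If $(B,\alpha)$ is trivial then $C$ is the unknot and $K$ is the $(2,q)$-torus knot (or the unknot when $q=\pm1$); if $(B,\alpha)$ is nontrivial, in particular if it is locally knotted, then $C$ is nontrivial and $K$ is a genuine cable knot over $C$. In every case $K$ is the unknot, a nontrivial torus knot, or a cable knot, none of which is composite: torus knots are prime, and cable knots are prime over an arbitrary companion, since $\partial N(C)$ is an essential torus and no essential meridional annulus survives the Seifert-fibred cable piece of the exterior. The main obstacle is the middle step: converting the combinatorial ribbon-and-disk data into an honest companion solid torus $N(C)$ carrying $K$ as a $(2,q)$-curve, and in particular pinning down the framing $q$ and the parity that governs when the sum is a knot. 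An alternative, purely tangle-theoretic route would mimic the argument behind Theorem~\ref{thm:sum-prime}: assume a decomposing sphere $S$ for a composite $K$, put it in minimal position with $\Sigma$, and use primeness of the cable tangle (Lemma~\ref{lem:cable-prime}) together with triviality of $(A,s)$ to remove all curves of $S\cap\Sigma$ and reach a contradiction; the delicate point there is exactly that the trivial tangle is \emph{not} prime, so Theorem~\ref{thm:sum-prime} cannot be applied directly and the special cable structure must be used to compensate.
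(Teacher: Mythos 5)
Your main route has a genuine gap: the claim that the sum $K$ always lies on $\partial N(C)$ as a curve running twice in the longitudinal direction is false for an essential family of gluings. The four endpoints of $t$ are grouped into two pairs by the short sides of the band, and the two arcs of $s$, being disjoint in the disk $\delta$, realize one of two pairings after gluing: either each arc of $s$ joins endpoints lying on \emph{different} band ends (then $K$ indeed has winding number $2$ around $C$ and your cable/torus-knot identification is correct), or each arc of $s$ joins the two endpoints of a \emph{single} band end. In the latter case $K$ traverses the band once in each direction, so it has winding number $0$ in the solid torus $N(\alpha)\cup A$, and it is not isotopic to any $(2,q)$-curve on $\partial N(C)$: with ``straight'' caps $K$ is the unknot even when the core arc is knotted (a $(2,q)$-cable of a nontrivial $C$ is never unknotted), while gluings that wind around the other band end produce clasped, double-type satellites of $C$ (wrapping number $2$, winding number $0$), whose genus can exceed $1$, so neither Schubert's primeness of cables nor a quick genus count disposes of them. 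Note that these $K$ are knots, not two-component links, so your parity criterion ``the sum is a knot precisely when $q$ is odd'' silently excludes exactly this case. Ruling out a composite $K$ here is the real content of the lemma --- it is a statement of cabling-conjecture flavor (can a nontrivial surgery yield a reducible manifold?) --- and, as you yourself observe, no innermost-disk argument in the style of Theorem~\ref{thm:sum-prime} closes it, because the trivial tangle is not prime and compressions into the trivial side genuinely occur; your sketch stops short of supplying the compensating argument.

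The paper resolves precisely this point with double branched covers. The double cover of $(A,s)$ branched over $s$ is a solid torus; gluing by the trivializing map $f_0$ gives the unknot, whose branched double cover $M_0$ is $S^3$, and in $M_0$ the solid torus covering $(A,s)$ has composite core, namely a sum of two copies of the $1$-string tangle $(B,t_1)$ --- composite because the core arc is knotted, which matches your correct observation that a nontrivial cable tangle forces a knotted core. Passing from $f_0$ to $f$ is a Dehn surgery on this composite knot, and such surgery yields an irreducible, hence prime, manifold by \cite[Theorem~7.1]{Gor83}; a knot whose branched double cover is prime is itself prime by \cite{Wal69}, so $K$ is prime if nontrivial. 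Your winding-number-$2$ analysis does essentially match the alternative proof sketched in the paper's first Remark after the lemma (primeness of cable knots via \cite{Schu53, Gra91}, combined with \cite{E-M86, E-M88} or \cite{GL87}), but that alternative needs the reducible-surgery input exactly to handle the winding-zero gluings your trichotomy omits.
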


\begin{proof}
Suppose that a knot $K$ in $S^3$ is presented as a sum 
$$
(S^3,K)=(B, t)\cup_f (A, s), \quad f\colon (\dd A, \dd s) \to (\dd B, \dd t),
$$ 
of a nontrivial cable $2$-string tangle $(B,t)$ with a trivial $2$-string tangle $(A, s)$.
Let $$f_0\colon (\dd A, \dd s) \to (\dd B, \dd t)$$ yields an obvious `trivializing' sum for $(B,t)$, that is, the sum $(B, t)\cup_{f_0} (A, s)$ is the trivial knot.\footnote{In fact, the results of \cite{BS86, BS88} imply that there is essentially unique way to create the trivial knot as a sum of a given prime $2$-string tangle and a trivial $2$-string tangle. In particular, if $\phi\colon (\dd A, \dd s) \to (\dd B, \dd t)$ is a homeomorphism such that $(B, t)\cup_\phi (A, s)$ is the trivial knot then the map $f_0^{-1}\circ\phi\colon (\dd A, \dd s)\to (\dd A, \dd s)$ extends to a map $F\colon(A,s)\to(A,s)$ such that $F(s)=s$. 
}
(See left side of Fig.~\ref{fig:333}.)
Let $M_0$ denote the double cover of the $3$-sphere $B\cup_{f_0} A$ branched over the trivial knot $t\cup_{f_0} s$,
and let $M_1$ be the double cover of the $3$-sphere $B\cup_f A$ branched over the knot 
$t\cup_f s=K$.
Then $M_0$ is homeomorphic to the $3$-sphere, while
$M_0$ and $M_1$ are related by a Dehn surgery along the solid torus covering~$(A,s)$.
We observe that the solid torus $V_A\subset M_0=S^3$ that covers~$(A,s)$ is knotted as a composite knot. 
Indeed, the definition of cable tangles imply that there is an obvious ambient isotopy of $B\cup_{f_0} A$ that moves $t\cup_{f_0} s$ and $A$ to a position in which $t\cup_{f_0} s$ is a geometric circle and $A$ is a closed regular neigborhood of a `knotted diameter' of this circle. See Fig.~\ref{fig:333}.

\begin{figure}[ht]
\begin{center}
\begin{tikzpicture} 
[bballs/.style={white,double=blue!20!white,ultra thick,double distance=5pt},
knots/.style={white,double=black,ultra thick,double distance=1pt}]

\fill[blue!20!white](0,0) circle (2/3) node [color=black]{$A$};
\draw[knots] 
(0/15,20/15) .. controls +(7/15,0/15) and +(0/15,-6/15) .. (15/15,31/15)
(0/15,25/15) .. controls +(5/15,0/15) and +(0/15,-3/15) .. (10/15,31/15);
\draw[knots] 
(11/15,15/15) .. controls +(0/15,20/15) and +(0/15,12/15) .. (-15/15,33/15)
(6/15,15/15) .. controls +(0/15,18/15) and +(0/15,10/15) .. (-11/15,32/15);
\draw[knots] 
(15/15,31/15) .. controls +(0/15,17/15) and +(0/15,25/15) .. (-12/15,19/15)
(10/15,31/15) .. controls +(0/15,10/15) and +(0/15,12/15) .. (-8/15,20/15);
\draw[knots] 
(0/15,20/15) .. controls +(-7/15,0/15) and +(0/15,-8/15) .. (-15/15,33/15)
(0/15,25/15) .. controls +(-5/15,0/15) and +(0/15,-5/15) .. (-11/15,32/15);
\draw[line width=1pt] 
(-12/15,19/15) .. controls +(0/15,-10/15) and +(0/15,-5/15) .. (-3/15,8/15)
(-8/15,20/15) .. controls +(0/15,-7/15) and +(0/15,2/15) .. (-3/15,8/15);
\draw[line width=1pt] 
(11/15,15/15) .. controls +(0/15,-10/15) and +(0/15,-4/15) .. (5/15,6/15)
(6/15,15/15) .. controls +(0/15,-5/15) and +(0/15,2/15) .. (5/15,6/15);

\draw[->] (1.6,2.7) arc (120:60:.8);

\begin{scope}
[xshift=4cm] 

\draw[->] (1.6,2.7) arc (120:60:.8);

\fill[blue!20!white](0,0) circle (2/3);
\draw[bballs] 
(0/15,20/15) .. controls +(7/15,0/15) and +(0/15,-6/15) .. (15/15,31/15);
\draw[bballs] 
(11/15,19/15) .. controls +(0/15,20/15) and +(0/15,12/15) .. (-15/15,33/15)
(6/15,19/15) .. controls +(0/15,18/15) and +(0/15,10/15) .. (-15/15,33/15);
\draw[line width=5pt, color=blue!20!white] 
(8/15,19/15) .. controls +(0/15,20/15) and +(0/15,11/15) .. (-15/15,33/15)
(9/15,19/15) .. controls +(0/15,20/15) and +(0/15,11/15) .. (-15/15,33/15);
\draw[bballs] 
(15/15,31/15) .. controls +(0/15,17/15) and +(0/15,25/15) .. (-12/15,19/15)
(15/15,31/15) .. controls +(0/15,17/15) and +(0/15,25/15) .. (-10/15,20/15);
\draw[bballs] 
(0/15,20/15) .. controls +(-7/15,0/15) and +(0/15,-8/15) .. (-15/15,33/15);
\draw[line width=5pt, color=blue!20!white]
(-12/15,19/15) .. controls +(0/15,-7/15) and +(0/15,7/15) .. (-8.7/15,0/15)
(-10/15,20/15) .. controls +(0/15,-7/15) and +(-5/15,0/15) .. (0/15,8.7/15)
(-10/15,20/15) .. controls +(0/15,-7/15) and +(-5/15,0/15) .. (-2/15,7/15)
(-10/15,20/15) .. controls +(0/15,-7/15) and +(-5/15,0/15) .. (-4/15,5/15);
\fill[blue!20!white](8.47/15,19/15) circle (3.78/15);
\fill[blue!20!white](0/15,20/15) circle (2.4pt);
\fill[blue!20!white](-15/15,33/15) circle (2.4pt);
\fill[blue!20!white](15/15,31/15) circle (2.4pt);
\fill[blue!20!white](-12/15,19/15) circle (2.4pt);
\fill[blue!20!white](-10/15,19.2/15) circle (2.4pt);
\draw[line width=1pt] 
(11/15,15/15) .. controls +(0/15,-10/15) and +(0/15,-4/15) .. (5/15,6/15)
(6/15,15/15) .. controls +(0/15,-5/15) and +(0/15,2/15) .. (5/15,6/15);
\draw[line width=1pt] 
(11/15,15/15) .. controls +(0/15,2/15) and +(2/15,0/15) .. (8.5/15,19/15)
(6/15,15/15) .. controls +(0/15,2/15) and +(-2/15,0/15) .. (8.5/15,19/15);
\draw (-7/15,7/15) node{$A$};

\end{scope}

\begin{scope}
[xshift=8.7cm,yshift=-0.2cm] 

\draw[bballs] 
(0/22,20/22) .. controls +(15/22,0/22) and +(15/22,0/22) .. (5/22,43/22);
\draw[bballs] 
(-8/22,41/22) .. controls +(10/22,0/22) and +(-35/22,0/22) .. (38/22,30/22)
(-8/22,41/22) .. controls +(10/22,0/22) and +(-35/22,0/22) .. (38/22,28/22);
\draw[bballs] 
(-38/22,30/22) .. controls +(38/22,0/22) and +(-10/22,0/22) .. (5/22,43/22)
(-38/22,28/22) .. controls +(38/22,0/22) and +(-10/22,0/22) .. (5/22,43/22);
\draw[bballs] 
(0/22,20/22) .. controls +(-15/22,0/22) and +(-15/22,0/22) .. (-8/22,41/22);
\fill[blue!20!white](-38/22,29/22) circle (3.8pt);
\fill[blue!20!white](38/22,29/22) circle (3.8pt);
\fill[blue!20!white](0/22,20/22) circle (2.4pt);
\fill[blue!20!white](-8/22,41/22) circle (2.4pt);
\fill[blue!20!white](5/22,43/22) circle (2.4pt);
\draw (-34/22,29/22) node{$A$};
\draw[line width=1pt] 
(0/22,29/22) circle (38/22);

\end{scope}

\end{tikzpicture} 
\end{center}
\caption{For the proof of Lemma~\ref{lem:cable-triv-prime}} \label{fig:333}
\end{figure}
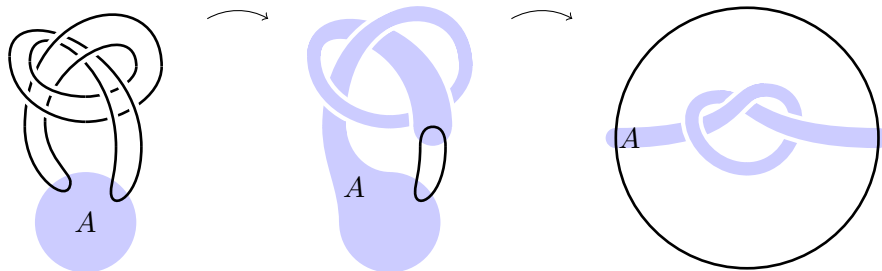

This clearly implies that $V_A$ is a regular neigborhood of a composite knot. (This composite knot is a sum of two copies of the $1$-string tangle $(B,t_1)$, where $t_1$ is a component of~$t$.) 
It is known that a nontrivial Dehn surgery on a composite knot in~$S^3$ yields an irreducible (hence prime) manifold (see~\cite[Theorem~7.1]{Gor83}). 
It is known that if the double cover of $S^3$ branched over a knot $R$ is prime then $R$ is prime (see \cite{Wal69}; see also \cite[Corollary~4]{KT80} for the inverse implication).
Consequently, $K$ is a prime knot if nontrivial.
\end{proof}

\subsection*{Remarks} 
1. Lemma~\ref{lem:cable-triv-prime} also follows from results of~\cite{E-M86} (see also~\cite[Theorem~6]{E-M88})
or equivalently from the fact that \emph{only integral Dehn surgeries can yield reducible manifolds} \cite{GL87}.
This way of proof uses the fact that cable knots are prime (see \cite[p. 250, Satz 4]{Schu53}, \cite[Cor.~2]{Gra91}).

2. Lemma~\ref{lem:cable-triv-prime} is used in the proof of Proposition~\ref{as:iii-for-PT} (which in its turn is used in the proof of assertion~(iii) of Proposition~\ref{lem:main}), where it covers the case of $2$-bridge knots.
It is known (see~\cite{Wel92}) that the percentage of $2$-bridge knots amongst all of the 
prime knots of~$n$ or fewer crossings approaches $0$ as $n$ approaches infinity.
Thus, in the proof of Theorem~\ref{T1}, we can discard $2$-bridge knots together with Lemma~\ref{lem:cable-triv-prime}. Nevertheless, we use Lemma~\ref{lem:cable-triv-prime} for the sake of completeness of Propositions~\ref{as:iii-for-PT} and~\ref{lem:main}.

\begin{cor}\label{cor:cable-triv-prime}
No composite knot is a sum of two cable $2$-string tangles.
\end{cor}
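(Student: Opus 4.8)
The plan is to write the given knot as a sum $(B,t)\cup_f(A,s)$ of two cable $2$-string tangles and to argue by cases according to how many of the two summands are nontrivial, recalling that by definition each cable tangle is either trivial or nontrivial. In every case it will suffice to show that the resulting knot is prime or trivial, hence not composite. So the whole proof reduces to dispatching three cases, each with a result already at hand.

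First I would handle the case where both summands are nontrivial. By Lemma~\ref{lem:cable-prime} each nontrivial cable tangle is prime, so $(B,t)$ and $(A,s)$ are both prime $2$-string tangles, and Theorem~\ref{thm:sum-prime} (Lickorish) then guarantees that their sum is a prime link. Since we are assuming the sum is a knot, it is a prime knot, and in particular not composite. The mixed case, in which exactly one summand is a nontrivial cable tangle and the other is the trivial tangle, is precisely the content of Lemma~\ref{lem:cable-triv-prime}, so nothing new is required there.

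The remaining case, where both summands are trivial, is the one I expect to need the most care, because the trivial tangle is not prime and so Theorem~\ref{thm:sum-prime} does not apply to it. Here the plan is to observe that a sum of two trivial $2$-string tangles is a rational (two-bridge) link, and then to invoke the classical fact, due to Schubert, that every nontrivial two-bridge knot is prime; this is exactly the point flagged in the second Remark after Lemma~\ref{lem:cable-triv-prime}. Combining the three cases shows that a sum of two cable tangles is always prime or trivial, never composite, which is the assertion of Corollary~\ref{cor:cable-triv-prime}.
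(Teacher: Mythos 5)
Your proposal is correct and matches the paper's own proof essentially verbatim: the same three-case split (both tangles prime via Lemma~\ref{lem:cable-prime} plus Theorem~\ref{thm:sum-prime}, the mixed case via Lemma~\ref{lem:cable-triv-prime}, and the two-trivial-tangles case handled by noting the knot is $2$-bridge and invoking Schubert's primeness theorem~\cite{Schu54}). The only cosmetic slip is that the Remark you cite concerns the asymptotic scarcity of $2$-bridge knots rather than their primeness, but the mathematical content of your argument is exactly the paper's.
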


\begin{proof}
A cable $2$-string tangle is either prime or trivial (Lemma~\ref{lem:cable-prime}).
A sum of two prime $2$-string tangles is a prime link by Theorem~\ref{thm:sum-prime}.
No composite knot is a sum of a nontrivial cable $2$-string tangle with the trivial $2$-string tangle by Lemma~\ref{lem:cable-triv-prime}.
If a knot $K$ is a sum of two trivial $2$-string tangles,
then the bridge number $\bn(K)$ of~$K$ is at most~$2$.
If~$\bn(K)=1$ then $K$ is a trivial knot.
If~$\bn(K)=2$ then $K$ a prime knot by~\cite{Schu54}.
\end{proof}

\section{Proof of assertion $\rm(iii)$ of Proposition~\ref{lem:main}}
\label{sec:proof-iii}

\subsection*{Definition. Weak property PT}
Let $D$ be a knot diagram on the $2$-sphere $S^2=\R^2\cup\{\infty\}$.
We say that $D$ has \emph{weak property PT} (PT stands for `tangle primeness') if 
$D$ is obtained by adding ears to a diagram of a tangle that is not locally knotted (that is, the tangle is either prime or trivial).
In other words, $D$ has weak property PT if there exists a $2$-disk $d\subset S^2$ such that 

-- the boundary $\dd d$ intersects $D$ transversely in four points;

-- the intersection $d\cap D$ consists of two simple non-intersecting arcs (as on the left side of Fig.~\ref{fig:gamma}); 

-- the complementary disk $\delta:=S^2\sm \inr(d)$ with the diagram $\delta \cap D$ represents a $2$-string tangle $(B,t)$ which is not locally knotted (that is, $(B,t)$ is either prime or trivial).

We say that a knot has \emph{weak property PT} if it has a minimal diagram with weak property PT.

\begin{prop}\label{prop:23impliesPT}
Each minimal diagram of each $\frac23$-regular prime knot has weak property~PT.
In particular, each $\frac23$-regular prime knot has weak property~PT.
\end{prop}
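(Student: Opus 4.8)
The plan is to exhibit, on an arbitrary minimal diagram $D$ of the given knot $P$, a disk $d$ cutting off two crossing-free arcs whose complementary tangle has no local knots. The combinatorial Lemma~\ref{lem:Berge-Coroll} will locate $d$, while the primeness of $P$ will rule out local knotting in the complement.

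First I would set $n=\crn(P)$; since $P$ is prime it is nontrivial, so $n\ge 3$ and $D$ has $2n$ edges. Applying Lemma~\ref{lem:Berge-Coroll} to $D$ produces neighboring edges $I$ and $J$ with $\rho(I,J)\ge 2n/3$. The size of $\rho(I,J)$ means that the two arcs of $P$ joining $I$ to $J$ are both long, so the complementary tangle will be substantial; this $2/3$, echoing the $\frac23$ of $\frac23$-regularity, is the quantitative input on which the later construction rests, whereas the local-knot part of weak property PT will be driven purely by the primeness of $P$.

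Next I would assemble the disk. Since $I$ and $J$ are neighbors they share a complementary region $Q$ of $D$ in $S^2$; choosing an embedded arc $\alpha\subset Q$ from the interior of $I$ to the interior of $J$ and fattening $\alpha$ together with short sub-arcs of $I$ and $J$ at its ends yields a disk $d$ whose boundary meets $D$ transversally in exactly four points and for which $d\cap D$ is two disjoint crossing-free arcs, as on the left of Fig.~\ref{fig:gamma}. The complementary disk $\delta=S^2\sm\inr(d)$ then carries a two-string tangle $(B,t)$ into which all $n$ crossings of $D$ have been pushed.

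Finally I would verify that $(B,t)$ is not locally knotted. Because $d\cap D$ consists of two disjoint crossing-free arcs, the tangle inside $d$ is trivial and $P$ is exactly the closure of $(B,t)$. If $(B,t)$ were locally knotted, some ball $B'\subset B$ would meet $t$ in a nontrivial $1$-string tangle; the resulting knotted ball--arc pair inside $P$ would cut off a nontrivial connected summand, forcing $P$ to be composite --- against primeness. Hence $(B,t)$ is prime or trivial, $D$ has weak property PT, and the ``in particular'' clause follows from the existence of a minimal diagram. The step I expect to be most delicate is the disk construction: one must check transversality and that precisely four intersection points occur, with the two arcs disjoint and crossing-free, so that $\delta\cap D$ genuinely presents a two-string tangle whose closure is $P$. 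Once that is in place, the passage from a local knot to a connected summand is the standard fact that a knotted ball--arc pair in $S^3$ yields a connected-sum factor.
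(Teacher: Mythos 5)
There is a genuine gap, and it sits exactly where you placed your confidence: in the claim that the local-knot part of weak property PT ``will be driven purely by the primeness of $P$''. A nontrivial ball--arc pair $(B',B'\cap t)$ inside the tangle $(B,t)$ does give a connected summand $L$ of $P$, but primeness of $P$ does not force a contradiction: it only forces $L=P$. Nothing in primeness prevents a minimal diagram from presenting \emph{all} of the knotting of $P$ inside a small ball $B'$ threaded by one strand of $t$, with the rest of the tangle unknotted; in that situation $(B,t)$ is locally knotted, the closure of $(B,t)$ is still the prime knot $P$, and your argument stalls. Indeed, if primeness alone ruled out local knotting, your proof would establish Conjecture~\ref{con:WPT} unconditionally for all prime knots, and by Theorem~\ref{T2} this would outright refute Conjecture~\ref{H1} --- a strong sign the step cannot be right. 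Note also that your proof never actually uses the hypothesis that $P$ is $\frac23$-regular, nor the bound $\rho(I,J)\ge 2n/3$: any pair of neighboring edges suffices for your disk construction, so the invocation of Lemma~\ref{lem:Berge-Coroll} is decorative in your write-up.

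The paper closes exactly this hole with a quantitative argument that you skipped. Writing $c_1,c_2$ for the two strands of the tangle diagram $\delta\cap D_P$ and $K_i$ for the closure of the $1$-string tangle $(B,t_i)$, one has $\rho(I,J)=\min_i\{2\crn(c_i)+\card(c_1\cap c_2)\}$ and $\crn(P)=\crn(c_1)+\crn(c_2)+\card(c_1\cap c_2)$, whence $\rho(I,J)\ge\frac{2\crn(P)}{3}$ yields $\crn(K_i)\le\frac23\crn(P)-\frac{\card(c_1\cap c_2)}{2}$; a cutpoint argument on the minimal diagram gives $\card(c_1\cap c_2)\ge 2$ and hence $\crn(K_i)\le\frac23\crn(P)-1$. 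Now if $(B,t)$ were locally knotted, primeness forces the local knot $L$ to equal $P$, but $L=P$ is then a factor of the knot $K_i$ whose strand carries the local knot, so $\frac23$-regularity demands $\crn(K_i)\ge\frac23\crn(P)$, contradicting the bound above. This is where both the hypothesis of $\frac23$-regularity and the full strength of the distance estimate enter; your instinct that the disk construction was the delicate step is misplaced --- that part is routine and matches the paper --- while the step you treated as ``standard'' is the one that fails.
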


\begin{proof}
Let $D_P$ be a minimal diagram, on the $2$-sphere $S^2=\R^2\cup\{\infty\}$, of a $\frac23$-regular prime knot~$P$.
By Lemma~\ref{lem:Berge-Coroll}, $D_P$ has a pair of neighboring edges $I$ and $J$ with $\rho(I,J)\ge  \frac{2\crn(P)}3$.
Since $I$ and $J$ are neighbors, there exists a disk $d\subset S^2$ such that the intersection $d \cap D_P$ consists of a subarc of $I$ and a subarc of~$J$, while $\dd d$ intersects $D_P$ transversely in four points. 
Let $\delta$ denote the disk $S^2\sm \inr(d)$, and let
$(B,t)$ be the $2$-string tangle represented by the diagram $\delta \cap D_P$.
Let $t_1$ and $t_2$ be the components of~$t$, and let $K_1$ and $K_2$ be the knots that are the closures of the $1$-string tangles $(B,t_1)$ and $(B,t_2)$.

\begin{claim}
\label{clm:2/3}
We have $\crn(K_i)\le \frac23\crn(P)-1$ for $i\in\{1,2\}$.
\end{claim}

\begin{proof}
The diagram $\delta \cap D_P$ of the tangle $(B,t)$ is formed by two curves, $c_1$ and $c_2$ say,
corresponding to the components $t_1$ and $t_2$, respectively, of~$t$.
We denote by $\crn(c_i)$ the number of double points of~$c_i$.
Since a diagram of~$K_1$ can be obtained from $c_1$ by adding a simple arc in~$d$, it follows that we have
\begin{equation} 
\label{eq:Ma1}
\crn(K_1)~\le~ \crn(c_1).
\end{equation}
Observe that by construction we have 
\begin{equation}
\label{eq:Paa}
\crn(P)~=~\crn(D_P)~=~\crn(c_1)+\crn(c_2)+\card(c_1\cap c_2).
\end{equation}
By the definition of~$\rho$ (this definition is given at the beginning of Sec.~\ref{sec:combinatorial_lemma}) we have
\begin{equation}
\label{eq:rhomin}
\rho(I,J)=\min\{2\crn(c_1)+\card(c_1\cap c_2), 2\crn(c_2)+\card(c_1\cap c_2)\}.
\end{equation}
Since $\rho(I,J)\ge  \frac{2\crn(P)}3$, it follows from \eqref{eq:Ma1}, \eqref{eq:Paa},  and~\eqref{eq:rhomin} that
\begin{equation*}
\crn(K_1)~\le~ \crn(c_1) ~\le~  \frac23\crn(P)-\frac{\card(c_1\cap c_2)}2.
\end{equation*}
Since $D_P$ is a minimal diagram of a prime knot and $I\neq J$, it follows that ${c_1\cap c_2\neq \emptyset}$.
Assuming that $c_1$ intersects $c_2$ in a unique point ($q$, say) implies that $q$ is a 
cutpoint\footnote{A point $x$ of a connected topological space~$X$ is a  \emph{cutpoint} if the set $X\sm\{x\}$ is not connected.} of~$D_P$. However, no minimal diagram of a knot has a cutpoint.
This implies that $\card(c_1\cap c_2)\ge 2$ and $\crn(K_1)\le \frac23\crn(P)-1$, as required.
The case of $K_2$ is analogous.
\end{proof}

\begin{claim}
\label{clm:prime-or-trivial}
The $2$-string tangle $(B,t)$ represented by the diagram $\delta \cap D_P$ is either prime or trivial.
\end{claim}

\begin{proof}
Suppose on the contrary that $(B,t)$ is neither prime nor trivial.
Then $(B,t)$ is locally knotted, that is,
$B$ contains a ball $A$ such that the pair $(A,A\cap t)$ is a nontrivial $1$-string tangle.
Let $t_i$, where $i\in\{1,2\}$, be the component of~$t$ that meets~$A$.
We denote by~$L$ the knot that is the closure of the $1$-string tangle ${(A,A\cap t)}$.
Then $L$ is a factor of~$P$.
Since $P$ is prime and $L$ is nontrivial, it follows that $L$ and $P$ are equivalent.
At the same time, $L$ is a factor of~$K_i$ (as defined above, $K_i$ is the closure of the $1$-string tangle $(B,t_i)$).
Since $L$ and $P$ are equivalent, while $P$ is assumed to be $\frac23$-regular,
we have $\crn(K_i)\ge\frac23\crn(P)$, which contradicts Claim~\ref{clm:2/3}.
The~obtained contradiction proves that $(B,t)$ is either prime or trivial.
\end{proof}

Thus, all requirements from the definition of weak property~PT are fulfilled.
Consequently, $D_P$ has weak property~PT.
Proposition~\ref{prop:23impliesPT} is proved.
\end{proof}

\begin{prop}
\label{as:iii-for-PT}
If $P$ is a knot with weak property PT, then there exists a \underline{prime} $\gamma$-knot $P'$ over~$P$ with $\crn(P')\le \crn(P)+17$.
\end{prop}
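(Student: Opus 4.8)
The plan is to realise the $\gamma$-knot by the double figure-eight move of Fig.~\ref{fig:gamma}, performed inside the distinguished disk $d$, and then to extract primeness from the tangle decomposition that this move produces. Concretely, I would start from a minimal diagram $D_P$ of $P$ witnessing weak property PT, together with the disk $d$, the complementary disk $\delta$, and the $2$-string tangle $(B,t)$ carried by $\delta\cap D_P$, which by hypothesis is either prime or trivial. Orient $P$; inside $d$ we see two disjoint simple arcs, and I would first arrange (this is the one point needing care, see below) that they are coherently oriented, so that the move produces a connected diagram. Replacing the two arcs in $d$ by the double figure-eight pattern of Fig.~\ref{fig:gamma}(b) yields a diagram $D_{P'}$ of a knot $P'$; by the discussion accompanying Fig.~\ref{fig:gamma}, $P'$ is a $\gamma$-knot over $P$, with winding number $2$ and the figure-eight knot as companion, so in particular $P'$ is a satellite, hence nontrivial, by assertion~(i).

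The crossing estimate is then immediate. Since $D_P$ is minimal it has $\crn(P)$ crossings, all lying in $\delta$ because $d\cap D_P$ consists of two crossingless arcs. The move alters the diagram only inside $d$, where it inserts the fixed double figure-eight pattern of Fig.~\ref{fig:gamma}(b); counting its crossings (the four crossings of the figure-eight, each quadrupled by the doubling, together with the single crossing closing the two parallel strands into a winding-number-$2$ knot) gives exactly $17$. Hence $\crn(P')\le\crn(D_{P'})=\crn(P)+17$.

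The heart of the argument, and the step flagged as nonobvious in the remark before the proposition, is to show that $P'$ is \emph{prime}: a $\gamma$-knot over a prime knot need not be prime, so primeness must be forced by weak property PT. I would read $D_{P'}$ as a sum of two $2$-string tangles glued along $\dd d$: the tangle $(B,t)$ living in $\delta$, and the tangle $(B_d,t_d)$ inserted in $d$. The inserted tangle is a nontrivial cable tangle, since its two strands run parallel along the figure-eight and cobound an embedded band, so by Lemma~\ref{lem:cable-prime} it is prime. Now I split into the two cases allowed by weak property PT. If $(B,t)$ is prime, then $P'$ is a sum of two prime tangles, hence a prime link by Theorem~\ref{thm:sum-prime}, and being a knot it is a prime knot. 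If instead $(B,t)$ is trivial, then $P'$ is a sum of a nontrivial cable tangle with the trivial tangle, so by Lemma~\ref{lem:cable-triv-prime} it is not composite; since $P'$ is nontrivial, it is prime.

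The main obstacle is precisely this primeness dichotomy, and within it the trivial case (essentially the $2$-bridge situation), where Theorem~\ref{thm:sum-prime} does not apply and one must instead invoke the branched-double-cover and Dehn-surgery argument packaged in Lemma~\ref{lem:cable-triv-prime}. A secondary delicate point is the orientation condition: the double figure-eight move yields a \emph{connected} knot of winding number $2$ only when the two arcs in $d$ are coherently oriented (equivalently, when $(B,t)$ joins the four boundary points ``diagonally''); if they are not, I would either reselect the witnessing disk or insert the analogous clasped cable pattern, which is again a nontrivial cable tangle of at most $17$ crossings, so that both the crossing bound and the primeness argument carry over verbatim.
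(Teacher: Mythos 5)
Your argument is correct and takes essentially the same route as the paper: insert the double figure-eight pattern into $d$, bound the crossings, and extract primeness from the tangle decomposition along $\dd d$ --- Lemma~\ref{lem:cable-prime} plus Theorem~\ref{thm:sum-prime} when $(B,t)$ is prime, Lemma~\ref{lem:cable-triv-prime} plus assertion~(i) of Proposition~\ref{lem:main} when $(B,t)$ is trivial --- with the incoherent-orientation case handled by a clasped pattern, which is precisely the paper's case~($\beta$) in Fig.~\ref{fig:Reid+gamma} (a type~I Reidemeister move composed with the double figure-eight move). Two harmless slips: the coherent pattern contributes $16$ crossings (the four crossings each quadrupled, with no extra clasp crossing; the count $17$ arises only in the incoherent case), and the reason coherence matters is the winding-number-$\ge 2$ condition in the definition of $\gamma$-knots rather than connectedness of the resulting diagram --- also, ``reselecting the witnessing disk'' is not guaranteed to succeed, but your clasped-pattern alternative suffices on its own.
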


\begin{proof}
By definition, $P$ has a minimal crossing diagram $D_P$ with weak property~PT.
This means that there exists a disk $d\subset S^2$ such that

-- the boundary $\dd d$ intersects $D_P$ transversely in four points;

-- the intersection $d\cap D_P$ consists of two simple non-intersecting arcs;

-- the tangle diagram $\delta \cap D_P$, where $\delta:=S^2\sm \inr(d)$, represents 
either prime or trivial $2$-string tangle $(B,t)$.

Without loss of generality we can identify the pair $(d, d \cap D_P)$ with the tangle diagram in Fig.~\ref{fig:gamma}(a).
We have the following two cases:

($\alpha$) two arrows on the arcs in Fig.~\ref{fig:gamma}(a) indicate the same orientation on~$P$,

($\beta$) two arrows on the arcs in Fig.~\ref{fig:gamma}(a) induce opposite orientations on~$P$.

In case ($\alpha$), let $D_\alpha$ be the diagram obtained from $D_P$ by local move as in Fig.~\ref{fig:gamma}
and let $P_\alpha$ be the knot represented by~$D_\alpha$.
Since the figure-eight knot is hyperbolic, an easy argument shows that $P_\alpha$ is a $\gamma$-knot over~$P$.
We check that $P_\alpha$ has all of the desired properties.
First, the obtained diagram~$D_\alpha$ of~$P_\alpha$ has $\crn(P)+16$ crossings. This means that $\crn(P_\alpha)\le \crn(P)+16$.
Next, we prove that $P_\alpha$ is prime.
We observe that, by construction, $P_\alpha$ is a sum of the cable tangle of Fig.~\ref{fig:gamma}(b) and the tangle $(B,t)$, which is prime or trivial.
Each nontrivial cable tangle is prime (see Lemma~\ref{lem:cable-prime}).
If $(B,t)$ is prime then $P_\alpha$ is prime by Theorem~\ref{thm:sum-prime}.
If $(B,t)$ is trivial then $P_\alpha$ is prime by Lemma~\ref{lem:cable-triv-prime} and assertion~(i) of Proposition~\ref{lem:main}
(Lemma~\ref{lem:cable-triv-prime} implies that $P_\alpha$ is either prime or trivial if $(B,t)$ is trivial; 
assertion~(i) implies that $P_\alpha$ is a satellite knot and hence nontrivial).
Thus, $P_\alpha$ is a prime $\gamma$-knot over $P$ with $\crn(P_\alpha)\le \crn(P)+16$, as required.

In case ($\beta$), let $D_\beta$ be the diagram obtained from $D_P$ by local move as in Fig.~\ref{fig:Reid+gamma}
and let $P_\beta$ be the knot represented by~$D_\beta$. 

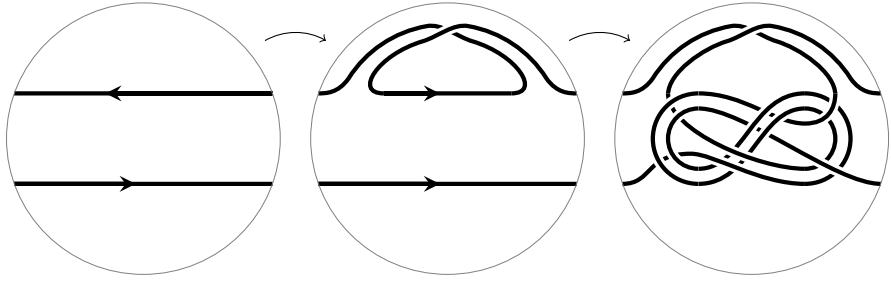
\begin{figure}[ht]
\begin{center}

\begin{tikzpicture} 
[knots/.style={white,double=black,very thick,double distance=1.6pt}]
\draw[line width=1.6pt]  [-stealth]
(-1/5,3/5) -- (16/5,3/5) 
(16/5,3/5)  -- (5/5,3/5);
\draw[line width=1.6pt] [-stealth]
(-1/5,-3/5) -- (16/5,-3/5)
(-1/5,-3/5) -- (7/5,-3/5);
 \draw[help lines](7.5/5,0) circle (9/5);

\draw[->] (7.5/5+2-0.4,1.3) arc (120:60:.8);

\begin{scope}
[xshift=4cm] 

%
%
\draw[knots] 
(-1/5,3/5) .. controls +(.5/5,0) and +(-.7/5,-1/5) .. (1/5,4/5) 
arc (148:100:7.6/5)
 .. controls +(1/5,.2/5) and +(-1/5,.3/5) .. ++(3/5,-1/5)
  .. controls +(3/5,-.9/5) and +(2/5,0) .. (11.7/5,3/5);
\draw[knots] 
(16/5,3/5) .. controls +(-.5/5,0) and +(.7/5,-1/5) .. (14/5,4/5)
arc (32:80:7.6/5) 
.. controls +(-1/5,.2/5) and +(1/5,.3/5) .. ++(-3/5,-1/5)  
.. controls +(-3/5,-.9/5) and +(-2/5,0) .. (3.3/5,3/5);

\draw[line width=1.6pt] [-stealth]
(3.3/5,3/5) -- (11.7/5,3/5)
(3.3/5,3/5) -- (7/5,3/5);

\draw[line width=1.6pt] [-stealth]
(-1/5,-3/5) -- (16/5,-3/5)
(-1/5,-3/5) -- (7/5,-3/5);

\draw[help lines](7.5/5,0) circle (9/5);

\draw[->] (7.5/5+2-0.4,1.3) arc (120:60:.8);

\end{scope}

\begin{scope}
[xshift=8cm] 
%
%
%
\draw[knots] 
(-1/5,3/5) .. controls +(.5/5,0) and +(-.7/5,-1/5) .. (1/5,4/5) 
arc (148:100:7.6/5)
 .. controls +(1/5,.2/5) and +(-1/5,.3/5) .. ++(3/5,-1/5)
 .. controls +(1.5/5,-.45/5) and +(0,1.5/5) .. (13/5,2.9/5);
\draw[knots] 
(16/5,3/5) .. controls +(-.5/5,0) and +(.7/5,-1/5) .. (14/5,4/5)
arc (32:80:7.6/5) 
.. controls +(-1/5,.2/5) and +(1/5,.3/5) .. ++(-3/5,-1/5)  
.. controls +(-1.5/5,-.45/5) and +(0,1.5/5) .. (2/5,2.9/5);
%
%
\draw[knots]
(11/5,-2/5) 
.. controls +(1.111/5,0) and +(0,-1.111/5) .. ++(2/5,2/5)
.. controls +(0,1.111/5) and +(1.111/5,0) .. ++(-2/5,2/5)
(11/5,-3/5) 
.. controls +(1.666/5,0) and +(0,-1.666/5) .. ++(3/5,3/5)
 .. controls +(0,1.666/5) and +(1.666/5,0) .. ++(-3/5,3/5);
%
%
\draw[knots] 
(4/5,3/5)  .. controls +(3/5,0) and +(-2/5,0) .. 
(11/5,1/5) .. controls +(1.5/5,0) and +(0,-1/5) .. 
(13/5,3/5) 
(4/5,2/5) .. controls +(3/5,0) and  +(-3/5,-.0/5) .. (16/5,-3/5);
%
%
\draw[knots] 
(11/5,2/5) .. controls +(-3/5,0) and +(3.5/5,0) .. (4/5,-3/5)
(11/5,3/5) .. controls +(-3.5/5,0) and +(3/5,0) .. (4/5,-2/5);
%
%
\draw[knots] 
(2/5,3/5) .. controls +(0,-2.5/5) and  +(-3/5,0) ..  (11/5,-2/5)
(-1/5,-3/5) .. controls +(2/5,0) and +(-3/5,0) .. (3.5/5,-1/5)  .. controls +(1.5/5,0) and +(-3/5,0) .. (11/5,-3/5);
%
\draw[knots] 
 (4/5,-3/5)
 .. controls +(-1.666/5,0) and +(0,-1.666/5) .. ++(-3/5,3/5)
 .. controls +(0,1.666/5) and +(-1.666/5,0) .. ++(3/5,3/5)
(4/5,-2/5)
 .. controls +(-1.111/5,0) and +(0,-1.111/5) .. ++(-2/5,2/5)
 .. controls +(0,1.111/5) and +(-1.111/5,0) .. ++(2/5,2/5);
 \draw[help lines](7.5/5,0) circle (9/5);
 \end{scope}
\end{tikzpicture} 
\end{center}
\caption{Type I Reidemeister move plus double figure-eight move} \label{fig:Reid+gamma}
\end{figure}

The local move in Fig.~\ref{fig:Reid+gamma} is the composition of a type I Reidemeister move and the move shown in Fig.~\ref{fig:gamma}.
This implies that $P_\beta$ is a $\gamma$-knot over~$P$.
Obviously, $D_\beta$ has $\crn(P)+1+16$ crossings. 
This means that $\crn(P_\beta)\le \crn(P)+17$.
The primeness of~$P_\beta$ follows by the same argument as in case~($\alpha$) because $P_\beta$ is a sum 
of a nontrivial cable tangle and the tangle $(B,t)$.
Thus, $P_\beta$ is a prime $\gamma$-knot over $P$ with $\crn(P_\beta)\le \crn(P)+17$, as required.
\end{proof}

Assertion~(iii) of Proposition~\ref{lem:main} readily follows from Proposition~\ref{as:iii-for-PT} by Proposition~\ref{prop:23impliesPT}.

\section{Addendum I: Strong property PT}
\label{sec:add-I}

In addition to weak property PT defined in Sec.~\ref{sec:proof-iii},
we introduce strong property~PT.

\subsection*{Definition. Strong property PT}
Let $D$ be a knot diagram on the $2$-sphere $S^2=\R^2\cup\{\infty\}$.
We say that a tangle $(B,t)$ is \emph{represented by a connected subdiagram of~$D$}
if there exists a $2$-disk $\delta\subset S^2$ such that the intersection $\delta\cap D$ is connected and the pair $(\delta, \delta\cap D)$, with information of under- and overcrossings inherited from~$D$, is a diagram of $(B,t)$.
We say that $D$ has \emph{strong property~PT} if every $2$-string tangle represented by a connected subdiagram of~$D$ is either prime or trivial.
We say that a knot has \emph{strong property~PT} if all of its minimal diagrams have strong property PT.

\begin{prop}\label{prop:strongPT}
1. Each minimal diagram of each $1$-regular prime knot has strong property~PT.
In~particular, each $1$-regular prime knot has strong property~PT.

2. Each minimal diagram with strong property~PT has weak property~PT.
In~particular, each knot with strong property~PT has weak property~PT.
\end{prop}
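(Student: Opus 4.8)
The plan is to prove both parts by directly unwinding the definitions of the two PT properties and invoking the $\lambda$-regularity hypotheses together with the combinatorial lemma already established.

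For Part~1, I would take a minimal diagram $D$ of a $1$-regular prime knot $P$ and show it has strong property~PT. Suppose for contradiction that some $2$-string tangle $(B,t)$ is represented by a connected subdiagram $(\delta,\delta\cap D)$ yet is \emph{locally knotted}. Then $B$ contains a ball $A$ with $(A,A\cap t)$ a nontrivial $1$-string tangle; let $L$ be its closure. As in the proof of Claim~\ref{clm:prime-or-trivial}, $L$ is a nontrivial factor of $P$, and since $P$ is prime, $L$ is equivalent to~$P$, so $\crn(L)=\crn(P)$. The key point is that $L$ is simultaneously a factor of a knot $K_i$ obtained as the closure of one component-tangle, and this $K_i$ is captured by a \emph{proper} subdiagram of the minimal diagram~$D$, whence $\crn(K_i)<\crn(P)$. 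Because $P$ is $1$-regular and $L\simeq P$ is a factor of $K_i$, we must have $\crn(K_i)\ge \crn(P)$, a contradiction. The main obstacle here is bookkeeping the crossing counts: I must argue carefully that the diagram of $K_i$ inherited from the connected subdiagram $\delta\cap D$ has strictly fewer than $\crn(P)$ crossings. This requires that $\delta\cap D$ be a \emph{proper} connected subdiagram, i.e.\ that $\delta\neq S^2$, which should follow from the tangle $(B,t)$ being locally knotted but nontrivial (a locally knotted tangle cannot occupy the whole sphere diagram of a prime knot without the complementary ears contributing at least one crossing). I expect this crossing-count strictness to be the delicate step.

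For Part~2, the implication ``strong PT $\Rightarrow$ weak PT'' is essentially a matter of exhibiting the right disk. Given a minimal diagram $D$ with strong property~PT, I would apply Lemma~\ref{lem:Berge-Coroll} to produce a pair of neighboring edges $I,J$ with large $\rho$-distance; as in the proof of Proposition~\ref{prop:23impliesPT}, the neighboring edges $I,J$ yield a disk $d\subset S^2$ meeting $D$ in two simple non-intersecting arcs with $\dd d$ transverse to $D$ in four points, so that the complementary disk $\delta=S^2\sm\inr(d)$ represents a $2$-string tangle $(B,t)$. The complementary intersection $\delta\cap D$ is connected (both edges and their linking arcs assemble into a single piece since $I$ and $J$ are neighbors and the tangle $(B,t)$ carries the rest of the knot). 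Because $D$ has strong property~PT, this connected-subdiagram tangle $(B,t)$ is automatically prime or trivial, which is exactly the third bullet in the definition of weak property~PT; the first two bullets hold by construction of~$d$. Hence $D$ has weak property~PT.

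The forward reference to ``in particular'' statements in both parts is immediate once the diagram-level claims are in hand: if \emph{all} minimal diagrams have the property then the knot does by definition, and in Part~2 a knot with strong~PT has all minimal diagrams strong~PT, each of which is then weak~PT, so the knot has weak~PT. I would write the proof so that Part~2 reuses the disk-construction already done in Proposition~\ref{prop:23impliesPT}, keeping the argument short; the genuinely new content is Part~1, where the interplay between $1$-regularity and the local-knottedness obstruction drives the contradiction.
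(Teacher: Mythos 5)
Your Part 1 has the same skeleton as the paper's proof (close the local knot to get $L$, use primeness and Schubert's unique factorization to get $L=P$, observe $P$ is then a factor of the closure $K_2$ of $(B,t_1)$, and contradict $1$-regularity), but the step you yourself flagged as delicate is resolved by the wrong mechanism. Properness of the subdiagram is automatic and useless: $\delta$ is a $2$-disk by the definition of ``represented by a connected subdiagram,'' so $\delta\neq S^2$ always, yet the complementary disk $d$ may meet $D$ with no crossings at all, in which case properness only yields $\crn(K_2)\le\crn(P)$ --- and with equality possible one cannot rule out $K_2=P$, which destroys the contradiction (a $1$-regular $P$ is trivially a factor of itself). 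The hypothesis you never invoked, namely \emph{connectedness} of $\delta\cap D$, is exactly what the paper uses: the projections $c_1$ and $c_2$ of the two strings of $t$ are each connected, so their union is connected only if they share a double point; hence there is at least one inter-string crossing, and this crossing is discarded when $(B,t_1)$ is closed by a simple arc in $d$. This gives $\crn(K_2)\le\crn(D_P)-1=\crn(P)-1$, forcing $K_2\neq P$, so $K_2$ is a composite knot with factor $P$ and smaller crossing number, contradicting $1$-regularity.

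Your Part 2 takes a genuinely different route from the paper --- via Lemma~\ref{lem:Berge-Coroll} and the disk construction of Proposition~\ref{prop:23impliesPT} --- and it founders on precisely the connectedness you assert parenthetically. For neighboring edges $I$ and $J$, connectedness of $\delta\cap D$ is equivalent to $c_1\cap c_2\neq\emptyset$, and in the proof of Claim~\ref{clm:2/3} that nonemptiness is derived from \emph{primeness} of the knot; but Part 2 of the proposition concerns minimal diagrams of arbitrary knots, and the claim fails there: in a reduced alternating (hence minimal) diagram of the granny knot, the two ``neck'' edges are neighbors, and cutting along a disk meeting them leaves two disjoint trefoil subdiagrams, so strong property~PT says nothing about the resulting (locally knotted!) tangle. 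The paper avoids this by a different choice of disk: take any crossing $x$ and a small disk $d$ near $x$ meeting $D$ in two arcs on two legs of $x$ (as on the left of Fig.~\ref{fig:gamma}); then since a minimal diagram has no cutpoint, the complementary subdiagram $\delta\cap D$ is connected (each of the two strings passes through $x$), and strong property~PT applies directly. Note also that the crossing-free diagram must be handled separately, as the paper does, since Lemma~\ref{lem:Berge-Coroll} requires $n>0$; and the $\rho$-distance estimate you import is irrelevant to Part 2 --- all that is needed is one disk whose complementary subdiagram is provably connected, which is what the crossing-neighborhood choice delivers.
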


\begin{proof}
1. Assume to the contrary that a non-prime non-trivial $2$-string tangle $(B,t)$
is represented by a connected subdiagram $\delta\cap D_P$ in a minimal diagram $D_P$ of a $1$-regular prime knot~$P$.
This implies in particular that $(B,t)$ is locally knotted, that is, 
$B$ contains a ball $B'$ such that $(B',B'\cap t)$ is a nontrivial $1$-string tangle.
Let $K_1$ denote the knot obtained by the closure of $(B',B'\cap t)$.
Then $K_1$ is a factor of~$P$, which is a prime knot, so that we have $K_1=P$.
(This follows by the Unique Factorization Theorem by Schubert~\cite{Schu49}.)
On the other hand, the knot~$K_1=P$ is a factor of the knot $K_2$ obtained as the closure of~$(B,t_1)$, where $t_1$ is the component of $t$ that meets~$B'$. 
Observe that we have $\crn(K_2)\le \crn(P) - 1$ because, 
since the diagram $\delta\cap D_P$ representing $(B,t)$ is connected, 
the projection of $t_1$ has at least one crossing with the projection of the second component of~$t$.
The inequality $\crn(K_2)\le \crn(P) - 1$ implies that $K_2\neq P$.
Therefore, $K_2$ is a composite knot, $P$ is a factor of~$K_2$, and $\crn(K_2)\le \crn(P) - 1$.
This contradicts the assumption that $P$ is a $1$-regular knot.

2. Let $D$ be a minimal diagram with strong property~PT. 
If $D$ is a circle with no double points then $D$ has weak property~PT (obvious).
Assume that $D$ has double points. 
We take a double point $x$ of~$D$ and consider a disk $d\subset S^2$ in a small neighborhood of~$x$ such that the intersection $d\cap D$ consists of two non-intersecting arcs (as on the left side of Fig.~\ref{fig:gamma}).
Since $D$ is a minimal diagram, 
$x$ is not a cutpoint of~$D$.
This easily implies that the intersection $\delta \cap D$, where  $\delta:=S^2\sm \inr(d)$,
is connected.
Since $D$ has strong property~PT, it follows that the $2$-string tangle represented by the connected subdiagram $\delta \cap D$ is either prime or trivial.
This means that $D$ has weak property~PT.
\end{proof}

Propositions~\ref{prop:strongPT} and~\ref{prop:23impliesPT}
give the following dependence for properties of prime knots.
 
\begin{center}
\begin{tikzpicture}
  \matrix[row sep=0mm,column sep=2mm] {
     \node {$1$-regularity}; & \node {$\Longrightarrow$}; & \node {$\frac23$-regularity}; \\
    \node [rotate=-90]{$\Rightarrow$}; &  & \node [rotate=-90]{$\Rightarrow$}; \\
      \node {strong property PT}; & \node {$\Longrightarrow$}; & \node {weak property PT}; \\
  };
\end{tikzpicture}
\end{center}

This implications can be treated in terms of conjectures.
We consider the following conjectures.

\begin{conj}\label{con:SPT}
Each prime knot has strong property PT.
\end{conj}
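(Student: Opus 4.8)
The plan is to reduce Conjecture~\ref{con:SPT} to a crossing-number inequality and then to observe that the residual inequality is itself one of the long-standing additivity-type problems, so that a fully unconditional proof is not presently within reach. The cleanest conditional route is via Proposition~\ref{prop:strongPT}(1): since every $1$-regular prime knot already has strong property~PT, it suffices to prove that every prime knot is $1$-regular, i.e.\ Conjecture~\ref{con:com}. In particular, for the classes where $1$-regularity is known unconditionally---alternating knots (via~\cite{Mur87,Kau87,Thi87}) and torus knots (via~\cite{Di04})---Conjecture~\ref{con:SPT} holds on the nose, and any future progress on Conjecture~\ref{con:com} transfers verbatim.

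For a direct attack I would argue by contradiction, following the proof of Proposition~\ref{prop:strongPT}(1) but attempting to replace its appeal to $1$-regularity by an independent crossing-number estimate. Suppose some minimal diagram $D_P$ of a prime knot~$P$ fails strong property~PT. Then a connected subdiagram $\delta\cap D_P$ represents a $2$-string tangle $(B,t)$ that is neither prime nor trivial, hence locally knotted: there is a ball $B'\subset B$ for which $(B',B'\cap t)$ is a nontrivial $1$-string tangle whose closure $K_1$ is a factor of~$P$. Primeness of~$P$ (with Schubert's Unique Factorization Theorem~\cite{Schu49}) forces $K_1=P$. Writing $t_1$ for the component of~$t$ meeting~$B'$ and $K_2$ for the closure of $(B,t_1)$, the connectedness of $\delta\cap D_P$ yields $\crn(K_2)\le\crn(P)-1$, while the composite knot $K_2$ has $P$ as a factor.

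The main obstacle is exactly this last inequality. To close the contradiction one must rule out a composite knot $K_2$ admitting $P$ as a factor yet satisfying $\crn(K_2)<\crn(P)$; equivalently one must prove $\crn(K_2)\ge\crn(P)$ for this specific configuration. This is precisely the assertion of Conjecture~\ref{con:com} (and follows from either of Conjectures~\ref{con:add} and~\ref{con:sat}), so the step is as hard as the additivity problem itself and is where any genuinely new idea would have to intervene. Absent such an idea I would present Conjecture~\ref{con:SPT} as open, recording only its place in the hierarchy of Proposition~\ref{prop:strongPT}: it is implied by $1$-regularity and in turn implies weak property~PT, and hence---through Propositions~\ref{prop:23impliesPT} and~\ref{as:iii-for-PT}---it feeds the same prime-$\gamma$-knot construction that drives the main theorem.
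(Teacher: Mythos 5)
Your assessment is correct and coincides with the paper's own treatment: the statement is Conjecture~\ref{con:SPT}, which the paper never proves but only places in the implication hierarchy, deriving it from $1$-regularity (Conjecture~\ref{con:com}) via assertion~1 of Proposition~\ref{prop:strongPT} --- exactly the reduction you give, with the same sticking point that the residual inequality $\crn(K_2)\ge\crn(P)$ is the additivity-type problem itself. Note moreover that by Theorem~\ref{T2} an unconditional proof of Conjecture~\ref{con:SPT} would refute Conjecture~\ref{H1}, so leaving it conditional, as you and the paper both do, is the only tenable position.
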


\begin{conj}\label{con:WPT}
Each prime knot has weak property PT.
\end{conj}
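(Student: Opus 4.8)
The plan is to prove Conjecture~\ref{con:WPT} by exhibiting, for an arbitrary prime knot $P$, a single minimal diagram with weak property PT, following the template already set up in the proof of Proposition~\ref{prop:23impliesPT} but with the regularity hypothesis removed. First I would fix a minimal diagram $D_P$ of $P$ on $S^2$ with $n=\crn(P)$ crossings and apply the combinatorial Lemma~\ref{lem:Berge-Coroll} to obtain a pair of neighboring edges $I$ and $J$ with $\rho(I,J)\ge 2n/3$. Since $I$ and $J$ are neighbors they co-bound a complementary region, so there is a disk $d\subset S^2$ meeting $D_P$ transversely in four points with $d\cap D_P$ equal to two disjoint subarcs of $I$ and $J$; the complementary disk $\delta:=S^2\sm\inr(d)$ then carries a $2$-string tangle $(B,t)$. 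Every requirement in the definition of weak property PT is then met except one: I must show that $(B,t)$ is not locally knotted.

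Ruling out local knottedness is where the whole difficulty concentrates. Suppose $(B,t)$ contained a ball $A$ with $(A,A\cap t)$ a nontrivial $1$-string tangle, sitting on a component $t_i$ of $t$, and let $L$ be the closure of $(A,A\cap t)$. Because $D_P$ is a diagram of the prime knot $P$, the knot $L$ is a connected-sum factor of $P$, so primeness and nontriviality force $L=P$ and hence $\crn(L)=n$. On the other hand $L$ is also a factor of $K_i$, the closure of the $1$-string tangle $(B,t_i)$, and the purely combinatorial estimate established within the proof of Proposition~\ref{prop:23impliesPT} (Claim~\ref{clm:2/3}), which uses only $\rho(I,J)\ge 2n/3$ together with the absence of cutpoints in a minimal prime diagram and \emph{not} any regularity hypothesis, gives $\crn(K_i)\le \frac23 n-1<n$. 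Thus a local knot would make the prime knot $P$ a connected-sum factor of a knot $K_i$ whose crossing number is strictly smaller than $\crn(P)$.

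Consequently the content of Conjecture~\ref{con:WPT} is precisely the exclusion of this configuration, and the main obstacle is that excluding it is a crossing-number monotonicity statement of exactly the kind that remains open. The clean way to finish is to invoke $\frac23$-regularity: if $P$ is $\frac23$-regular then $\crn(K_i)\ge\frac23\crn(L)=\frac23 n$, contradicting $\crn(K_i)\le\frac23 n-1$, which is the argument of Proposition~\ref{prop:23impliesPT}. Hence Conjecture~\ref{con:2/3} implies Conjecture~\ref{con:WPT}, and a fortiori so does $1$-regularity (Conjecture~\ref{con:com}); the realistic route to Conjecture~\ref{con:WPT} is therefore to establish one of these stronger conjectures. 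An unconditional proof, by contrast, would need a lower bound forcing $\crn(K_i)\ge n$ whenever a prime knot coincides with one of its own factors, and the best available unconditional result, Lackenby's $\frac1{152}$-regularity \cite{La09}, is far too weak to beat the bound $\crn(K_i)\le\frac23 n-1$. This gap --- proving that a prime knot cannot be a factor of a knot with substantially fewer crossings --- is exactly the hard point, and it is the reason the statement is posed as a conjecture rather than proved outright.
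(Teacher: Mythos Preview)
The statement you are addressing is Conjecture~\ref{con:WPT}, and the paper does not prove it; it is listed precisely as an open conjecture. There is therefore no ``paper's own proof'' to compare your proposal against. Your write-up is not in fact a proof but an accurate diagnosis of why the statement remains conjectural, and on that level it is entirely correct and aligned with the paper's viewpoint.

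Specifically, your reduction via Lemma~\ref{lem:Berge-Coroll} and the disk $d$ is exactly the mechanism of Proposition~\ref{prop:23impliesPT}, and you correctly isolate the single missing ingredient: ruling out local knottedness of $(B,t)$ forces the inequality $\crn(K_i)\ge\crn(P)$ (or at least $\ge\tfrac23\crn(P)$) for a knot $K_i$ having $P$ as a factor, which is precisely $\tfrac23$-regularity. Your observation that Claim~\ref{clm:2/3} uses only $\rho(I,J)\ge 2n/3$ and minimality, not regularity, is right, and your conclusion that Conjecture~\ref{con:2/3} $\Rightarrow$ Conjecture~\ref{con:WPT} reproduces the implication diagram the paper states just before Theorem~\ref{T2}. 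Likewise your remark that Lackenby's $\tfrac{1}{152}$ bound is far too weak to close the gap is the honest state of affairs.

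So: there is no error in your reasoning, but you should not label this a proof. What you have written is a correct explanation of the conditional implication Conjecture~\ref{con:2/3} $\Rightarrow$ Conjecture~\ref{con:WPT} together with a clear identification of the obstruction to an unconditional argument---which is exactly how the paper treats the matter.
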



\begin{conj}\label{conj-weak-PT}
There exist $\varepsilon>0$ and $N>0$ such that, for all $n>N$, the percentage of knots with weak property PT amongst all of the 
hyperbolic knots of~$n$ or fewer crossings is at least~$\varepsilon$.
\end{conj}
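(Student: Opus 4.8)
The plan is to prove Conjecture~\ref{conj-weak-PT} by exhibiting, for every sufficiently large~$n$, an explicit family of hyperbolic knots of at most~$n$ crossings, of size at least a fixed fraction of $H_n$, each member of which \emph{provably} has weak property~PT. The reduction that makes this feasible is already available in the paper: by Proposition~\ref{prop:23impliesPT} every $\frac23$-regular prime knot has weak property~PT, and by Proposition~\ref{prop:strongPT} the a~priori stronger hypothesis of $1$-regularity yields weak property~PT as well. Thus it suffices to locate a positive-density subfamily of the hyperbolic knots on which $\frac23$-regularity is a theorem rather than a conjecture, and then invoke Proposition~\ref{prop:23impliesPT} member-by-member.

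First I would assemble such a family from classes in which the crossing number is certified by a connected-sum-additive lower bound. The cleanest instance is the class of prime alternating knots: these are $1$-regular (the Kauffman--Murasugi--Thistlethwaite estimates give $\span V_K(t)=\crn(K)$ for alternating~$K$ together with additivity of $\span V_K(t)$, exactly as in the footnote to the Introduction), and by Menasco's theorem the non-torus ones are hyperbolic~\cite{Men84}. More generally I would admit every knot whose minimal crossing number is detected by a connected-sum-additive invariant, since for such a prime knot~$P$ the relation $P\mid K$ forces $\crn(K)\ge\crn(P)\ge\frac23\crn(P)$, i.e.\ $\frac23$-regularity. For each admitted class I would check hyperbolicity and $\frac23$-regularity and then conclude weak property~PT from Proposition~\ref{prop:23impliesPT}. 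This already produces a genuine infinite supply of hyperbolic knots with weak property~PT.

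The step I expect to be the main obstacle is the \emph{counting}: one must show that the assembled family contains at least $\varepsilon H_n$ knots of at most $n$ crossings for some fixed $\varepsilon>0$. Two difficulties compound here. First, unconditional lower bounds on the number $h_n$ of hyperbolic knots are weak; the exponential lower bounds on $p_n$ of Ernst--Sumners~\cite{ES87} and Welsh~\cite{Wel92} count \emph{all} prime knots, and isolating the hyperbolic ones without circularity is itself a genericity statement of the type at issue (indeed Conjecture~\ref{H1}). Second, the classes on which $\frac23$-regularity is currently a theorem---alternating, and more generally span-detected knots---are expected to be exponentially rare among all prime knots, their exponential growth rate being strictly smaller than that of~$p_n$, so they cannot by themselves supply positive density and must be supplemented.

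The natural way to supplement them is a direct, diagram-theoretic argument: show that for a positive proportion of minimal diagrams one can place a four-point disk~$d$ whose complementary $2$-string tangle $(B,t)$ is demonstrably not locally knotted, thereby verifying weak property~PT with no regularity hypothesis. This is where the obstruction appears in its sharpest form. Following the analysis in the proof of Proposition~\ref{prop:23impliesPT}, any local knot of $(B,t)$ in a minimal diagram of a prime knot~$P$ must be~$P$ itself and must arise as a connected-sum factor of the closure $K_i$ of one strand, a knot with $\crn(K_i)<\crn(P)$. Excluding this in a positive proportion of cases is precisely the assertion that the crossing number does not drop under passage to a connected-sum factor---the content of Conjectures~\ref{con:add}--\ref{con:com}. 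Consequently the density step cannot be carried out by the present methods without essentially resolving crossing-number monotonicity, and Lackenby's unconditional $\frac1{152}$-regularity~\cite{La09} is far too weak to replace the missing $\frac23$-regularity here; I therefore expect Conjecture~\ref{conj-weak-PT} to resist a direct proof until such monotonicity progress is made.
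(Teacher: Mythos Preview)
There is nothing to compare against: the paper does \emph{not} prove this statement. It is stated as a \emph{conjecture} (Conjecture~\ref{conj-weak-PT}), not as a theorem, and the paper makes no attempt to establish it. Its role in the paper is purely hypothetical: Theorem~\ref{T2} shows that Conjecture~\ref{conj-weak-PT} is \emph{incompatible} with Conjecture~\ref{H1}. In particular, an unconditional proof of Conjecture~\ref{conj-weak-PT} would, via Theorem~\ref{T2}, refute the hyperbolic-genericity conjecture outright---a major result that the paper neither claims nor achieves.

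Your analysis of the obstructions is sound and your conclusion---that the density step collapses back onto crossing-number monotonicity (Conjectures~\ref{con:add}--\ref{con:com}) and hence resists current methods---is exactly right. But you have misread the task: you were not expected to prove Conjecture~\ref{conj-weak-PT}, because the paper does not. The correct response here is simply to observe that the statement is a conjecture with no accompanying proof, and that the paper's contribution is the implication network (Conjectures~\ref{con:add}--\ref{con:WPT} $\Rightarrow$ Conjecture~\ref{conj-weak-PT}, via Propositions~\ref{prop:23impliesPT} and~\ref{prop:strongPT}) together with the contradiction with Conjecture~\ref{H1}.
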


We have the following implications.

\begin{center}
\begin{tikzpicture} 
\draw (0,0) node[right]{Conj.~4 $~\Longrightarrow~$ Conj.~5 $~\Longrightarrow~$ Conj.~6};
\draw (120:1.2cm) node [left]{Conj.~2};
\draw (120:0.6cm) node [rotate=-45]{$\Longrightarrow$};
\draw (-120:1.2cm) node [left]{Conj.~3};
\draw (-120:0.6cm) node [rotate=45]{$\Longrightarrow$};
\draw ++(-120:1.2cm) +(2.3cm,0) node[right]{Conj.~7 $~\Longrightarrow~$ Conj.~8 $~\Longrightarrow~$ Conj.~9};
\draw ++(-120:0.6cm) 
++(1.8cm,0) node [rotate=-45]{$\Longrightarrow$} 
++(2.2cm,0) node [rotate=-45]{$\Longrightarrow$}
++(2.2cm,0) node [rotate=-45]{$\Longrightarrow$};
\end{tikzpicture} 
\end{center}

The implication 
Conj.~\ref{con:com} $\Rightarrow$ Conj.~\ref{con:SPT} 
follows from assertion~1 of Proposition~\ref{prop:strongPT}.
The implication 
Conj.~\ref{con:SPT} $\Rightarrow$ Conj.~\ref{con:WPT} 
follows from assertion~2 of Proposition~\ref{prop:strongPT}.
The implications Conj.~\ref{con:2/3}  $\Rightarrow$ Conj.~\ref{con:WPT} 
and 
Conj.~\ref{conj-weak}  $\Rightarrow$ Conj.~\ref{conj-weak-PT} 
follow from Proposition~\ref{prop:23impliesPT}.
The implication Conj.~\ref{con:WPT} $\Rightarrow$ Conj.~\ref{conj-weak-PT}  is obvious.

\medskip

Theorem~\ref{T1} can be strengthened in the following way.

\begin{thm}\label{T2}
Conjecture~\ref{H1} contradicts (each of) Conjectures
\ref{con:add}--\ref{conj-weak-PT}.
\end{thm}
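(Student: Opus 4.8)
The plan is to reduce Theorem~\ref{T2} to the already-established machinery of Theorem~\ref{T1} rather than to redo the entire argument. As with Theorem~\ref{T1}, the implications diagram just displayed shows that each of Conjectures~\ref{con:add}--\ref{con:WPT} implies Conjecture~\ref{conj-weak-PT}, so it suffices to prove only that Conjecture~\ref{H1} is incompatible with the single weakest statement, namely Conjecture~\ref{conj-weak-PT}. First I would record this reduction explicitly: every arrow in the enlarged diagram was justified in the paragraph following it (using Propositions~\ref{prop:strongPT} and~\ref{prop:23impliesPT}), and since Conjecture~\ref{conj-weak-PT} sits at the terminal position of both the $2/3$-chain and the PT-chain, incompatibility of Conjectures~\ref{H1} and~\ref{conj-weak-PT} automatically yields incompatibility of Conjecture~\ref{H1} with each of Conjectures~\ref{con:add}--\ref{conj-weak-PT}.

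Next I would mimic the counting argument from the proof that Proposition~\ref{lem:main} implies Theorem~\ref{T1}, but starting from \emph{weak property PT} in place of $\frac23$-regularity. The key observation is that Proposition~\ref{as:iii-for-PT} is already phrased for knots with weak property PT: if $P$ has weak property PT, then there is a \emph{prime} $\gamma$-knot $P'$ over $P$ with $\crn(P')\le\crn(P)+17$. Combined with assertions~(i) and~(ii) of Proposition~\ref{lem:main}, this gives exactly the inequality machinery needed. Concretely, if Conjecture~\ref{conj-weak-PT} holds, then there are $\varepsilon_0>0$ and $N_0>0$ such that for all $n>N_0$ the number of hyperbolic knots of at most $n$ crossings that possess weak property PT is at least $\varepsilon_0 H_n$. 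Since distinct such hyperbolic knots are non-satellite and Proposition~\ref{lem:main}(ii) makes the $\gamma$-knot sets over them disjoint, while Proposition~\ref{lem:main}(i) makes each $\gamma$-knot a satellite knot, I obtain the same bound $S_{n+17}\ge\varepsilon_0 H_n$ as before. From here the proof is verbatim identical to the proof in Section~\ref{sec:L=>T}: one derives $P_{n+17}\ge H_{n+17}+\varepsilon_0 H_n$, passes to the inequality in the spirit of~\eqref{eq:PHPH}, and concludes under Conjecture~\ref{H1} that $P_n^{1/n}\to+\infty$, contradicting the Welsh bound $\limsup p_n^{1/n}<+\infty$ from~\cite{Wel92}.

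The only point requiring genuine care, and the one I expect to be the main obstacle, is verifying that Proposition~\ref{as:iii-for-PT} really does apply to \emph{hyperbolic} knots with weak property PT and not merely to $\frac23$-regular prime knots. In the proof of Theorem~\ref{T1}, weak property PT entered only through Proposition~\ref{prop:23impliesPT}, which produced it from $\frac23$-regularity; here weak property PT is hypothesised directly, so I must confirm that the construction of $P'$ in Proposition~\ref{as:iii-for-PT} uses only the weak-PT hypothesis (it does) and produces a prime $\gamma$-knot regardless of whether $P$ was assumed $\frac23$-regular. I would therefore check that every hyperbolic knot counted by Conjecture~\ref{conj-weak-PT} is a legitimate base knot for the $\gamma$-construction, i.e.\ that it is non-satellite (automatic, since hyperbolic knots are neither torus nor satellite) so that Proposition~\ref{lem:main}(ii) applies and the disjointness of the $\gamma$-knot families is preserved. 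Once this bookkeeping is in place, no new geometric input is needed, and the contradiction with~\eqref{eq:Pinfty} closes the argument exactly as in Section~\ref{sec:L=>T}.

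Finally, I would remark that this establishes Theorem~\ref{T2} as a strict strengthening of Theorem~\ref{T1}, since Conjectures~\ref{con:SPT}--\ref{conj-weak-PT} are logically weaker than (or implied by) the conjectures appearing in Theorem~\ref{T1}, and the added conjectures about property PT are genuinely new hypotheses that the same incompatibility now rules out in the presence of Conjecture~\ref{H1}.
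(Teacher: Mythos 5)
Your proposal is correct and takes essentially the same route as the paper: the paper's own proof likewise uses the implication diagram to reduce Theorem~\ref{T2} to the incompatibility of Conjectures~\ref{H1} and~\ref{conj-weak-PT}, and then repeats the counting argument of Section~\ref{sec:L=>T} verbatim with $\frac23$-regularity replaced by weak property~PT and assertion~(iii) of Proposition~\ref{lem:main} replaced by Proposition~\ref{as:iii-for-PT}. Your additional bookkeeping check (that the hyperbolic knots counted by Conjecture~\ref{conj-weak-PT} are non-satellite, so that Proposition~\ref{lem:main}(ii) yields disjointness of the $\gamma$-knot families) is exactly the implicit step the paper relies on.
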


\begin{proof}
Since each of Conjectures 
\ref{con:add}--\ref{con:WPT} 
implies Conjecture~\ref{conj-weak-PT} 
(see the system of implications before Theorem~\ref{T2}), 
it suffices to show that Conjecture~\ref{H1} contradicts Conjecture~\ref{conj-weak-PT}.
In order to prove this, we repeat verbatim 
the reduction of Theorem~{\rm\ref{T1}} to Proposition~{\rm\ref{lem:main}}
up to replacing $\frac23$-regularity with weak property PT and assertion~(iii) of Proposition~\ref{lem:main} with Proposition~\ref{as:iii-for-PT}.
\end{proof}

\section{Addendum II: Non-$\frac14$-regular knots}
\label{sec:add-II}

The main theorem of the present paper states that Conjecture~\ref{H1} 
concerning predominance of hyperbolic knots
contradicts 
the conjecture on additivity of the crossing number (of knots under connected sum)
as well as several 
weaker conjectures.
In this section, we show that Conjecture~\ref{H1}  
also contradicts an assumption that 
the conjecture on additivity has 
many strong counterexamples.

We say that a knot $P$ is \emph{non-$\lambda$-regular}, $\lambda\in\mathbb{R}$,
if there exists a knot $K$ such that $P$ is a factor of $K$ while $\crn(K)< \lambda\cdot\crn(P)$.
In this section, we prove the following theorem.


\begin{thm}
\label{thm:14contradicts-conj-hyp}
If there exist $\varepsilon_0>0$ and $N_0>0$ such that, for all $n>N_0$, 
the number of non-$\frac14$-regular knots of~$n$ or fewer crossings 
is at least~$\varepsilon_0 H_n$, where $H_n$ is the number of hyperbolic knots of~$n$ or fewer crossings, 
then Conjecture~\ref{H1} does not hold.
\end{thm}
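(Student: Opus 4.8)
The plan is to reduce Theorem~\ref{thm:14contradicts-conj-hyp} to the same counting argument already used in the proof that Proposition~\ref{lem:main} implies Theorem~\ref{T1}. The essential observation is that a non-$\frac14$-regular knot $P$ is, by definition, a prime-or-composite knot admitting a factor-bearing witness $K$ of small crossing number; but what we really want is a \emph{prime} knot $P$ that is non-$\frac14$-regular, so that we can feed it into a $\gamma$-knot construction and produce \emph{many distinct prime satellite knots} of controlled crossing number. First I would check that each non-$\frac14$-regular knot gives rise to a prime non-$\frac14$-regular factor (by Schubert unique factorization, if $P$ is non-$\frac14$-regular then one of its prime factors is non-$\lambda$-regular for a comparable $\lambda$), so that without loss of generality we may restrict attention to prime knots.

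Next I would adapt Proposition~\ref{as:iii-for-PT}: the relevant structural input is that a non-$\frac14$-regular prime knot has a minimal diagram exhibiting a neighboring-edge pair producing a $2$-string tangle $(B,t)$ whose closures $K_1,K_2$ have crossing number bounded below $\frac14\crn(P)$, hence (by the non-regularity hypothesis combined with Claim~\ref{clm:prime-or-trivial}-style reasoning) the tangle is prime or trivial. This is where I would invoke a variant of weak property PT with the $\frac23$ threshold relaxed to $\frac14$; the argument of Lemma~\ref{lem:Berge-Coroll} already guarantees a neighboring pair with $\rho(I,J)\ge 2n/3$, which is comfortably larger than $\frac14\crn(P)$, so the local-knottedness obstruction is excluded exactly as in Proposition~\ref{prop:23impliesPT}. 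Therefore each non-$\frac14$-regular prime knot admits a prime $\gamma$-knot $P'$ over it with $\crn(P')\le\crn(P)+17$, and by assertion~(ii) of Proposition~\ref{lem:main} the $\gamma$-knots over distinct companions are distinct prime satellites.

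With this in hand the counting argument runs as before. Under the hypothesis, for $n>N_0$ there are at least $\varepsilon_0 H_n$ non-$\frac14$-regular knots of at most $n$ crossings, yielding (after passing to prime factors and then to $\gamma$-knots) at least $c\,\varepsilon_0 H_n$ distinct prime satellite knots of at most $n+17$ crossings for some constant $c>0$. This gives $S_{n+17}\ge c\,\varepsilon_0 H_n$, whence $P_{n+17}\ge H_{n+17}+c\,\varepsilon_0 H_n$, and the identical manipulation of \eqref{eq:PHPH} shows that Conjecture~\ref{H1} forces $P_n^{1/n}\to+\infty$, contradicting the exponential upper bound $\limsup p_n^{1/n}<+\infty$ from~\cite{Wel92}.

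The main obstacle I anticipate is the reduction from arbitrary non-$\frac14$-regular knots to \emph{prime} non-$\frac14$-regular knots: a non-regular knot might be composite, and one must argue that some prime factor inherits a usable non-regularity with a threshold still large enough to defeat local knottedness in the tangle (the gap between $2/3$, furnished by Lemma~\ref{lem:Berge-Coroll}, and $1/4$ gives real room here, but the bookkeeping of crossing numbers across connected summands needs care). A secondary technical point is ensuring that the multiplicity $c$ lost in passing from counting knots to counting their prime $\gamma$-knot descendants stays bounded away from zero, so that the inequality $S_{n+17}\ge c\,\varepsilon_0 H_n$ survives with a positive constant.
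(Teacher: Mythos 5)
Your proposal fails at its central step, and the failure is a direction-of-inequality error, not bookkeeping. Non-$\frac14$-regularity of $P$ is the \emph{negation} of a regularity property: it asserts the \emph{existence} of a single knot $K$ with factor $P$ and $\crn(K)<\frac14\crn(P)$, and it provides no lower bound on the crossing numbers of knots containing $P$ as a factor. But the exclusion of local knots in Proposition~\ref{prop:23impliesPT} (Claim~\ref{clm:prime-or-trivial}) runs entirely on such a lower bound: local knottedness forces $P$ to be a factor of the closure $K_i$ with $\crn(K_i)\le\frac23\crn(P)-1$, and it is precisely $\frac23$-regularity that makes this contradictory. For a non-$\frac14$-regular knot no contradiction arises --- indeed a knot of this kind is exactly one for which small-crossing-number knots containing it \emph{do} exist --- so your claim that ``the local-knottedness obstruction is excluded exactly as in Proposition~\ref{prop:23impliesPT}'' is backwards, and there is no route from the hypothesis of Theorem~\ref{thm:14contradicts-conj-hyp} to weak property~PT or to Proposition~\ref{as:iii-for-PT}. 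The remark that the gap between $2/3$ and $1/4$ ``gives real room'' rests on the same reversal. Your preliminary reduction is also unsound on its own terms: the map from a composite non-$\frac14$-regular knot to a prime factor has unbounded fibers (all knots $P\sharp Q$, $Q$ varying, share the factor $P$), and since $\crn(P_1\sharp\cdots\sharp P_k)$ may greatly exceed each $\crn(P_i)$, the inherited non-regularity threshold degrades with the number of factors; moreover Proposition~\ref{lem:main}(ii) requires the companions to be non-satellite knots, which non-$\frac14$-regular prime knots need not be.

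The paper's actual proof runs in the opposite direction and never touches $\gamma$-knots or property~PT: it exploits the \emph{witness} knots. Each $K\in\mathscr{M}_{\frac14}$ is sent to a composite knot $f(K)$ having $K$ as a factor with $\crn(f(K))<\frac14\crn(K)$; Lackenby's bound $\bigl(\crn(K_1)+\dots+\crn(K_m)\bigr)/152\le\crn(K_1\sharp\dots\sharp K_m)$ from~\cite{La09} shows every fiber of $f$ has fewer than $38$ elements, so the hypothesis yields at least $\frac{\varepsilon_0}{38}H_{4n}$ \emph{composite} knots of at most $n$ crossings. These are then converted injectively into prime satellite knots by taking two-strand cables $J_K$ with $\crn(J_K)\le4\crn(K)+1$ (primeness of cables by Schubert~\cite{Schu53}, injectivity by Lemma~\ref{lem:cables-are-distinct} via \cite{FW78} and the Gordon--Luecke theorem), giving $\frac{\varepsilon_0}{38}H_m<S_{m+4}$ for large $m$, after which the counting argument of Section~\ref{sec:idea} applies verbatim. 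Your final paragraph correctly identifies that counting template, but the bridge from the hypothesis to ``many distinct prime satellites'' must pass through the witness composite knots, Lackenby's theorem, and cabling --- not through the tangle-primeness machinery, which is structurally unavailable here.
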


\begin{proof}
Suppose that the assumption of the theorem holds true,
denote by $\mathscr{M}_{\frac14}$ the set of all non-$\frac14$-regular knots,
and let $f$ be a map with domain $\mathscr{M}_{\frac14}$ sending 
$K\in\mathscr{M}_{\frac14}$ to 
a composite knot $f(K)$ with factor $K$ such that  
\be
\label{eq:f-def}
\crn(f(K))~<~ \frac14 \crn(K).
\ee
Then the result of Lackenby~\cite{La09} stating that for any knots $K_1$, $\dots$, $K_n$ in the $3$-sphere we have
\be
\label{eq:Lackenby152}
\frac{\crn(K_1)+\dots+\crn(K_n)}{152}~\le~\crn(K_1\sharp\dots\sharp K_n)
\ee
implies that for each knot $L$ in the codomain~$f(\mathscr{M}_{\frac14})$ we have
\be
\label{eq:38}
\card(f^{-1}(L))~<~152/4=38.
\ee

Indeed, let $K$ be a knot with $f(K)=L$ 
having the smallest crossing number 
among the elements of $f^{-1}(L)$.  
Then~\eqref{eq:Lackenby152} implies that
\be
\label{eq:Lack-cor}
\frac{\card(f^{-1}(L))\crn(K)}{152}~\le~\crn(L).
\ee
Obviously, \eqref{eq:f-def} and~\eqref{eq:Lack-cor} imply~\eqref{eq:38}.

Since all of the knots in~$f(\mathscr{M}_{\frac14})$ are composite, 
it follows by~\eqref{eq:f-def} and~\eqref{eq:38}
that
for all $n\in\mathbb{N}$ we have
\be
\label{eq:composite-1}
\frac{\card\{K\in\mathscr{M}_{\frac14} : \crn(K) \le 4n\}}{38}~<~\card\{L\in f(\mathscr{M}_{\frac14}) : \crn(L) \le n\}~\le~C_n,
\ee
where $C_n$ is the number of composite knots of~$n$ or fewer crossings.
At the other hand, by the assumption of the theorem, for all $m>N_0$ we have
\be
\label{eq:composite-2}
H_m\varepsilon_0~\le~\card\{K\in\mathscr{M}_{\frac14} : \crn(K) \le m\}.
\ee
Then~\eqref{eq:composite-1} and~\eqref{eq:composite-2} imply that
for all $n> N_0/4$ we have
\be
\label{eq:composite-3}
\frac{\varepsilon_0}{38} H_{4n}~<~C_n.
\ee

Now, we observe that each knot~$K$ in the $3$-sphere obviously has a two-strand cable knot~$J_K$ with $\crn(J_K)\le 4\crn(K)+1$. 
Since a cable knot over a nontrivial knot is a prime satellite knot (see \cite[p. 250, Satz 4]{Schu53}, \cite[Cor.~2]{Gra91}),
while cable knots over distinct knots are distinct (Lemma~\ref{lem:cables-are-distinct} below), it follows by~\eqref{eq:composite-3} that 
for all $n> N_0/4$ we have
$$
\frac{\varepsilon_0}{38}H_{4n}~<~C_n~<~S_{4n+1},
$$
where $S_m$ denotes the 
number of all prime satellite knots of~$m$ or fewer crossings.
Consequently, since the sequences $(H_i)_{i\in\mathbb{N}}$ and $(S_i)_{i\in\mathbb{N}}$ are monotonically increasing, for all $m> N_0$ we have
$$
\frac{\varepsilon_0}{38}H_{m}~<~S_{m+4}.
$$
%
As is shown in Section~\ref{sec:idea} 
(see deduction of Theorem~{\rm\ref{T1}} from Proposition~{\rm\ref{lem:main}}),
conditions of this kind contradict Conjecture~\ref{H1}.
\end{proof}

\begin{lem}
\label{lem:cables-are-distinct}
Cable knots over distinct knots are distinct.
\end{lem}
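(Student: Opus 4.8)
The plan is to prove the equivalent statement that the companion knot is an invariant of a cable knot: if $J_{1}$ and $J_{2}$ are cable knots over companions $K_{1}$ and $K_{2}$ and $J_{1}$ is equivalent to $J_{2}$, then $K_{1}$ is equivalent to $K_{2}$; the lemma is the contrapositive. For $i\in\{1,2\}$ I would fix the companion solid torus $V_{i}$, a tubular neighbourhood of $K_{i}$ whose interior contains the cable $J_{i}$, and let $T_{i}=\dd V_{i}$ be the cabling torus. This torus splits the exterior of $J_{i}$ into the companion exterior $E_{K_{i}}=\clos(S^{3}\sm V_{i})$ and the \emph{cable space} $N_{i}=\clos\bigl(V_{i}\sm N(J_{i})\bigr)$, where $N(J_{i})$ is a tubular neighbourhood of $J_{i}$.

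First I would record the relevant geometric structure. Since a genuine cable has winding number at least $2$, hence nonzero, the torus $T_{i}$ is incompressible in the exterior of $J_{i}$ and is not boundary parallel (were it parallel to $\dd N(J_{i})$, the cable space $N_{i}$ would be a product $T^{2}\times I$ and $J_{i}$ a core of $V_{i}$); this incompressibility is exactly the fact already used in the proof of assertion~(i) of Proposition~\ref{lem:main} and in Claim~\ref{cl:Vi-incompressible}. Moreover $N_{i}$ is Seifert fibred over an annulus with a single exceptional fibre whose order equals the number of strands, and this Seifert piece contains the boundary torus $\dd N(J_{i})$ parallel to the knot itself.

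The key step is canonicity of the splitting. By the Jaco--Shalen--Johannson decomposition theorem the exterior of $J_{i}$ carries a system of essential tori, unique up to isotopy, cutting it into Seifert-fibred and atoroidal pieces; the cable space $N_{i}$ is the Seifert-fibred piece abutting $\dd N(J_{i})$, and its outer boundary component is $T_{i}$. Hence $T_{i}$, and with it the decomposition into $N_{i}$ and $E_{K_{i}}$, is determined up to isotopy by $J_{i}$ alone. An equivalence $J_{1}\to J_{2}$ therefore induces a homeomorphism of exteriors carrying the canonical decomposition of one onto that of the other, in particular $N_{1}$ onto $N_{2}$ and thus $E_{K_{1}}$ onto $E_{K_{2}}$. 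Since $E_{K_{1}}$ and $E_{K_{2}}$ are homeomorphic knot exteriors, the Gordon--Luecke theorem (a knot is determined by its complement) gives that $K_{1}$ and $K_{2}$ are equivalent, completing the argument.

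The main obstacle will be justifying the canonical determination of the cabling torus when the companion $K_{i}$ is itself a satellite, so that the exterior of $J_{i}$ contains many essential tori: one must argue that the \emph{outermost} cable-space piece adjacent to the knot is canonical and is never absorbed into a larger Seifert-fibred piece. The latter absorption could only occur if the two fibrations matched across $T_{i}$, which forces $J_{i}$ itself to be a torus knot and contradicts its being a nontrivial cable; nonetheless this is the point requiring genuine care, and it is precisely what the uniqueness part of the JSJ decomposition supplies. For the two-strand cables actually used in Theorem~\ref{thm:14contradicts-conj-hyp} one may alternatively appeal directly to the classical uniqueness of companions for cable knots established by Schubert~\cite{Schu53}.
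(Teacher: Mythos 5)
Your proof is correct in outline but takes a genuinely different route from the paper's, which disposes of the lemma in two lines: by Corollary~2 of Feustel--Whitten~\cite{FW78}, the \emph{group} of a cable knot $J(p,q;K)$ already determines $|p|$, $|q|$, and the homeomorphism type of the complement of~$K$, and Gordon--Luecke~\cite{GL89} then recovers~$K$ from its complement. Your JSJ argument re-proves the geometric content behind the Feustel--Whitten citation: the cabling torus is the outer boundary of the cable-space piece abutting $\dd N(J_i)$, hence canonical up to isotopy, so an equivalence of cables induces a homeomorphism of companion exteriors. This buys self-containedness (and works directly at the level of exteriors rather than groups) at the price of the absorption analysis; your fallback to Schubert's uniqueness of companions~\cite{Schu53} is also legitimate for the two-strand cables actually used in Theorem~\ref{thm:14contradicts-conj-hyp}.

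However, one step of your absorption analysis is wrong as stated, even though the fact you need is true. If the Seifert fibration of the cable space $N_i$ matched, across $T_i$, the fibration of an adjacent Seifert piece of $E_{K_i}$, the union would form a \emph{larger Seifert-fibred JSJ piece}, not necessarily the whole exterior of~$J_i$; so matching does not force $J_i$ to be a torus knot when $K_i$ is itself a satellite --- which is exactly the case you flagged as the main obstacle --- and ``uniqueness of JSJ'' alone does not exclude it, since if $T_i$ were absorbed it would simply fail to belong to the canonical family and your homeomorphism would no longer be obliged to respect it. The correct reason absorption never occurs is a slope comparison on~$T_i$: the regular fibre of the cable space on its outer boundary is the cable's curve $p\lambda+q\mu$ with $p\ge 2$ and $\gcd(p,q)=1$, a slope that is neither integral nor meridional in the peripheral coordinates of~$K_i$, whereas any Seifert piece of a knot exterior incident to its peripheral torus is a torus-knot exterior, a cable space, or a composing space (see~\cite{Bud06}), whose regular fibre there has integral slope $rs$ or is the meridian. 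Hence the fibrations can never be matched, $T_i$ is always a canonical torus, and with that repair the rest of your argument goes through.
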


\begin{proof}
By Corollary~2 of~\cite{FW78}, the group of a cable knot $J(p, q; K)$ determines the numbers $|p|$ and $|q|$ and the topological type of $K$'s complement. 
By the Gordon--Luecke theorem~\cite{GL89}, the knot complement determines the knot.
\end{proof}

\section{Addendum III: Weak property~PT and unknotting numbers}
\label{sec:add-III}

This section deals with a relation between weak property~PT and 
the unknotting number of knots.
The unknotting number of a knot~$K$ is denoted by $u(K)$.

\subsection*{Definitions}
Let us say that a knot $P$ is \emph{weakly U-regular} if we have $u(P)\le u(K)$ whenever $P$ is a factor of a knot~$K$.
We say that a knot~$P$ is \emph{strictly U-regular} if we have $u(P)< u(K)$ whenever $P$ is a factor of a knot~$K\neq P$.
We say that a knot~$P$ has \emph{weak BJ-property} if 
by altering one of the crossings in a minimal diagram of~$P$ we obtain a knot $J\neq P$ with $u(J)\le u(P)$. 
We say that a knot~$P$ has \emph{strict BJ-property} if 
by altering one of the crossings in a minimal diagram of~$P$ we obtain a knot $J$ with $u(J)<u(P)$. 

\subsection*{Remarks}
1. The conjecture that all knots are strictly U-regular is weaker than 
the old conjecture on additivity of the unknotting number of knots under connected sum
(see, e.\,g.,~\cite[p.\,61]{Ad94b}, \cite[Problem 1.69]{Kir97}).
At the moment, no counterexample seems to be known to the latter conjecture.
Thus, no examples of non-U-regular knots are known up to now.
The theorem of Scharlemann~\cite{Scha85} saying that unknotting number one knots are prime
(together with the Unique Factorization Theorem by Schubert~\cite{Schu49})
implies that 
all knots with unknotting number one are strictly U-regular, while
all knots with unknotting number two are weakly U-regular.


2. The so-called Bernhard--Jablan conjecture (see \cite{Be94}, \cite{Ja98}, and \cite{JS07}) is equivalent to the conjecture that all knots have strict BJ-property.
Kohn's conjecture \cite[Conjecture~12]{Koh91} (which can be viewed as a particular case of the Bernhard--Jablan conjecture) is equivalent to the conjecture that all knots with unknotting number one have strict BJ-property.
The set of knots with strict BJ-property contains the set of knots satisfying the Bernhard--Jablan conjecture. 
At the moment, no counterexample seems to be known to the Bernhard--Jablan conjecture.
Available results concerning unknotting number shows that many small knots and some specific classes of knots satisfy the Bernhard--Jablan conjecture, hence have strict BJ-property.
For example, results of~\cite{KrM93} and~\cite{Mur91} imply that all torus knots have strict BJ-property.
Results of McCoy~\cite{McC13} imply that alternating knots with unknotting number one have strict  BJ-property.

\begin{prop}
\label{prop:BJimpliesPT}
1. Each \underline{weakly} U-regular prime knot with \underline{strict} BJ-property has weak property~PT.

2. Each \underline{strictly} U-regular prime knot with \underline{weak} BJ-property has weak property~PT.
\end{prop}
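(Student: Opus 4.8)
The plan is to follow the template of the proof of Proposition~\ref{prop:23impliesPT}, replacing the crossing-number estimate supplied there by Lemma~\ref{lem:Berge-Coroll} with an argument driven by the crossing of the Bernhard--Jablan move. In both settings the target is identical: to exhibit, in a suitable minimal diagram of $P$, a $2$-disk $d$ meeting $D_P$ in two non-crossing arcs whose complementary tangle $(B,t)$ is not locally knotted; by the definition of weak property~PT this is exactly what must be produced.

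First I would fix the data coming from the hypotheses. By the (strict, resp.\ weak) BJ-property there is a minimal diagram $D_P$ of $P$ and a crossing $c$ of $D_P$ such that altering $c$ yields a knot $J$ with $u(J)<u(P)$ (part~1), resp.\ $J\neq P$ and $u(J)\le u(P)$ (part~2). I then place the PT-disk $d$ in the face of $D_P$ incident to the two \emph{outgoing} edges at $c$: a disk $d$ straddling short segments of these two edges, chosen so that $c\notin d$, meets $D_P$ in exactly two non-crossing arcs, so that $(\delta,\delta\cap D_P)$ with $\delta=S^2\sm\inr(d)$ represents a $2$-string tangle $(B,t)$ and $D_P$ has the configuration required for weak property~PT. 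The point of this particular choice is that the two passages of $P$ through $c$ land on \emph{different} strands of $(B,t)$; equivalently, $c$ is an inter-strand crossing of $(B,t)$, something I would confirm by a direct traversal of the knot (the cyclic order of edges around a transverse crossing places the two outgoing edges on opposite arcs of the cut-open circle).

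Now suppose, for contradiction, that $(B,t)$ is locally knotted, say along a ball $B'$ meeting $t$ in a nontrivial $1$-string sub-tangle carried by a single strand $t_i$. Closing $(B,t)$ trivially recovers $P$, so the local knot $L=\clos(B',B'\cap t)$ is a factor of $P$; as $P$ is prime and $L$ is nontrivial, $L=P$ (Schubert~\cite{Schu49}). The decisive step is that altering $c$ leaves $L$ unchanged: since $c$ is inter-strand it is not a self-crossing of $t_i$, so the knot type of the arc $B'\cap t\subset t_i$ is unaffected, and $B'$ still carries the local knot $L=P$ in the altered diagram. Hence $P$ is a factor of $J$. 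In part~1, weak U-regularity gives $u(P)\le u(J)$, which is incompatible with $u(J)<u(P)$ from strict BJ; in part~2 we have $J\neq P$, so strict U-regularity gives $u(P)<u(J)$, incompatible with $u(J)\le u(P)$ from weak BJ. In either case $(B,t)$ cannot be locally knotted, so it is prime or trivial, and $D_P$ witnesses weak property~PT.

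The main obstacle is precisely the inter-strand step: one must guarantee that the local knot never absorbs the crossing $c$, for otherwise altering $c$ could change $L$ and the implication ``$P$ is a factor of $J$'' would collapse. This is the reason for placing $d$ specifically in the face between the two outgoing edges at $c$, and I expect the careful verification that this face separates the two passages through $c$ onto the two strands to be the delicate part of the write-up. Finally, I would note that the asymmetric pairing of the hypotheses (weak U-regularity with strict BJ, strict U-regularity with weak BJ) is exactly what renders each resulting chain of inequalities strict on one side and non-strict on the other, so that the two contradictions close.
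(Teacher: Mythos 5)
Your argument is correct and runs on the same engine as the paper's proof (BJ crossing change producing $J$, Schubert's unique factorization forcing the local knot $L$ to equal $P$, then the U-regularity/BJ inequalities clashing), but you set up the tangle decomposition dually. The paper takes the disk $d$ \emph{containing} the crossing, with $d\cap D_P\cong\times$, so the crossing change happens entirely inside $d$, away from the tangle ball; the persistence of the local knot ($L=P$ is still a factor of $J$) is then automatic, and the two-arc configuration required by weak property PT is recovered afterwards by shrinking $d$ to a subdisk $d'$ meeting two legs of the $\times$, with the observation that this represents the same tangle $(B,t)$. You instead place $d$ on the two outgoing edges with $c$ excluded, so the weak-PT configuration is immediate, but the crossing now sits \emph{inside} the tangle, and your ``decisive step'' --- that altering the inter-strand crossing $c$ leaves the local knot carried by $B'$ unchanged because $c$ is not a self-crossing of $t_i$ --- is looser than it looks: the strand $t_i$ does pass through $c$, and the over- or under-passage of $t_i$ at $c$ could a priori lie inside the ball $B'$, so one cannot simply assert that the crossing-change ball is disjoint from $B'$. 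This is fixable, and the cleanest repair is exactly the paper's maneuver run in reverse: slide $d$ along the two edge segments to engulf $c$ (appending a single crossing along the boundary is a homeomorphism of pairs, so the tangle type and hence local knottedness are unchanged), after which the crossing change takes place outside the tangle ball and $L$ trivially survives into $J$. Note also that once the argument is routed this way, your inter-strand (out--out) placement --- which you correctly verify via the cyclic order of edge-ends at a crossing --- is no longer strictly needed; it is only your direct ``not a self-crossing of $t_i$'' formulation that requires it. Your treatment of the two asymmetric hypothesis pairings and the resulting strict/non-strict inequality clashes matches the paper exactly.
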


\begin{proof}
If $P$ is a weakly [resp., strictly] U-regular prime knot with strict [resp., weak] BJ-property, then
there exists a minimal diagram~$D_P$ of~$P$ (on the $2$-sphere $S^2=\R^2\cup\{\infty\}$) with a crossing~$X_1$ such that the change of the crossing yields a diagram of a knot~$J$ with $u(J)=u(P)-1$ [resp., a knot~$J\neq P$ with $u(J)\le u(P)$]. 
Let $d$ be a disk in~$S^2$ containing~$x_1$ 
such that the intersection $d \cap D_P$ is homeomorphic to~$\times$
while $\dd d$ intersects $D_P$ transversally in four points.
Let $\delta$ denote the disk $S^2\sm \inr(d)$, and let
$(B,t)$ be the $2$-string tangle represented by the diagram $\delta \cap D_P$.

We show that $(B,t)$ has no local knots.
Suppose on the contrary that $(B,t)$ is locally knotted, that is,
$B$ contains a ball $A$ such that the pair $(A,A\cap t)$ is a nontrivial $1$-string tangle.
We denote by~$L$ the knot that is the closure of the $1$-string tangle ${(A,A\cap t)}$.
Then $L$ is a factor of~$P$.
Since $P$ is prime and $L$ is nontrivial, it follows that $L$ and $P$ are equivalent.
At the same time, $L(=P)$ is a factor of~$J$.
Then we have $u(P)\le u(J)$ because $P$ is weakly U-regular
[resp., $u(P)< u(J)$ because $P$ is strictly U-regular while $J\neq P$].
However, $u(J)=u(P)-1$ [resp., $u(J)\le u(P)$]. 
The~obtained contradiction proves that $(B,t)$ has no local knots.

Now, we take a subdisk $d'$ in~$d$ such that the intersection $d' \cap D_P$ consists of 
two subarcs on two distinct legs of $\times=d \cap D_P$ 
(while $\dd d'$ intersects $D_P$ transversely in four points):

\begin{figure}[ht]
\begin{center}
\begin{tikzpicture} 
[knots/.style={white,double=black,very thick,double distance=1.6pt}]
\draw[knots](-0.9,-0.9)--(0.9,0.9);
\draw[knots](-0.9,0.9)--(0.9,-0.9);
\draw[help lines, line width=0.6pt](0,0) circle (5/5);
\draw[help lines, line width=0.6pt] (-45:4/5) arc (-45:45:4/5) arc (45:225:1/5) arc (45:-45:2/5) arc (135:315:1/5);
\draw (3/5,0) node {$d'$};
\end{tikzpicture} 
\end{center}
\end{figure}

Let $\delta'$ denote the disk $S^2\sm \inr(d')$.
Obviously, the diagram $\delta' \cap D_P$ represents the same 
$2$-string tangle $(B,t)$, which has no local knots.
Thus, the requirements from the definition of weak property~PT are fulfilled.
Consequently, $P$ has weak property~PT.
\end{proof}

\begin{cor}
\label{cor:BJcontradicts-1}
If there exist $\varepsilon>0$ and $N>0$ such that, for all $n>N$, the percentage of 
\underline{weakly} U-regular knots with \underline{strict} BJ-property
amongst all of the hyperbolic knots of~$n$ or fewer crossings is at least~$\varepsilon$, then
Conjecture~\ref{H1} does not hold.
\end{cor}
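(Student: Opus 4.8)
The plan is to recognise that the hypothesis of the corollary is merely a reformulation of Conjecture~\ref{conj-weak-PT}, and then to quote Theorem~\ref{T2}, which already establishes that Conjecture~\ref{H1} contradicts Conjecture~\ref{conj-weak-PT}. So the whole argument reduces to a short implication between two percentage conditions.

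First I would record the elementary observation that every hyperbolic knot is prime. As noted in the introduction, a composite knot is a satellite of each of its factors, so composite knots carry an essential torus and hence cannot be hyperbolic; thus the class of hyperbolic knots consists of prime knots only. This step is what allows Proposition~\ref{prop:BJimpliesPT}, stated for prime knots, to be applied to the hyperbolic knots under consideration.

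Next I would apply assertion~1 of Proposition~\ref{prop:BJimpliesPT}: each \emph{weakly} U-regular prime knot with \emph{strict} BJ-property has weak property~PT. Combined with the previous step, this shows that every weakly U-regular hyperbolic knot with strict BJ-property has weak property~PT. Consequently, for each~$n$ the hyperbolic knots of~$n$ or fewer crossings that are weakly U-regular and possess strict BJ-property form a subfamily of the hyperbolic knots of~$n$ or fewer crossings that have weak property~PT. Hence the percentage of knots with weak property~PT among the hyperbolic knots of~$n$ or fewer crossings is at least as large as the percentage appearing in the hypothesis, so the numbers $\varepsilon$ and $N$ furnished by the hypothesis witness Conjecture~\ref{conj-weak-PT}.

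Finally, Theorem~\ref{T2} asserts that Conjecture~\ref{H1} contradicts Conjecture~\ref{conj-weak-PT}. Since the hypothesis forces Conjecture~\ref{conj-weak-PT} to hold, Conjecture~\ref{H1} cannot hold, which is exactly the assertion of the corollary. I do not anticipate a genuine obstacle: the only substantive ingredients are the primeness of hyperbolic knots (so that Proposition~\ref{prop:BJimpliesPT} is applicable) and the matching of the two percentage conditions, while the quantitative core was already carried out in the deduction of Theorem~\ref{T1} from Proposition~\ref{lem:main} and reused in the proof of Theorem~\ref{T2}.
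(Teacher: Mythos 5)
Your proposal is correct and follows the paper's own argument exactly: apply assertion~1 of Proposition~\ref{prop:BJimpliesPT} to conclude that the hypothesis implies Conjecture~\ref{conj-weak-PT}, then invoke Theorem~\ref{T2}. Your explicit remark that hyperbolic knots are prime (so that Proposition~\ref{prop:BJimpliesPT} applies) is a small but legitimate detail the paper leaves implicit.
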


\begin{proof}
By Proposition~\ref{prop:BJimpliesPT}, 
the assumption of the corollary implies Conjecture~\ref{conj-weak-PT} 
(which concerns the set of knots having weak property~PT).
By Theorem~\ref{T2}, Conjecture~\ref{conj-weak-PT} contradicts Conjecture~\ref{H1}.
\end{proof}

\begin{cor}
\label{cor:BJcontradicts-2}
If there exist $\varepsilon>0$ and $N>0$ such that, for all $n>N$, the percentage of 
\underline{strictly} U-regular knots with \underline{weak} BJ-property
amongst all of the hyperbolic knots of~$n$ or fewer crossings is at least~$\varepsilon$, then
Conjecture~\ref{H1} does not hold.
\end{cor}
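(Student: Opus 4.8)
The plan is to reduce the corollary to Conjecture~\ref{conj-weak-PT} and then invoke Theorem~\ref{T2}, following almost verbatim the proof of Corollary~\ref{cor:BJcontradicts-1} but replacing the first assertion of Proposition~\ref{prop:BJimpliesPT} with the second. First I would note that every hyperbolic knot is prime: as remarked in the introduction, a composite knot is a satellite of each of its factors, and a satellite knot is never hyperbolic. Hence the percentage appearing in the hypothesis, although taken amongst hyperbolic knots, concerns \emph{prime} knots, so that Proposition~\ref{prop:BJimpliesPT} is applicable to them.

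Next I would apply assertion~2 of Proposition~\ref{prop:BJimpliesPT}, which guarantees that each strictly U-regular prime knot with weak BJ-property has weak property~PT. It follows that the set of strictly U-regular hyperbolic knots with weak BJ-property is contained in the set of hyperbolic knots with weak property~PT. Therefore, for each $n>N$ the percentage of knots with weak property~PT amongst all hyperbolic knots of~$n$ or fewer crossings is at least the corresponding percentage of strictly U-regular knots with weak BJ-property, which by hypothesis is at least~$\varepsilon$. This is exactly the statement of Conjecture~\ref{conj-weak-PT}.

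Finally, by Theorem~\ref{T2} the now-established Conjecture~\ref{conj-weak-PT} contradicts Conjecture~\ref{H1}, so the hypothesis of the corollary forces Conjecture~\ref{H1} to fail, as claimed.

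The argument is essentially formal once Proposition~\ref{prop:BJimpliesPT} and Theorem~\ref{T2} are in hand, so I do not anticipate any serious obstacle. The only point requiring a moment's care is the passage from \emph{hyperbolic} to \emph{prime}, needed so that the primeness hypothesis of Proposition~\ref{prop:BJimpliesPT} is met; all of the substantive work has already been carried out in the proof of that proposition (through the definitions of strict U-regularity and weak BJ-property) and in the counting argument behind Theorem~\ref{T2}.
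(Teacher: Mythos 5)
Your proof is correct and follows essentially the same route as the paper: the paper's proof of Corollary~\ref{cor:BJcontradicts-2} simply refers back to Corollary~\ref{cor:BJcontradicts-1}, whose proof is the same two-step reduction you give (assertion~2 of Proposition~\ref{prop:BJimpliesPT} to deduce Conjecture~\ref{conj-weak-PT}, then Theorem~\ref{T2}). Your explicit observation that hyperbolic knots are prime --- needed so that Proposition~\ref{prop:BJimpliesPT} applies --- is a point the paper leaves implicit, and it is correct.
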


\begin{proof}
See the proof of Corollary~\ref{cor:BJcontradicts-1}.
\end{proof}

\end{document}